\documentclass[12pt]{amsart}
\usepackage{esvect,amssymb,stmaryrd,wasysym,amscd,amsthm,verbatim,nicefrac,amsmath,color,fancyhdr,mathrsfs,braket,graphicx, turnstile,hyperref,indentfirst,csquotes,url,enumitem,amsfonts, mathtools,lipsum}
\usepackage[letterpaper, left=2.5cm, right=2.5cm, top=2.5cm,
bottom=2.5cm,dvips]{geometry}
\hypersetup{
    colorlinks=true,
    linkcolor=blue,
    filecolor=magenta,      
    urlcolor=cyan,
    citecolor=blue,
    pdftitle={Minimising the Peak to $p$-average ratio in the vectorial case},
    pdfpagemode=FullScreen,
    }

\numberwithin{equation}{subsection}
\let\oldsection\section
\renewcommand{\section}{
  \renewcommand{\theequation}{\thesection.\arabic{equation}}
  \oldsection}
\let\oldsubsection\subsection
\renewcommand{\subsection}{
  \renewcommand{\theequation}{\thesubsection.\arabic{equation}}
  \oldsubsection}

\title[Semi-differentiability of generalised $\L^\infty$ envelopes]{Semi-differentiability of generalised $\L^\infty$ envelopes with applications to supremal functionals}

\author{Nikos Katzourakis}

\address{Department of Mathematics and Statistics, University of Reading, Whiteknights Campus, Pepper Lane, Reading RG6 6AX, UNITED KINGDOM}

\email{n.katzourakis@reading.ac.uk}

\makeatletter
\@namedef{subjclassname@2020}{\textup{2020} Mathematics Subject Classification}
\makeatother

\subjclass[2020]{Primary 35J47; 35J60; Secondary 35D30; 35A15}

\keywords{Danskin theorem; Gateaux semi-differentiability; Hadamard semi-differentiability;  Penot semi-differentiability; $M$-semi-differentiability; Calculus of variations in $\mathrm L^{\infty}$; Supremal functionals; Level-convexity; Aronsson equation; $\infty$-Laplacian; Regularity theory; Optimisation; Envelope theorem}

\thanks{\!\!\!\!\!\!\!\texttt{The author has been partially financially supported through the EPSRC grant EP/X017109/1.}}

\def\R{\mathbb{R}}

\def\H{\mathrm{H}}
\def\E{\mathrm{E}}
\def\L{\mathrm{L}}

\def\D{\mathrm{D}}
\def\Om{\Omega}
\def\Argmax{\mathrm{Argmax}}

\def\X{\mathrm{X}}
\def\mX{\mathfrak{X}}
\def\F{\mathrm{F}}

\def\p{\mathrm{p}}

\def\c{\mathrm{c}}

\newcommand{\av}{-\hspace{-13pt}\displaystyle\int}

\def\weaklyconv{-\!\!\!\!\!-\!\!\!\!\rightharpoonup}

\def\weakstar{\,\overset{*_{\phantom{|}}}{{\smash{\weaklyconv }}\,}}

\newcommand{\bfX}{\mathbf{X}}

\newcommand{\C}{\mathrm{C}}

\renewcommand{\set}{\setminus}

\newcommand{\mL}{\mathcal{L}}
\newcommand{\mH}{\mathcal{H}}
\newcommand{\mF}{\mathcal{F}}

\newcommand{\N}{\mathbb{N}}

\newcommand{\K}{\mathcal{K}}

\newcommand{\mO}{\mathcal{O}}

\newcommand{\mD}{\mathcal{D}}
\newcommand{\mfD}{\mathfrak{D}}

\newcommand{\Argmin}{\mathrm{Argmin}}

\newcommand{\I}{\mathrm{I}}

\newcommand{\mB}{\mathbb{B}}
 
\newcommand{\noi}{\noindent}
\newcommand{\ms}{\medskip}

\newcommand{\al}{\alpha}
\newcommand{\be}{\beta}

\newcommand{\Ga}{\Gamma}
\newcommand{\de}{\delta}
\newcommand{\De}{\Delta}
\newcommand{\e}{\varepsilon}
\newcommand{\si}{\sigma}

\newcommand{\la}{\lambda}

\newcommand{\om}{\omega}

\newcommand{\ze}{\zeta}

\newcommand{\weak}{\, -\!\!\!\!\!-\!\!\!\!\rightharpoonup}

\newcommand{\larrow}{\longrightarrow}
\newcommand{\ot}{\otimes}

\renewcommand{\L}{\mathrm L}
\newcommand{\W}{\mathrm W}
\renewcommand{\p}{\partial}

\newcommand{\sub}{\subseteq}

\newcommand{\by}{\times}

\newcommand{\ess}{\mathrm{ess}}
\newcommand{\dist}{\mathrm{dist}}

\renewcommand{\div}{\mathrm{div}}
\newcommand{\supp}{\mathrm{supp}}

\newcommand{\beq}{\begin{equation}}

\newcommand{\eeq}{\end{equation}}

\newtheorem{theorem}{Theorem}
\newtheorem{corollary}[theorem]{Corollary}
\newtheorem{lemma}[theorem]{Lemma}
\newtheorem{proposition}[theorem]{Proposition}

\newtheorem{claim}[theorem]{Claim}
\newtheorem{example}[theorem]{Example}

\theoremstyle{definition}
\newtheorem{definition}[theorem]{Definition}
\newtheorem{remark}[theorem]{Remark}

\newcommand{\BPL}{\medskip \noindent \textbf{Proof of Lemma} }
\newcommand{\BPC}{\medskip \noindent \textbf{Proof of Claim} }

\newcommand{\BPP}{\medskip \noindent \textbf{Proof of Proposition} }
\newcommand{\BPT}{\medskip \noindent \textbf{Proof of Theorem} }

\begin{document}

\maketitle


\begin{abstract}
\!J.\,Danskin \cite{Da} proved in 1966 that the envelope of a family of continuous functions $\mathrm F : \mathbb R^n \times \mathrm K \longrightarrow \mathbb R$, parameterised by the points of a compact $\mathrm K \subseteq \mathbb R^m$, given by 
\[
f(u) := \max_{k \in \mathrm K} \mathrm F(u,k), \ \ \ \ f  \, : \, \mathbb R^n  \longrightarrow \mathbb R,
\]
is semi-differentiable on $\mathbb R^n$, if $\mathrm F$ is sufficiently regular. This result is of utmost importance in numerous applications, including game theory, economics, finance, and optimisation. Extensions have been proved towards almost every direction, but none permits to replace ``max over $\mathrm K$" with ``essential sup over $\mathrm K$". This is not a coincidence, as examples show that no direct generalisation can exist. By introducing some measure-theoretic apparatus, we establish the semi-differentiability of generalised $\mathrm L^\infty$ envelopes when $ \mathrm F$ is defined on the product of a Banach space with a measure space. As an application, we obtain a new foundational regularity result in the Calculus of Variations in $\mathrm L^\infty$, asserting that \emph{supremal functionals are semi-differentiable everywhere}, with an \emph{explicit formula for their semi-derivative}, despite generally being non-differentiable. Moreover, we establish a \emph{variational characterisation} of (absolute) minimisers of general level-convex  $\mathrm L^\infty$ functionals via their semi-differentials, revealing the \emph{genuine $\mathrm L^\infty$ counterpart of the Euler-Lagrange equations}. Conventional Aronsson equations and related divergence PDE involving measures, deducible from $\mathrm L^p$ approximations as $p\to \infty$, in general are \emph{not sufficient} for minimality in $\mathrm L^\infty$. Our results dislodge the trademark necessity for extrinsic $\mathrm L^p$-approximations as $p\to \infty$ to derive and study PDEs, reinterpreting the existing $\mathrm L^\infty$ theory. Most crucially, they provide a new powerful intrinsic $\mathrm L^\infty$ framework, evocative of the straightforward variational methods for integral functionals.
\end{abstract}

%
 
\tableofcontents

\section{Introduction}

\noi {\bf The context.} The starting point of this work is a problem first studied by J.\ Danskin in the 1960s \cite{Da}: given a continuous function $F : \R^n \by \K \larrow \R$, where $\K \sub \R^m$ is compact and $n,N\in \N$, consider the ``envelope" function $f : \R^n \larrow \R$ given by
\beq
\label{1.1}
f(u):= \max_{k\in \K} F(u,k). 
\eeq
Suppose $F$ is differentiable with respect to the first variable. When is the envelope $f$ differentiable? The non-linearity of the maximum operation implies that differentiability of $f$ fails even in the simplest cases. For example, let $F : \R \by [-1,1]\larrow \R$ be given by $F(u,k):=uk$. Then, 
\[
f(u)=\max_{k\in[-1,1]}uk=|u|, 
\]
and $f'(0)$ does not exist. However, as Danskin proved, the envelope $f$ is everywhere semi-differentiable in the Gateaux sense, and the following result can be established:

\smallskip

\noi {\bf Theorem}\,(Danskin 1966 \cite{Da}){\bf .} \emph{Let $F \in C(\R^n \by \K)$ be given, where $\K \sub \R^m$ is compact, and suppose the partial derivatives with respect to the first variable are also in $C(\R^n \by \K)$. Then, the envelope function \eqref{1.1} is Gateaux semi-differentiable everywhere on $\R^n$, and 
\beq
\label{1.2}
\left\{
\begin{split}
(\mfD f)^+_u(\phi) = \max_{k \in \K(u)} \big(\p F(\cdot, k)\big)_u(\phi),
\\
(\mfD f)^-_u(\phi) = \min_{k \in \K(u)} \big(\p F(\cdot, k)\big)_u(\phi),
\end{split}
\right.
\eeq
for all $u,\phi \in \R^n$.
}
\smallskip

\noi In the above, $\p F(\cdot, k) : \R^n \larrow (\R^n)^*$ is the partial differential with respect to the first variable, and $(\mfD f)^\pm_u(\phi)$ are the right/left semi-derivatives of $f$ at $u$ in the direction of $\phi$, namely
\[
(\mfD f)^\pm_u(\phi) := \lim_{t \to 0^\pm} \frac{f(u+t\phi)-f(u)}{t}, \ \ \ u\in\R^n.
\]
Also, $\K(u)$ is the \emph{``Argmax set" of $f$ as $u$}, namely the set of arguments realising the maximum in \eqref{1.1} (for further details on the notation see Section \ref{Section 2}):
\beq
\label{1.3}
\K(u) : = \Big\{ k\in \K : f(u)=F(u,k)\Big\}.
\eeq
Since $\K(u)$ generally is not a singleton set, the min/max in \eqref{1.2} may not coincide and therefore the right/left semi-derivatives may not match (e.g.\ in our example, $\K(0)=[-1,1]$). For the points $u$ for which they do and hence $\mH^0(\K(u))=1$, the envelope is Gateaux differentiable at this point. We note that $(\mfD f)^-_u(\phi)$ is actually superfluous and can be obtained from $(\mfD f)^+_u(\phi)$ with the direction reversal $\phi \leftrightarrow -\phi$, but we display it for emphatic purposes\footnote{Some authors, including Danskin himself, refer to the semi-derivative as just ``derivative", and the ``$+$" is not displayed even though the limit is only $1$-sided as $t\to0^+$, which adds to the confusion in the literature.}.

\smallskip

\noi {\bf The significance.} The above result, known as ``Danskin's theorem", is an all-important result in both theory and applications of mathematics. Danskin himself, who was a US Navy researcher during the Cold War, used it in min-max problems in game theory, specifically arising in defence military allocation problems, as well as in economics. Today there exist numerous other applications of Danskin's theorem in game theory, finance, economics, deterministic optimisation, stochastic optimisation, and much more, see e.g.\ \cite{BB, BR, Be, BS, Cl, Da, Da2, G, L, OT, R, TW, T, VS}. In particular, it is closely related to the so-called Envelope Theorem in the area of comparative statics. 

Due to its importance, there exist numerous extensions of Danskin's theorem towards {\it almost} every direction. Under appropriate additional assumptions, typical results of this type allow for general $F : \mX \by \K \larrow \R$, where $\mX$ is a Banach space or even a topological vector space, $\K$ is a compact or even a locally compact topological space, $u\mapsto F(u, \cdot)$ can be Fr\'echet differentiable on $\mX$ or convex, $(\p F)_u$ can be continuous or perhaps merely upper semi-continuous on $\mX \by \K$, etc (see e.g.\ the thorough historical review in \cite{BR} for precise results, and references therein). Therefore, under appropriate extra assumptions, one can still obtain that the envelope $f$, given by \eqref{1.1}, satisfies suitably interpreted versions of \eqref{1.2} (e.g.\ as sub-differentials in the convex case).

\smallskip 

\noi {\bf The limitations.} To the best of our knowledge, no extension of Danskin's theorem exists in the literature that allows to replace the maximum in \eqref{1.1} by the essential supremum with respect to some measure where $F(u,\cdot)$ is merely measurable, not even for the Lebesgue measure $\mL^m$ on $\K \sub \R^m$. In fact, it can easily be seen that such a direct extension is {\it not possible}. Firstly, it is unclear whether the measure-theoretic analogue of \eqref{1.1}, namely the \emph{generalised $\L^\infty$ envelope}
\beq
\label{1.4}
f(u) = \underset{\K}{\ess\sup} \, F(u,\cdot),
\eeq
even need be semi-differentiable on $\R^n$, but even if is somehow is, then the semi-differential cannot be expected to be given by the obvious analogue of \eqref{1.2}, which, for the right-hand side, would be
\beq
\label{1.5}
``\ (\mfD f)^+_u(\phi) =   \underset{k \in \K(u)}{\ess\sup}\,  \big(\p F(\cdot, k)\big)_u(\phi)\ ".
\eeq
Indeed, if it happens that the set $\K(u)$ arising from \eqref{1.3}-\eqref{1.4} is any $\mL^m$-nullset (for example a singleton or countable set), then the essential supremum in  \eqref{1.5} is undefined.

\smallskip 

\noi {\bf The $\L^\infty$ motivation.} Why would one care about such a measure-theoretic generalisation of Danskin's theorem? The inspiration arises from a completely different and very important field of mathematics, for which such an extension, if it existed, would have a major impact on all of its working philosophy, techniques and methodology, re-interpreting most existing results. The \emph{Calculus of Variations in $\L^\infty$} is the branch of the Calculus of Variations which is concerned with the study of variational problems for supremal functionals (also called $\L^\infty$ functionals), as well as with the connection with any extremality conditions, typically arising in the form of PDEs. This is the counterpart of the classical branch of the Calculus of Variations studying integral functionals, a classical but still extremely active field, which has its roots in the work of Euler in the 1600s. More precisely, the main object of study of the Calculus of Variations in $\L^\infty$ is the following.

\begin{definition}[Supremal functionals] \label{Definition1} \emph{Let $\Om \sub \R^n$ be a fixed open set, and for $k,n,N\in\N$, consider a Carath\'eodory function 
\[
\H \ :\ \Omega\times \Big(\mathbb R^N \times\mathbb R^{N \by n} \times \cdots  \by \mathbb R^{N \by n^{\ot k}}_{\mathrm s} \Big)  \longrightarrow \mathbb R.
\]
The $k$-th order supremal functional (arising from the ``supremand" $\H$) is given by
\beq
\label{1.6}
\left\{  \ \ \ \ 
\begin{split}
\E_\infty& \  : \ \W^{k,\infty}(\Om;\R^N) \by \mL(\Om) \larrow \R,
\\
&\E_\infty(u,\mO) := \underset{\mO}{\ess\sup}\, \H \big(\cdot, \D^{\vec{k}}u \big) .
\end{split}
\right. \ \ \ \
\eeq
Here, $\mL(\Om)$ symbolises the $\si$-algebra of Lebesgue-measurable subsets of $\Om$, and 
\[
\D^{\vec{k}}u := \big(u,\D u, \ldots, \D^k u\big)
\]
denotes the jet of derivatives up to $k$-th order of $\W^{k,\infty}$-Sobolev mappings $u : \Om\larrow \R^N$.}
\end{definition}
The Calculus of Variations in $\L^\infty$ studies variational problems for functionals of the type \eqref{1.6}, involving perhaps extra boundary conditions, pointwise, side, differential or functional constraints, etc. What is also of interest is to derive and study appropriate PDE conditions, serving as $\L^\infty$ analogues of the Euler-Lagrange equations, at which appropriately defined minimisers or ``critical points" satisfy
\[
``\ (\mfD \E_\infty)_u =0\ ".
\]

\noi {\bf The $\L^\infty$ caveats.}  As the quotation marks suggest, this is \emph{not actually possible}. Deferring a more detailed discussion on this field until Section \ref{Section 2}, we will only mention now that \emph{supremal functionals are actually non-differentiable in the Gateaux sense}, and this might happen \emph{even at absolute minimisers} (see Definition \ref{Definition21} and Example \ref{Example23}); the latter is one of the standing minimality notions in $\L^\infty$, rectifying the lack of locality in \eqref{1.6}, as it is not $\si$-additive with respect to the domain argument (Example \ref{Example20}). The failure of differentiability of \eqref{1.6} is not a matter of regularity or convexity properties of $\H$, and the situation does {\it not} improve even for $\C^\infty$ convex supremands (see Sections \ref{Section 2} and \ref{Section 8}). This is in direct contrast to the corresponding more classical case of integral functionals
\[
\begin{split}
\E \ & : \ \W^{k,p}(\Om;\R^N)  \larrow \R\ , \ \ \ \ \ \E(u) := \int_{\Om} \H \big(\cdot, \D^{\vec{k}}u \big) \, \mathrm d\mL^n,
\end{split}
\]
for $p<\infty$, where natural growth conditions and smoothness of $\H$ easily guarantee that $\E$ is Gateaux differentiable {\it everywhere}. Then, the vanishing of the Gateaux derivative
\[
(\mfD \E )_u(\phi) = \int_{\Om}\Big[ \H_{,\bfX} \big(\cdot, \D^{\vec{k}}u \big) \!: \! \D^{\vec{k}}\phi \Big] \, \mathrm d\mL^n,
\]
where $\phi \in \W^{k,p}(\Om;\R^N)$, leads to the weak formulation of the system of Euler-Lagrange PDEs, for a minimiser or a general critical point $u \in \W^{k,p}(\Om;\R^N)$. The lack of Gateaux differentiability of supremal functionals means that the {\it ``quintessence of the Calculus of Variations"} 
\[
\left.
\begin{array}{c}
\text{Global minimisers $u$:}  
\\
\E(u) \leq \E(\psi), \ \forall\, \psi
\end{array} \ 
\right\}
\overset{\text{Gateaux}}{\underset{(\Leftarrow (+) \text{\,convexity})}{\underset{\text{differentiability}}{\Leftarrow\!=\!=\!=\!=\!=\!=\!=\!\Rightarrow}}} 
\left\{\ 
\begin{array}{c}
\text{Euler-Lagrange PDE:}
\\
 (\mfD \E )_u(\psi-u) =0,\ \forall\,\psi,
\end{array}
\right.
\]
has \underline{no $\L^\infty$ counterpart}, and no such direct methodology and techniques exist for supremal functionals, even when we have smoothness and convexity of $\H$  in all variables jointly. 

\smallskip

\noi {\bf State-of-the-art in $\L^\infty$.}  Historically, the lack of differentiability of \eqref{1.6} has been partially circumvented by looking at differentiable approximations of the $\L^\infty$ functional, conventionally $\L^p$-approximations as $p\to\infty$. The hope is that by approximation one can discover limiting PDE conditions, aiming then at showing that they are actually associated to the $\L^\infty$ functional. There exist two main approaches in this regard, discussed in Section \ref{Section 2}. 

The first approach is due to Aronsson himself, the founder of the field (see \cite{A1, A2, A3, A4}), and leads to the celebrated {\it $\infty$-Laplacian} and the general {\it Aronsson equation}. These equations have attracted considerable interest by the community. In the scalar case, the theory of viscosity solutions has proven to be the appropriate framework to study them. Without any pretence of being exhaustive, we refer to \cite{AP, ACJS, AB, BEJ, BJ, BJW1, BJW2, BDP, BK, CDPP, C1, C2, CWY, DPZZ, ES, ESm, K5, KZZ, KZ, MWZ, PWZ, PZ, RZ, RZ2, Yu}. In the vectorial and higher order case, Aronsson-type systems require distinct novel approaches as these systems are generally fully nonlinear, non-elliptic and with discontinuous coefficients  \cite{AK, AyK1, AyK2, CKM2, CKP, DK, K1, K4, K6, K7, K8, K9, K10, KP, KS, PP, SS}.  The other approach, which is more recent, leads to a family of divergence equations involving measures, parameterised by the subdomains of $\Om$ (see \cite{EY} for $k=N=1$ and \cite{K1, KM1, KM2, KM3} for extensions). 

Except for very {\it special} cases (e.g.\ $N=k=1$, pure $(x,\D u(x))$ dependence, convexity and uniform coercivity of $\H$, see e.g.\ Example \ref{Example36} taken from \cite{Yu}), neither approach can lead to a general characterisation of absolute minimisers under natural level-convexity assumptions of $\H$, and these PDEs are only necessary conditions. For instance, in the vectorial case, there might even exist non-minimising smooth solutions to the model $\infty$-Laplace system \cite{KS}. Level-convexity is the natural notion of ``quasiconvexity" for supremal functionals, and is much weaker than convexity (see Section \ref{Section 2} for further details, and \cite{RZ2} for a comprehensive account). More crucially, a {\it deeper understanding of the regularity of supremal functionals and its connections to minimality still eludes us}. In particular, it remains a mystery how it is possible for absolute minimisers of supremal functionals to satisfy necessary PDE conditions, despite being non-differentiable and (in a sense) non-local.

\smallskip

\noi {\bf Some $\L^\infty$ history \& raison d'$\mathbf{\hat e}$tre.} Unlike the integral Calculus of Variations which is a classical field, the supremal counterpart is a much more modern field. Perhaps the main justification for this gap is that supremal variational problems {\it do not} arise via least action or conservation principles in Physics. Notwithstanding, except for the intrinsic mathematical interest, supremal problems are very important in various applications for which minimising the maximum energy/action/cost/error is more important than minimising the integral average. Minimising in $\L^\infty$ excludes at the outset large pointwise energy ``spikes" of small average. Examples include fracture, optimal shape design, PDE-constrained optimisation, inverse problems, data assimilation, machine learning, image processing, etc (see e.g.\ \cite{BEJ, BJ, BK, BP, CK1, CK2, CKM, ETT, GNP,  K1, K3, K3.5, SS}). However, $\L^\infty$ problems have also an intrinsic mathematical interest, as the space $\L^\infty$ is connected to many important problems. Aronsson's motivation was the problem of optimal Lipschitz extensions, whilst $\L^\infty$ problems relate also to quasi-conformal mappings \cite{CR, K6} and curvature minimisation problems connected to the Yamabe and Nirenberg problems in Geometry  \cite{MS, S}. The first order scalar theory is nowadays a very well-developed field  (see e.g.\ \cite{C1, K5}). The first vector-valued contributions arose in \cite{K10}, and the first higher order contributions arose in \cite{KP} (scalar) and in \cite{CKM2} (vectorial). The field is extremely active, but still largely under development.

\smallskip

\noi {\bf Synopsis of main results.} This work is motivated by the current incomplete understanding of the regularity of supremal functionals in the Calculus of Variations in $\L^\infty$. In the absence of any intrinsic techniques materialising an ``$\L^\infty$-quintessence of the Calculus of Variations" (which actually cannot really exist in the integral format, since Gateaux differentiability fails), the current state of affairs in $\L^\infty$ is severely constricted by the use of extrinsic ad-hoc approximation-based methodologies.

To this end, our first main result of independent interest concerns a {\it semi-differentiability result for generalised $\L^\infty$ envelopes}, which can be seen as a measure-theoretic generalisation of Danskin's theorem. The missing ingredient which hindered such an extension until now was an appropriate measure-theoretic interpretation of the notion of ``{\it maximum over an Argmax set}", which turns out to be different from the expected one shown in \eqref{1.5}. This applies to functions defined on the product of a Banach space with a measure space, and requires only natural hypotheses, and is the content of our first main result, Theorem \ref{Theorem2}. By applying this result to the Calculus of Variations in $\L^\infty$, in Theorem \ref{Theorem3}  we establish that general $k$-th order supremal functionals as in Definition \ref{Definition1} are everywhere Gateaux semi-differentiable, with an explicit formula for the semi-differential. Further, in Theorem \ref{Theorem5} we establish that for general level-convex supremal functionals, (absolute) minimality is necessary and sufficient for the satisfaction of the Partial Differential Inequality (PDI) given by the non-negativity of the semi-differential. Finally, in Theorem \ref{Theorem26} we show the necessity of the $k$-th order Aronsson equation for the satisfaction of the semi-differential PDI, for simplicity only for classical solutions and in the scalar case. Sufficiency is excluded by Example \ref{Example36}.

\smallskip

\noi {\bf Significance and impact.}  Theorem \ref{Theorem2} has potential far-reaching applications in a variety of fields which utilise Danskin's theorem and the Envelope theorem, like game theory, finance, economics, deterministic and stochastic optimisation, etc. Theorem \ref{Theorem3} solves a long-standing open problem in the Calculus of Variations in $\L^\infty$ regarding the regularity of supremal functionals. The new PDI arises from the following representation of the semi-differential (expressed here for simplicity only for mappings $u,\phi \in \C^k(\overline{\Om};\R^N)$, for $\phi \not \equiv 0$):
\[
(\mfD \E_\infty)^+_u(\phi) \,= \sup_{\Argmax \{  \H(\cdot, \D^{\vec{k}}u)\, : \, \supp(\phi) \}} \! \Big[ \H_{,\bfX} \big(\cdot, \D^{\vec{k}}u \big) \!: \! \D^{\vec{k}}\phi \Big].
\]
This result demystifies the situation regarding many of the properties of \eqref{1.6}, including for instance why certain PDEs are possible to arise by $\L^p$ approximation as $p\to\infty$, giving a new regularity assertion and new intrinsic machinery for $\L^\infty$ problems. Theorem \ref{Theorem5} allows the ``quintessence of the Calculus of Variations" to be made available in $\L^\infty$, merely by replacing differentiability by semi-differentiability and PDE by PDI, under natural assumptions, without requiring convexity, for any order and in any dimension:
\[
\left.
\begin{array}{c}
\text{Absolute minimisers $u$:}  
\\
\E_\infty(u,\mO) \!\leq \! \E_\infty(\psi,\mO), \ \forall\, \psi, \, \forall\, \mO \! \Subset \Om
\end{array} 
\right\}
\overset{\text{Gateaux semi-}}{\underset{    \underset{(\Leftarrow (+) \text{\,level-convexity})}{\text{differentiability}}   }{\Leftarrow\!=\!=\!=\!=\!=\!=\!=\!\Rightarrow}} 
\left\{
\begin{array}{c}
\text{{\it (New)} PDI:}\ \
\\
 (\mfD \E_\infty )^+_u(\psi-u) \geq 0,\ \forall\,\psi.
\end{array} 
\right. \ \
\]
Hence, our results offer a new strong intrinsic toolbox, as well as insights into a long-standing obscure situation, rendering the trademark need of extrinsic $\L^p$ approximations as $p\to\infty$, obsolete. In particular, Aronsson-type PDEs, long regarded until now the ``PDE analogues of the Euler-Lagrange equation in $\L^\infty$", are merely necessary conditions deducible from the semi-differential PDI (they are sufficient only in the scalar first order case, under convexity, coercivity and practically pure gradient dependence). The same applies to the more modern approach of parametric families of PDEs for measures arising as necessary conditions, which are also even less understood.

\smallskip

\noi {\bf The generalised measure-theoretic Danskin theorem.} Now we state our first principal result, and discuss additional results and extensions established in Section \ref{Section 3}. Let $(\mX,\|\cdot\|)$ be a Banach space with topological dual $(\mX^*,\|\cdot\|_*)$, and let $(\Om,\mathfrak L,\la)$ be a measure space. Let
\beq
\label{1.7}
\F \ : \ \mX \by \Om \larrow \R
\eeq
be a function, which satisfies that
\beq
\label{1.8}
\left\{
\begin{array}{ll}
\F(u,\cdot) \in \L^\infty(\Om,\la), &\text{for all }  u\in \X, \smallskip
\\
\F(\cdot,x) \in \C^1(\mX), &\la\text{-a.e.}\ x\in \Om,
\end{array}
\right.
\eeq
where the $\C^1$ regularity in \eqref{1.8} is meant in the standard Fr\'echet sense, and consider for $\la$-a.e.\ $x\in \Om$, the partial (Fr\'echet) differential operator
\beq
\label{1.9}
\p \F (\cdot,x) \ : \ \mX \larrow \mX^*, \ \ u\mapsto \big(\p \F (\cdot,x)\big)_u.
\eeq
Our first main result is the following.

\begin{theorem}[Semi-differentiability of the generalised $\L^\infty_\la$ envelope] \label{Theorem2} Suppose $\F$, and $\p \F (\cdot,x)$ are given by \eqref{1.7} and  \eqref{1.9}, where $(\mX,\|\cdot\|)$ is a Banach space and $(\Om,\mathfrak L,\la)$ is a measure space. Suppose $\F$ satisfies \eqref{1.8}, and additionally that 
\begin{align}
\label{1.10}
& \p \F (\cdot,x) \text{ is strongly-weakly* continuous, $\la$-essentially uniformly in}\ x\in \Om .
\\\
\label{1.11}
& \F (\cdot,x) \text{ is (strongly) continuous,  \hspace{36pt} $\la$-essentially uniformly in}\ x\in \Om .
\end{align}
Consider the $\L^\infty$-envelope of the function $\F$ with respect to $\la$-a.e.\ $x\in \Om$:
\beq
\label{1.12}
f(u) := \la \text{\!-} \underset{x\in \Om}{\ess\sup}\, \F(u,x), \ \ \ f \ : \ \mX \larrow \R.
\eeq
Then, $f$ is directionally (Gateaux) semi-differentiable everywhere on $\mX$, and the (right) semi-differential is given by
\beq
\label{1.13}
\begin{split}
(\mfD f)^+_u(\phi) = \lim_{\e \to 0^+} \! \bigg(\la \text{\!-} \underset{x\in \Om_\e(u)}{\ess\sup}\, \big(\p \F (\cdot,x)\big)_u(\phi) \bigg),
\end{split}
\eeq
for all $u,\phi \in \mX$.  In \eqref{1.13}, for any $\e>0$, $\Om_\e(u)$ is the ``approximate argmax set", given by
\beq
\label{1.14}
 \Om_\e(u) := \bigg\{ \hat x\in \Om \ : \ \F(u,\hat x) \geq   \la \text{\!-} \underset{x\in \Om}{\ess\sup}\, \F(u,x)-\e\bigg\}.
 \eeq
\end{theorem}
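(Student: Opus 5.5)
Throughout, write $f$ for the envelope \eqref{1.12}, and set
\[
g_\e(u,\phi):=\la\text{-}\underset{\Om_\e(u)}{\ess\sup}\,\big(\p\F(\cdot,x)\big)_u(\phi).
\]
Two preliminary observations come first. Every $\Om_\e(u)$ has positive $\la$-measure, by the definition of essential supremum, so $g_\e(u,\phi)$ is well defined. Since $\Om_\e(u)$ shrinks as $\e\downarrow0$, the function $\e\mapsto g_\e(u,\phi)$ is nondecreasing in $\e$. By \eqref{1.10}, $\p\F(\cdot,x)$ is bounded near $u$ uniformly in $x$, so $g_\e$ is bounded. Hence the limit in \eqref{1.13} exists; call it $L$, and note $L=\inf_{\e>0}g_\e(u,\phi)$.

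\textbf{Upper bound.} We aim to show $\limsup_{t\to0^+}\frac{f(u+t\phi)-f(u)}{t}\le L$, and argue pointwise with the mean value theorem. For a.e.\ $x$ and each $t>0$,
\[
\F(u+t\phi,x)=\F(u,x)+t\big(\p\F(\cdot,x)\big)_{u+s_xt\phi}(\phi),\qquad s_x\in(0,1).
\]
Fix $\e>0$ and $\de>0$. Using \eqref{1.10} along the segment $\{u+st\phi\}$, for $t$ small we get $\big(\p\F(\cdot,x)\big)_{u+st\phi}(\phi)\le\big(\p\F(\cdot,x)\big)_u(\phi)+\de$ for a.e.\ $x$. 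Here weak* continuity suffices, since $\phi$ is fixed. We then split $\Om$ into two pieces.
\begin{itemize}
\item On $\Om_\e(u)$: $\F(u+t\phi,x)\le f(u)+t\,(g_\e+\de)$ a.e.
\item On $\Om\setminus\Om_\e(u)$: $\F(u,x)<f(u)-\e$, so $\F(u+t\phi,x)\le f(u)-\e+tC$, where $C$ is a uniform bound on $\p\F$ near $u$. This is below $f(u)$ once $t<\e/C$.
\end{itemize}
Taking the essential supremum gives $f(u+t\phi)\le f(u)+t\max\{g_\e+\de,0\}$. The zero in the max is harmless only if $g_\e\ge0$, so this needs refining. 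Instead I would bound the complement piece by $f(u)-\e/2$ and compare with a lower estimate $f(u+t\phi)\ge f(u)-Ct$. The complement then does not realise the essential supremum for small $t$, which yields $f(u+t\phi)\le f(u)+t(g_\e+\de)$ exactly. Dividing by $t$ and letting $t\to0$, then $\de\to0$, then $\e\to0$, gives the upper bound.

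\textbf{Lower bound.} This is the step I expect to be the main obstacle: we must show $\liminf_{t\to0^+}\frac{f(u+t\phi)-f(u)}{t}\ge L$. The difficulty is that near-maximisers are only approximate, so the error $\e$ must be coupled to $t$. The plan is to take $\e=\e(t)=o(t)$, for instance $\e=t^2$. Since $L\le g_{t^2}$, the set
\[
A_t:=\Big\{x\in\Om_{t^2}(u):\big(\p\F(\cdot,x)\big)_u(\phi)\ge L-\de\Big\}
\]
has positive measure. On $A_t$, the mean value theorem together with the uniform continuity \eqref{1.10} gives, for small $t$,
\[
\F(u+t\phi,x)\ge f(u)-t^2+t(L-2\de).
\]
Hence $f(u+t\phi)\ge f(u)-t^2+t(L-2\de)$. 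Dividing by $t$ and letting $t\to0$, then $\de\to0$, gives the lower bound. Here \eqref{1.11} serves to ensure that $f$ is finite and continuous, and the essential uniformity is what allows all these a.e.\ inequalities to be taken simultaneously on a common full-measure set.

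Combining the two bounds gives existence of the right semi-derivative and formula \eqref{1.13}.
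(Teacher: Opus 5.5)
Your proof is correct and is essentially the paper's. The lower bound is Lemma~\ref{Lemma6}: increments are evaluated on a positive-measure set of near-maximisers in $\Om_\e(u)$, with you coupling $\e=t^2$ where the paper sends $\e\to0^+$ first at fixed $t$. Your observation that $\Om\setminus\Om_\e(u)$ cannot realise $\ess\sup\F(u+t\phi,\cdot)$ for small $t$ is the same localisation as the inclusion $\Om_\de(u+t\phi)\sub\Om_{\de+2\om_u(t\|\phi\|)}(u)$ of Claim~\ref{Claim8} in Lemma~\ref{Lemma7}.

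One correction. The constant $C$ does not follow from \eqref{1.10} alone, which only controls differences of $\p\F$; you would also need \eqref{1.8}. You can avoid this by replacing the two uses of $|\F(u+t\phi,x)-\F(u,x)|\le Ct$ with the modulus $\om_u(t\|\phi\|)$ supplied by \eqref{1.11}. That is exactly where \eqref{1.11} enters the argument, rather than in guaranteeing that $f$ is finite, which already follows from \eqref{1.8}.
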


\begin{remark} Note that in \eqref{1.14}, $\Om_\e(u) = \big\{ \F(u,\cdot) \geq f(u)-\e\big\}$ is the $\mathfrak L$-measurable superlevel set containing all points $\e$-close to the $\la$-essential supremum.
Since the approximate argmax sets $(\Om_\e(u))_{\e>0}$ are monotone in $\e$, it follows that the limit in \eqref{1.13} is actually an infimum over $\e>0$. By juxtaposing \eqref{1.13} with \eqref{1.2}$^{+}$, one can retrospectively interpret the conclusion of Theorem \ref{Theorem2} by saying that the maximum over the argmax set in the classical theorem is replaced by an asymptotic approximate version of it, evocative of the way that ill-defined pointwise values in $\L^\infty$ are often replaced by the ``essential limsup"  \cite{K3.5} (also called ``local Lipschitz constant" if $k=N=1$, see \cite{C1}). However, the essential limsup involves shrinking families of open balls, whilst here we have the approximate argmax sets $(\Om_\e(u))_{\e>0}$, which are merely $\mathfrak L$-measurable. Unless we have continuity and compactness (in which case we reduce to the classical extensions of Danskin's theorem, see below), in general the approximate argmax sets cannot be replaced by open sets. However, due to the inclusions
\[
 \Om_\e(u) \sub \bigg\{ \hat x\in \Om \ : \ \F(u,\hat x) >  \la \text{\hspace{-1pt}-} \underset{x\in \Om}{\ess\sup}\, \F(u,x)-2\e\bigg\} \sub \Om_{2\e}(u),
 \]
the sets $(\Om_\e(u))_{\e>0}$ can be replaced in \eqref{1.13} by the version in which the inequality appearing in \eqref{1.14} is strict. In addition to the conclusion of Theorem \ref{Theorem2} above, we also have precise a priori estimates on the difference quotients, see Corollary \ref{Corollary19A}. 
\end{remark}

\begin{remark}[On the assumptions] (i) The precise meaning of assumptions \eqref{1.10} and \eqref{1.11} is respectively (see also \eqref{1.15}-\eqref{1.16} that follow)
\[
\begin{split}
\label{1.15}
\forall\,\phi \in \mX,\ \ &\la \text{\hspace{-1pt}-} \underset{x\in \Om}{\ess\sup}\, \Big| \Big[\big(\p\F(\cdot,x)\big)_w 
\! - \! \big(\p\F(\cdot,x)\big)_v\Big](\phi)\Big| \larrow 0, \ \text{ as }w\to v \text{ in }\mX,
\\
& \la \text{\hspace{-1pt}-} \underset{x\in \Om}{\ess\sup}\, \Big| \F(w,x)  - \F(v,x) \Big| \larrow 0, \hspace{68pt} \text{ as }w\to v \text{ in }\mX.
\end{split}
\]
(ii) It appears that assumption \eqref{1.8} on the continuous Fr\'echet differentiability of $\F(\cdot,x)$ for $\la$-a.e.\ $x\in \Om$ could be relaxed to a slightly weaker assumption involving its continuous Gateaux differentiability, but this would create additional difficulties in the proof, as the relevant integrals appearing would have to be interpreted in the weaker Gelfand-Pettis sense (see e.g.\ \cite{Ta, V}). Since supremal functionals always satisfy \eqref{1.8} under minimal hypotheses on the supremand (see Theorem \ref{Theorem3}), we will not pursue this line of generalisation.
\end{remark}

Note that a change of variables gives the identity $(\mfD f)_u^- = - (\mfD f)_u^+(-\, \cdot)$,
which implies that \eqref{1.13} is equivalent to the following expression for the left semi-differential:
\[
(\mfD f)^-_u(\phi) = \lim_{\e \to 0^+} \!\bigg(\la \text{-} \underset{x\in \Om_\e(u)}{\ess\inf}\, \big(\p \F (\cdot,x)\big)_u(\phi) \bigg).
\]
We will therefore refrain from discussing $(\mfD f)_u^-$, unless explicitly required. If in addition to \eqref{1.11} $\F$ is locally Lipschitz continuous, $\la$-essentially uniformly in $x\in \Om$, in Proposition \ref{Proposition11} it is shown that $f$ is also \emph{Hadamard semi-differentiable and Penot semi-differentiable} (i.e.\ M-semi-differentiable), with the same representation \eqref{1.13} for the stronger semi-differential. This makes the general Hadamard semi-differential Calculus (see e.g.\ \cite{De1, De2, De3}) available for generalised $\L^\infty$ envelopes. In particular, for any fixed $u\in\mX$, $(\mfD f)^+_u : \mX \larrow \R$ is a positively $1$-homogeneous continuous convex functional, the chain rule holds true, and $f\mapsto (\mfD f)^+_u (\phi)$ is a linear functional (Remark \ref{Remark12}). 

By comparing \eqref{1.13}-\eqref{1.14} with their classical pointwise counterparts  \eqref{1.2}$^+$-\eqref{1.3}, the natural question of independent interest arises whether \eqref{1.13} can be recast in a form that resembles \eqref{1.2}$^+$, by interpreting it as a ``measure-theoretic maximum over the argmax set". As we show in Section \ref{Section 4}, this is indeed possible. By utilising the new measure-theoretic concepts of \emph{approximate supremum} ``ap-sup" and of the \emph{measure-theoretic argmax set} ``$\la$-Argmax", introduced in Section \ref{Section 4} (specifically Definitions \ref{Definition13}, \ref{Definition15} and \ref{Definition18}), we can reformulate \eqref{1.13} in a more familiar fashion, that corresponds directly to classical pointwise Danskin-type theorems (Corollary \ref{Corollary19}):
\beq
\label{1.17}
(\mfD f)^+_u(\phi) = \underset{ \la \text{-} \Argmax\{\F(u,\cdot) \,:\, \Om \}}{\textrm{ap-sup}}\, \big(\p \F (\cdot,x)\big)_u(\phi). 
\eeq
However, the elegant formulation \eqref{1.17} requires an additional structure on the measure space $(\Om,\mathfrak L, \la)$, which needs to be a Borel regular metric measure space, satisfying and differentiation property (namely every locally integrable function satisfies the Lebesgue differentiation theorem), and also that $\supp (\la)=\Om$ (namely $\la$ assigns positive values to every non-empty open set in $\Om$, see Sections \ref{Section 2} and \ref{Section 4} for details). This additional required structure has no bearing on applications to the Calculus of Variations in $\L^\infty$, and hence the formulation \eqref{1.17} is always available. As we show in Section \ref{Section 4} (Proposition \ref{Proposition16}), the generalised notions of approximate supremum and essential argmax set reduce to the standard pointwise concepts if we have continuity for $\F,\p\F$ and compactness for $\Om$. In that case, \eqref{1.17} reduces to the conclusion of the classical pointwise (infinite-dimensional) Danskin theorem.

\smallskip

\noi {\bf Semi-differentiability of supremal functionals.} Our second principal result concerns the application of Theorem \ref{Theorem2} to the regularity question in the Calculus of Variations in $\L^\infty$, which motivated this work in the first place. For brevity, let us symbolise
\[
\R^{\vec M}:= \mathbb R^N \times\mathbb R^{N \by n} \times \cdots  \by \mathbb R^{N \by n^{\ot k}}_{\mathrm s}
\]
when $n,k,N\in\N$, and recall also the jet notation $\D^{\vec{k}}u = \big(u,\D u, \ldots, \D^k u\big)$.

\begin{theorem}[Semi-differentiability of supremal functionals] \label{Theorem3} Suppose that $\Om \sub \R^n$ is an open set, and let $\H : \Om \times \R^{\vec M} \longrightarrow \R$ be a Carath\'eodory function, where $n,k,N\in\N$. Consider the supremal functional $\E_\infty : \W^{k,\infty}(\Om;\R^N) \by \mL(\Om) \larrow \R$ given by \eqref{1.6}, namely
\[
\E_\infty(u,\mO) = \underset{\mO}{\ess\sup}\, \H \big(\cdot, \D^{\vec{k}}u \big) .
\]
Suppose that $\H(x,\cdot)$ is in $\C^1\big( \R^{\vec M} \big)$ essentially uniformly in $x\in\Om$, namely for any $R>0$ there exists an increasing modulus of continuity $\om_R \in \C(0,\infty)$ with $\om_R(0^+)=0$ such that
\beq
\label{1.18}
\left\{ \ \
\begin{array}{r}
\underset{\Om}{\ess\sup} \Big| \H(\cdot,\bfX'') -  \H(\cdot,\bfX')\Big| \leq \om_R\big( |\bfX''-\bfX'|\big) , \smallskip
\\ 
\underset{\Om}{\ess\sup} \Big| \H_{,\bfX}(\cdot,\bfX'') -  \H_{,\bfX}(\cdot,\bfX')\Big| \leq \om_R \big( |\bfX''-\bfX' |\big) ,
\end{array}
\right.
\eeq
for all $ \bfX'',\bfX' \in \mB_R \sub \R^{\vec M}$. We also suppose that
\begin{align}
\label{1.19}
\big|  \H(\cdot , \mathbf 0) \big| \in \L^\infty(\Om).
\end{align}
Then, $\E_\infty(\cdot,\Om)$ is (Gateaux) semi-differentiable everywhere on $\W^{k,\infty}(\Om;\R^N)$, and its (right) semi-differential can be represented as
\beq
\label{1.20}
\big(\mfD \E_\infty(\cdot,\Om)\big)^+_u(\phi) = \lim_{\e \to 0^+}  \! \Bigg( \underset{  \big\{ \H(\cdot, \D^{\vec{k}}u) \geq \E_\infty(u,\Om)-\e \big\} }{\ess\sup}\, \Big[
\H_{,\bfX} \big(\cdot, \D^{\vec{k}}u \big) \!: \! \D^{\vec{k}}\phi \Big] \Bigg),
\eeq
for any $u,\phi \in \W^{k,\infty}(\Om;\R^N)$.
\end{theorem}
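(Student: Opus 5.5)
The plan is to deduce Theorem \ref{Theorem3} directly from Theorem \ref{Theorem2}. Take the Banach space $\mX := \W^{k,\infty}(\Om;\R^N)$ with its usual norm, the measure space $(\Om,\mL(\Om),\mL^n)$, and define
\[
\F(u,x) := \H\big(x, \D^{\vec{k}}u(x)\big), \qquad (u,x) \in \W^{k,\infty}(\Om;\R^N) \by \Om,
\]
having fixed a pointwise representative of $\D^{\vec k}u$ for each $u$. Then $f(u)=\E_\infty(u,\Om)$ and $\Om_\e(u)=\{\H(\cdot,\D^{\vec k}u)\geq \E_\infty(u,\Om)-\e\}$. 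Once we show $(\p\F(\cdot,x))_u(\phi) = \H_{,\bfX}(x,\D^{\vec k}u(x)):\D^{\vec k}\phi(x)$ a.e., formula \eqref{1.13} becomes \eqref{1.20}. So the whole task is verifying \eqref{1.8}, \eqref{1.10} and \eqref{1.11}.

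\emph{Step 1 (boundedness and \eqref{1.11}).} For $u\in\W^{k,\infty}$ we have $|\D^{\vec k}u|\leq \|u\|_{\W^{k,\infty}}=:R$ a.e. By \eqref{1.18}, $|\H(\cdot,\D^{\vec k}u)|\leq |\H(\cdot,\mathbf 0)|+\om_R(R)$, and this is in $\L^\infty$ by \eqref{1.19}. Measurability follows because $\H$ is Carathéodory. For $v,w$ in a ball of radius $R$ in $\mX$, \eqref{1.18} gives $\ess\sup|\F(w,\cdot)-\F(v,\cdot)|\leq \om_R(\|w-v\|)$, which is \eqref{1.11}. Similarly, \eqref{1.18}$_2$ yields $\ess\sup|\H_{,\bfX}(\cdot,\D^{\vec k}w)-\H_{,\bfX}(\cdot,\D^{\vec k}v)|\leq\om_R(\|w-v\|)$. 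Pairing with $\D^{\vec k}\phi$ gives \eqref{1.10}, in fact in operator norm, which is stronger than strong-weak* continuity.

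\emph{Step 2 (pointwise $\C^1$).} This is the main obstacle. Elements of $\W^{k,\infty}$ are equivalence classes, so the map $u\mapsto \F(u,x)$ for a fixed $x$ requires care: the exceptional null set depends on $u$, and there are uncountably many $u$. I would try first to work with the precise representatives (Lebesgue points) of $\D^{\vec k}u$. This is linear in $u$ wherever it is defined, and point evaluation of the precise representative is bounded by $\|u\|_{\W^{k,\infty}}$ at every point where all the limits exist. If $\mX$ is non-separable this still leaves an issue.

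The cleaner alternative is to bypass \eqref{1.8} by inspecting what the proof of Theorem \ref{Theorem2} really uses. It uses only the mean-value identity
\[
\F(u+t\phi,x)-\F(u,x)=t\int_0^1 (\p\F(\cdot,x))_{u+st\phi}(\phi)\,ds \quad \text{for a.e. } x,
\]
for each fixed $u,\phi$. Here it holds directly, since for fixed $u,\phi$ the curve $s\mapsto \D^{\vec k}(u+st\phi)(x)$ is affine for a.e. $x$ (one common null set) and $\H(x,\cdot)\in\C^1$ for a.e. $x$. Thus I would restrict everything to the two-dimensional subspace $\mathrm{span}\{u,\phi\}$, where the representatives can be chosen linearly off a single null set. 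On that subspace \eqref{1.8} holds genuinely, and Theorem \ref{Theorem2} applies verbatim.

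\emph{Step 3 (conclusion).} The semi-derivative at $u$ in direction $\phi$ involves only $f$ restricted to $u+\R\phi$, so applying Theorem \ref{Theorem2} on $\mathrm{span}\{u,\phi\}$ gives \eqref{1.13}. Substituting the identified partial differential and the identified $\Om_\e(u)$ yields \eqref{1.20} for arbitrary $u,\phi\in\W^{k,\infty}(\Om;\R^N)$.
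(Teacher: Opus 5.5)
Your proposal is correct, and its skeleton is the paper's. It uses the same $\F(u,x)=\H(x,\D^{\vec k}u(x))$ on $\W^{k,\infty}(\Om;\R^N)\times\Om$ with Lebesgue measure. It uses \eqref{1.18}--\eqref{1.19} in the same way to get boundedness, \eqref{1.11} and \eqref{1.10} (even in the dual norm). Then \eqref{1.20} is read off from \eqref{1.13}.

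Where you differ is in how you secure \eqref{1.8}. The paper verifies it on the whole space. It proves the Fr\'echet remainder bound $|\F(u+\phi,x)-\F(u,x)-\H_{,\bfX}(x,\D^{\vec k}u(x)):\D^{\vec k}\phi(x)|\le\om_R(\|\phi\|)\,\|\phi\|$ ``for a.e.\ $x$ and any $\phi\in\mB_1$''. This tacitly treats $u\mapsto\D^{\vec k}u(x)$ as a bounded linear map for a.e.\ $x$, simultaneously for all $u$ in a non-separable space. That can be justified, e.g.\ by choosing representatives through a linear lifting of $\L^\infty$, but the paper does not discuss it.

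You instead observe that the proof of Theorem~\ref{Theorem2} only uses $\F$ and $\p\F$ along the segment $u+[0,t]\phi$. You then apply it on the finite-dimensional Banach space $\mathrm{span}\{u,\phi\}$, where linear representatives exist off a single null set. The right semi-derivative at $u$ in direction $\phi$ only sees $f$ on $u+[0,\infty)\phi$, so nothing asserted in Theorem~\ref{Theorem3} is lost. Your treatment is the more careful one. The paper's global verification, once justified, has the advantage of delivering the hypotheses of Theorem~\ref{Theorem2} on all of $\W^{k,\infty}$ at once.

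One routine point is used silently by both arguments. Since \eqref{1.18} is applied at the $x$-dependent points $\D^{\vec k}u(x)$, you need it to hold off a single null set for all pairs $\bfX'',\bfX'$ simultaneously. This follows from the continuity of $\H(x,\cdot)$ and $\H_{,\bfX}(x,\cdot)$ together with a countable dense set of pairs.
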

The assumptions \eqref{1.18}-\eqref{1.19} are satisfied in all cases of interest (for example when $\H$ and its derivative $\H_{,\bfX}$ are continuous on $\smash{\overline{\Om} \by \R^{\vec M}}$). Note we could have perhaps required that $\big|  \H(\cdot , \mathbf X_0) \big| \in \L^\infty(\Om)$ for \emph{some} $\smash{\bfX_0 \in  \R^{\vec M}}$  instead of \eqref{1.19}, but this is only illusively more general, as \eqref{1.18}-\eqref{1.19} together imply $\big|  \H(\cdot , \bfX_0) \big| \in \L^\infty(\Om)$ for all $\smash{\bfX_0 \in \R^{\vec M}}$. 

If we additionally assume that $|\H_{,\bfX}(\cdot,\mathbf 0)| \in \L^\infty(\Om)$, in Proposition \ref{Proposition22} we show that  $\E_\infty(\cdot,\Om)$ is also Hadamard semi-differentiable and Penot semi-differentiable, with the corresponding stronger semi-differentials represented again by \eqref{1.20}. Here again this extra assumption is equivalent to the requirement $|\H_{,\bfX}(\cdot,\bfX_0)| \in \L^\infty(\Om)$ for \emph{some} point $\smash{\bfX_0 \in \R^{\vec M}}$. Further, in view of the measure-theoretic results of Section \ref{Section 4}, \eqref{1.20} can be concisely written as
\beq
\label{1.21}
\big(\mfD \E_\infty(\cdot,\Om)\big)^+_u(\phi) = \underset{\mL^n\text{-}\Argmax \{  \H(\cdot, \D^{\vec{k}}u)\, : \, \Om \}}{\mathrm{ap}\text{-}\!\sup} \! \Big[ \H_{,\bfX} \big(\cdot, \D^{\vec{k}}u \big) \!: \! \D^{\vec{k}}\phi \Big],
\eeq
for any $u,\phi \in \W^{k,\infty}(\Om;\R^N)$. 

Theorem \ref{Theorem2} is clearly valid on any open subset $\mO\subseteq \Om$ as well, therefore $\E_\infty(\cdot,\mO)$ is semi-differentiable everywhere on $\W^{k,\infty}(\mO;\R^N)$, with corresponding expressions for $(\mfD \E_\infty(\cdot,\mO))^+$ being given by \eqref{1.20}-\eqref{1.21} with $\mO$ in the place of $\Om$. That being said, a trademark hindrance of the Calculus of Variations in $\L^\infty$, mentioned earlier briefly but discussed in more detail in Section \ref{Section 2}, is that the standing notion of minimality for \eqref{1.6} involves variations on all subdomains $\mO\subseteq \Om$, or at least on compactly contained $\mO\Subset \Om$. The reason is that $\E_\infty(u,\cdot)$ is not an outer measure in any reasonable $\si$-sub-algebra of $\mL(\Om)$. 

In the light of the above comments, it appears that to characterise absolute minimality, one would need to consider a family of semi-differential operators parameterised by the open subdomains of $\Om$, namely $\smash{\big\{(\mfD \E_\infty(\cdot,\mO))^+ : \mO\subseteq \Om \big\}}$, rather than a single semi-differential operator ``$(\mfD \E_\infty)^+$" (this parameterisation appears for instance in the case in the derivation of PDEs involving measures via $\L^p$ as $p\to \infty$, see Section \ref{Section 3}). However, contrary to first appearances, this is not necessary. As shown in Proposition \ref{Proposition20} (Section \ref{Section 2}), absolute minimisers can be characterised by considering arbitrary variations $\phi \in \W^{k,\infty}_0(\Om;\R^N)$, as long as one selects as subdomains \emph{only} the sets $\{\phi \neq 0\}$ and compares energies for $\E_\infty(\cdot, \{\phi \neq 0\})$, instead of parameterising the notion by taking all $\phi \in \W^{k,\infty}_0(\mO;\R^N)$ for all open $\mO\subseteq \Om$ (Definition \ref{Definition21}). Namely, we can {\it discard compactly contained variations in the notion of absolute minimality}. This property does not seem to have previously appeared in the literature (but see the recent contribution \cite{CKM1}, in which we study the peculiar features of compactly supported variations). Anyhow, it turns out to be an exceptionally useful feature of supremal functionals, leading to the following subdomain-free concept of semi-differential:
\begin{definition}[Total semi-differential of supremal functionals]
Consider the supremal functional \eqref{1.6}. We define its (total) semi-differential operator
\[
\ \ \ \ \ \ \ (\mfD \E_\infty)^+ \ :\   \W^{k,\infty}(\Om;\R^N) \larrow \R^{ \W^{k,\infty}_0(\Om;\R^N)},  \ \ \ u\mapsto (\mfD \E_\infty)^+_u,
\]
by setting
\beq
\label{1.22}
\ \ \  (\mfD \E_\infty)^+_u(\phi)  := \left\{
 \begin{array}{ll}
\!\! \big(\mfD \E_\infty(\cdot,\{\phi\neq0\}\big)^+_u(\phi), & \phi \neq 0,
 \\ 
 \!\!0, & \phi =0, \phantom{a^\big[}
 \end{array}
 \right.
\eeq
for $\phi \in \W^{k,\infty}_0(\Om;\R^N)$.
\end{definition}
The right hand side of $(\mfD \E_\infty)^+$ is well-defined by Theorem \ref{Theorem3}. By utilising Proposition \ref{Proposition20}, the (total) semi-differential allows to recast the conclusion of Theorem \ref{Theorem3} as
\beq
\label{1.23}
(\mfD \E_\infty)^+_u(\phi) = \underset{\mL^n\text{-}\Argmax \{  \H(\cdot, \D^{\vec{k}}u)\, : \, \{\phi\neq0 \}\}}{\mathrm{ap}\text{-\!}\sup} \! \Big[ \H_{,\bfX} \big(\cdot, \D^{\vec{k}}u \big) \!: \! \D^{\vec{k}}\phi \Big].
\eeq
for any $u \in \W^{k,\infty}(\Om;\R^N)$ and arbitrary variations $\phi \in \W^{k,\infty}_0(\Om;\R^N)^\varobslash$, because \eqref{1.20}-\eqref{1.21} actually hold on any open $\mO\subseteq \Om$. Here we have used the notation
\[
\mathfrak W^\varobslash := \mathfrak W \set\{0\},
\]
when $\mathfrak W$ is any vector space. The above leads to the single semi-differential operator \eqref{1.22} being a candidate to characterise variationally (absolute) minimisers, without needing a family of operators. In that case, the methodology and working philosophy of variational principles for integral functionals would carry over to the supremal case.

\smallskip

\noi {\bf Characterisation of minimality in $\L^\infty$.} Our final principal result completes the picture, by showing that indeed non-negativity of the (total) semi-differential operator characterises absolute minimisers. Even though the necessity direction requires no assumptions additional to \eqref{1.18}-\eqref{1.19} imposed in Theorem \ref{Theorem3}, unsurprisingly, the  sufficiency direction requires stronger hypotheses. Specifically, $\H$ must be level-convex, namely $\H(x,\cdot)$ must have convex sublevel sets, and also it must be non-degenerate away from any of its global minima in $\smash{\R^{\vec M}}$. These assumptions are weaker than those required in the integral case for the sufficiency of the Euler-Lagrange equations, and weaker also than the assumptions for the Aronsson equation to be sufficient if $N=k=1$, which require convexity and uniform coercivity in $\D u$, and at most dependence on $x$, see Example \ref{Example36} and \cite{Yu}. If however $N\geq 2$, then not even the $\infty$-Laplace model PDE system suffices for minimality, see \cite{KS} and Section \ref{Section 2}.

\begin{theorem}[Variational characterisation of absolute minimisers via semi-differential PDI]  \label{Theorem5} In the context of Theorem \ref{Theorem3} and under the same assumptions \eqref{1.18}-\eqref{1.19}, consider the following statements concerning a fixed $u \in \W^{k,\infty}(\Om;\R^N)$:
\begin{itemize}

\item[\emph{(i)}] The mapping $u$ is an absolute minimiser of the supremal functional $\E_\infty$ given by  \eqref{1.6}, namely
\[
\ \ \ \ \ \ \E_\infty(u,\mO)\leq\E_\infty(u+\psi,\mO), \text{ for all }\mO \subseteq \Om \text{ open and all } \psi \in \W^{k,\infty}_0(\mO;\R^N).
\]
\item[\emph{(ii)}] The total semi-differential of \eqref{1.6}, expressed by \eqref{1.22}, is non-negative when evaluated at $u$: 
\[
\ \ \ \ \ \ (\mfD \E_\infty)^+_u\geq 0.
\]
By \eqref{1.23}, this means
\[
\ \ \ \ \ \ \underset{\mL^n\text{-}\Argmax \{  \H(\cdot, \D^{\vec{k}}u)\, : \, \{\phi\neq0 \}\}}{\mathrm{ap}\text{-\!}\sup} \! \Big[ \H_{,\bfX} \big(\cdot, \D^{\vec{k}}u \big) \!: \! \D^{\vec{k}}\phi \Big] \geq 0,
\]
for any $\phi \in \W^{k,\infty}_0(\Om;\R^N)^\varobslash$.
\end{itemize}
Then, we have that \emph{(i)} implies \emph{(ii)}. Conversely, \emph{(ii)} implies \emph{(i)} if in addition the supremand $\H$ is non-degenerate and level-convex, namely:
\begin{align}
\text{For a.e.\ $x\in \Om$, $\H_{,\bfX}(x,\cdot)\neq \mathbf 0$ on $\R^{\vec M} \set  \Argmin \big\{\H (x,\cdot) : \R^{\vec M} \big\}$}.
\label{1.24}
\\
\text{For a.e.\ $x\in \Om$ and any $t\in\R$, $\big\{ \H(x,\cdot) < t\big\}$ is convex in $\R^{\vec M} $.}
\label{1.25}
\end{align}
\end{theorem}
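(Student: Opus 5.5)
For (i)$\Rightarrow$(ii) I will use Theorem \ref{Theorem3} applied on the open set $\mO:=\{\phi\neq0\}$. Fix $\phi\in\W^{k,\infty}_0(\Om;\R^N)^\varobslash$ and set $\mO=\{\phi\neq 0\}$; by continuity of $\phi$ this set is open. I would first check (or take from Proposition \ref{Proposition20}) that $\phi|_{\mO}$ is an admissible variation in $\W^{k,\infty}_0(\mO;\R^N)$, and the same for $t\phi$ with $t>0$. Absolute minimality then gives $\E_\infty(u+t\phi,\mO)-\E_\infty(u,\mO)\geq0$ for every $t>0$. I divide by $t$ and let $t\to0^+$. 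Theorem \ref{Theorem3} on $\mO$ says this limit exists and equals $(\mfD \E_\infty(\cdot,\mO))^+_u(\phi)=(\mfD\E_\infty)^+_u(\phi)$. Hence the limit is non-negative, and by \eqref{1.23} this is exactly (ii).

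For (ii)$\Rightarrow$(i) I will use the reduction of Proposition \ref{Proposition20}. It suffices to show $\E_\infty(u,\{\phi\neq0\})\leq \E_\infty(u+\phi,\{\phi\neq0\})$ for each $\phi\in\W^{k,\infty}_0(\Om;\R^N)^\varobslash$. Suppose instead that for some such $\phi$, with $\mO=\{\phi\ne0\}$,
\[
\E_\infty(u+\phi,\mO)<\E_\infty(u,\mO)=:e.
\]
Set $\bfX_0:=\D^{\vec k}u$ and $\bfX_1:=\D^{\vec k}(u+\phi)$. Level-convexity \eqref{1.25} gives, for $s\in[0,1]$,
\[
\H(x,\bfX_0+s(\bfX_1-\bfX_0))\leq\max\{\H(x,\bfX_0),\H(x,\bfX_1)\}.
\]
The plan is to show that on the approximate argmax set $\mO_\e=\{\H(\cdot,\bfX_0)\geq e-\e\}$, with $\e<(e-\E_\infty(u+\phi,\mO))/2$, we have $\H_{,\bfX}(x,\bfX_0):(\bfX_1-\bfX_0)\leq -c<0$ uniformly. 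Then \eqref{1.20} on $\mO$ gives $(\mfD\E_\infty)^+_u(\phi)\leq -c<0$, contradicting (ii).

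Qualitatively, a $\C^1$ quasiconvex function satisfies $\H_{,\bfX}(x,\bfX_0):(\bfX_1-\bfX_0)\leq0$ whenever $\H(x,\bfX_1)<\H(x,\bfX_0)$. Non-degeneracy \eqref{1.24} should upgrade this to strict negativity. Indeed, $\bfX_0$ is not a minimiser there, so $\H_{,\bfX}\neq0$. If the derivative along the segment vanished, the hyperplane orthogonal to the gradient would support the convex strict sublevel set $\{\H(x,\cdot)<\H(x,\bfX_0)\}$. That sublevel set contains a neighbourhood of $\bfX_1$, so $\bfX_1$ is an interior point, which is impossible for a point on the supporting hyperplane.

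The main obstacle is making this quantitative and essentially uniform in $x$, which is needed because \eqref{1.20} is an essential supremum. A pointwise strict inequality $<0$ alone only yields $(\mfD\E_\infty)^+_u(\phi)\leq 0$, not $<0$. I will handle this in two steps.

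\begin{enumerate}
\item \emph{A uniform ball inside the sublevel set.} By \eqref{1.18}, on $\mB_R$ with $R\geq\|\bfX_0\|_\infty+\|\bfX_1\|_\infty+1$, the function $\H$ is uniformly continuous in $\bfX$, essentially uniformly in $x$. Hence there is $r>0$, independent of $x$, such that $\H(x,\cdot)<e-2\e$ on $B_r(\bfX_1(x))$ for a.e.\ $x\in\mO_\e$.
\item \emph{Tilting the direction.} Replace $\bfX_1$ by $\bfX_1\pm r\nu$ with $\nu=\H_{,\bfX}(x,\bfX_0)/|\H_{,\bfX}(x,\bfX_0)|$. The quasiconvexity inequality then gives $\H_{,\bfX}(x,\bfX_0):(\bfX_1-\bfX_0)\leq -r|\H_{,\bfX}(x,\bfX_0)|$.
\end{enumerate}

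It then remains to bound $|\H_{,\bfX}(x,\bfX_0)|$ below uniformly on $\mO_\e$. I expect this to follow from a quantitative form of \eqref{1.24}. On the segment from $\bfX_0$ to the ball around $\bfX_1$, $\H$ drops by at least $\e$. Combined with the uniform modulus of $\H_{,\bfX}$ from \eqref{1.18}, a small gradient at $\bfX_0$ would force a small drop near $\bfX_0$. Quasiconvexity along the segment, since the directional derivative is monotone in the relevant sense, should propagate this and produce a contradiction. If this propagation fails, the fallback is to perturb $\phi$ to $t\phi$ and compare with the intermediate jets $\bfX_0+s(\bfX_1-\bfX_0)$.
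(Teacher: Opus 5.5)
\textbf{Where the proof breaks.} Your (i)$\Rightarrow$(ii) argument is correct and is the paper's: reduce to $\mO=\{\phi\neq0\}$ via Proposition~\ref{Proposition20}, then apply Theorem~\ref{Theorem3} on $\mO$. For (ii)$\Rightarrow$(i), your steps are sound up to the bound $\H_{,\bfX}(x,\D^{\vec k}u):\D^{\vec k}\phi\le -r\,|\H_{,\bfX}(x,\D^{\vec k}u)|$ a.e.\ on the approximate argmax set. The last step, a positive essential lower bound for $|\H_{,\bfX}(\cdot,\D^{\vec k}u)|$ there, is a genuine gap.

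Hypothesis \eqref{1.24} is qualitative and holds only pointwise in $x$. Your ``propagation'' heuristic is false. For $a>0$, the map $s\mapsto as+s^3$ is level-convex and non-degenerate with derivative $a$ at $0$, yet it drops by at least $\frac1{27}$ on $[-\frac13,0]$ however small $a$ is. The fallback via $t\phi$ does not help either: $t\mapsto\E_\infty(u+t\phi,\mO)$ is only quasiconvex and can behave like $-t^3/27$.

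\textbf{The gap cannot be closed.} The implication itself fails under the stated hypotheses. Take $n=k=N=1$, $\Om=(-2,2)$, $u\equiv0$, and
$\H(x,(z,p)):=a(x)p+p^3-3\chi_{(-2,-\frac12)}(x)$ with $a(x):=\min\{1,|x+\frac12|\}$.
\begin{itemize}
\item \emph{Hypotheses.} \eqref{1.18}--\eqref{1.19} hold with $\om_R(s)=(1+6R+3R^2)s$. We have $\H_{,\bfX}=(0,a(x)+3p^2)\neq\mathbf 0$ for $x\neq-\frac12$, the argmin is empty, and strict sublevel sets are half-planes, so \eqref{1.24}--\eqref{1.25} hold. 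Also $\H_{,\bfX}(\cdot,\D^{\vec k}u):\D^{\vec k}\phi=a\phi'$.
\item \emph{(ii) holds.} Suppose $\mO=\{\phi\neq0\}$ meets $(-\frac12,2)$. For $\e<3$ the approximate argmax set is $\mO\cap[-\frac12,2)$. Either it contains a whole component of $\mO$, on which $\phi'>0$ on a set of positive measure because $\phi$ vanishes at its endpoints. Or $-\frac12$ is interior to a component, where $a\phi'\ge-|x+\frac12|\,\|\phi'\|_\infty$. If instead $\mO\subseteq(-2,-\frac12)$, the approximate argmax set is all of $\mO$ and the first alternative applies. In every case the essential supremum of $a\phi'$ is $\ge0$.
\item \emph{(i) fails.} Let $\phi'=1$ on $(-1,-\frac12)$, $\phi'=-\frac13$ on $(-\frac12,1)$, and $\phi=0$ off $(-1,1)$. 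Then $\E_\infty(u,(-1,1))=0$, whereas $\E_\infty(u+\phi,(-1,1))\le-\frac1{27}$.
\end{itemize}

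\textbf{The paper's proof has the same defect.} In Case~2, $\de_0$ from \eqref{7.8} depends on $\e$ through $\Theta_{\e,\phi}(u)$, yet $\e\to0^+$ is taken in \eqref{7.14} with $\de$ fixed. In the example above, $\Theta_{\e,\phi}(u)\subseteq\{a\le3\e\}$, so $\de_0(\e)\le3\e$ and $\om_R(\e/\de)\not\to0$.

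\textbf{When your scheme does work.} It goes through if $\H(x,\cdot)$ is convex. Then $\H_{,\bfX}(x,\bfX_0):(\bfX_1-\bfX_0)\le\H(x,\bfX_1)-\H(x,\bfX_0)$ is uniformly negative on the approximate argmax set; this is an alternative to the paper's separate convex proof. It also goes through under a quantitative form of \eqref{1.24}, for instance $|\H_{,\bfX}(x,\bfX)|\ge c(R,\tau)>0$ for a.e.\ $x$ whenever $|\bfX|\le R$ and $\H(x,\bfX)\ge\inf\H(x,\cdot)+\tau$.
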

In the above result, that hypothesis \eqref{1.24} needs to be assumed separately, as it does not follow from the level-convexity requirement (this is not the case for convexity, which implies \eqref{1.24} under $\C^1$ regularity). Further, no assumption on the argmin set of $\H (x,\cdot)$ is imposed, nor any coercivity is assumed, hence it may well be either empty or very large. Further, due to the monotonicity of the approximate argmax sets in $\e>0$ (recall \eqref{1.13}-\eqref{1.14} and \eqref{1.20}-\eqref{1.22}), we may rewrite the inequality in Theorem \ref{Theorem5}(ii) equivalently as
\beq
\label{1.26}
\underset{\big\{ \! \H(\cdot, \D^{\vec{k}}u) \geq \E_\infty(u,\{\phi\neq0 \}) -\e \! \big\} \cap \{\phi\neq0 \}}{\ess\sup}  \Big[ \H_{,\bfX} \big(\cdot, \D^{\vec{k}}u \big) \!: \! \D^{\vec{k}}\phi \Big] \geq 0,
\eeq
for any $\phi \in \W^{k,\infty}_0(\Om;\R^N)^\varobslash$ and $\e>0$. This reformulation might be more useful for technical applications of semi-differentiability in $\L^\infty$. In the case that $\H(x,\cdot)$ happens to be convex in $\smash{\R^{\vec M}}$, the proof of sufficiency in Theorem \ref{Theorem5} is much simpler, so we provide after the main proof in Section \ref{Section 7} a second direct proof under the convexity assumption. 

The characterisation of Theorem \ref{Theorem5} goes beyond the class of absolute minimisers. In Theorem \ref{TheoremXX} we state a corresponding version asserting that minimality of $u$ for $\E_\infty(\cdot,\mO)$ with respect to any closed admissible class $\mathfrak C \sub \W^{k,\infty}(\mO;\R^N)$ is equivalent to ${(\mfD \E_\infty(\cdot,\mO))^+_u\geq 0}$ on $\mathfrak C$, for any $\mO \sub \Om$. This version is potentially useful for the study of constrained and other related problems.

\smallskip

\noi {\bf Relation to the Aronsson equations.} As already discussed, our results imply that the non-negativity of the semi-differential is the appropriate differential characterisation of absolute minimality for supremal functionals, with Aronsson-type (systems of) PDE being a necessary only condition, except for very special cases. For the sake of illustration, in Section \ref{Section 8} we provide a direct proof that the semi-differential PDI implies the satisfaction of an Aronsson system of PDEs (Theorem \ref{Theorem26}). For simplicity, and in order to avoid technical nuances arising from concepts of generalised solutions, we establish this in the smooth case only. Particular instances of this result in the first and second order (scalar and vectorial) case have previously appeared in the works \cite{DK, K10, KP}. Their proofs were more technical; the proof presented here is streamlined. We refrain from discussing the vector-valued case, which presents several additional difficulties, see Section \ref{Section 2}.

As shown in Example \ref{Example36}, in general the Aronsson equation cannot characterise minimality in $\L^\infty$, and in the vectorial case as mentioned not even the $\infty$-Laplacian can (\cite{KS}). However, in the special cases in which this is indeed true (which happens for example if $\H$ is $\C^2$, depends only on $(x,\D u)$, is convex in the $\D u$-variable and uniformly coercive, see \cite[Theorem 1, p.\ 155]{Yu}), the \emph{semi-differential PDI provides a new characterisation of viscosity solutions to the Aronsson equation}. We state this as a corollary in the case of the $\infty$-Laplace PDE.

\begin{corollary}[Characterisation of $\infty$-Harmonic functions via the $\L^\infty$ semi-differential PDI] Let $\Om \sub \R^n$ be open, where $n\in\N$. Then, a function $u\in \W^{1,\infty}(\Om)$ is $\infty$-Harmonic, namely it is a viscosity solution to the $\infty$-Laplacian
\[
\De_\infty u : = \D u \ot \D u \!:\! \D^2u = 0, \ \ \text{ in }\Om,
\]
if and only if it satisfies
\[
\underset{ \big\{ |\D u| \geq \| \D u \|_{\L^\infty(\{\phi\neq0 \})} -\e \! \big\} \cap \{\phi\neq0 \}}{\ess\sup}  \big[  \D u  \cdot \D \phi \big] \geq 0,
\]
for any $\phi \in \W^{1,\infty}_0(\Om)^\varobslash$ and all $\e>0$. 
\end{corollary}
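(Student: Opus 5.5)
The plan is to chain together results already available: the viscosity characterisation of absolute minimisers for the model $\H(x,\mathbf X)=|\mathbf P|^2$, Theorem \ref{Theorem5}, and the reformulation \eqref{1.26}. We take $k=N=1$ and the supremand $\H(x,(z,\mathbf P)) := \tfrac12|\mathbf P|^2$, which has no $x$- or $z$-dependence. This choice is $\C^\infty$, so \eqref{1.18}--\eqref{1.19} hold trivially, with $\om_R(t)=Rt$ on $\mB_R$. It is convex, hence level-convex, so \eqref{1.25} holds. Since $\H_{,\bfX}=(0,\mathbf P)$ vanishes only where $\mathbf P=0$, i.e.\ exactly on the argmin set, \eqref{1.24} holds as well. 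Therefore Theorem \ref{Theorem5} applies in both directions: $u\in\W^{1,\infty}(\Om)$ is an absolute minimiser of $\E_\infty(u,\mO)=\tfrac12\|\D u\|^2_{\L^\infty(\mO)}$ if and only if $(\mfD\E_\infty)^+_u\ge0$.

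Next we make the PDI explicit via \eqref{1.26}. Here $\H_{,\bfX}(\cdot,\D^{\vec 1}u):\D^{\vec1}\phi=\D u\cdot\D\phi$. The superlevel set $\{\tfrac12|\D u|^2\ge \tfrac12\|\D u\|^2_{\L^\infty(\{\phi\ne0\})}-\e\}$ has to be compared with $\{|\D u|\ge\|\D u\|_{\L^\infty(\{\phi\neq0\})}-\e'\}$. Write $M=\|\D u\|_{\L^\infty(\{\phi\ne0\})}$. For each $\e'>0$ there is $\e>0$ with $\{|\D u|^2\ge M^2-2\e\}\subseteq\{|\D u|\ge M-\e'\}$, and conversely; one may take $2\e\le 2M\e'-\e'^2$ when $M>0$, and the case $M=0$ is trivial since then both sets are full. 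Because the approximate argmax sets are monotone in $\e$, as noted after \eqref{1.26}, the quantified statements "for all $\e>0$" are equivalent for the two families. So the stated PDI is exactly $(\mfD\E_\infty)^+_u\ge0$.

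Finally we invoke the classical equivalence: for $u\in\W^{1,\infty}(\Om)$, being an absolute minimiser of $\|\D u\|_{\L^\infty}$ (equivalently of $\tfrac12\|\D u\|^2_{\L^\infty}$, by monotonicity of $t\mapsto t^2/2$) is equivalent to being a viscosity solution of $\De_\infty u=0$. This is Jensen's theorem together with the Crandall--Evans--Gariepy / Aronsson results, and a special case of \cite[Theorem 1]{Yu} quoted above. One point needs care. Definition \ref{Definition21} (i) uses all open $\mO\subseteq\Om$ and $\psi\in\W^{1,\infty}_0(\mO)$, whereas the classical notion uses $\mO\Subset\Om$ and Lipschitz comparison with the same boundary values; by Proposition \ref{Proposition20} these are consistent with each other. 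I expect this matching of minimality notions and the squaring of the superlevel sets to be the only genuine obstacles; the rest is direct substitution.
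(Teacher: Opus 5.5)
Your chain (model supremand $\tfrac12|\mathbf P|^2$, then Theorem \ref{Theorem5}, then the reformulation \eqref{1.26} with the reparametrisation of superlevel sets, then the classical equivalence between absolute minimality and viscosity $\infty$-harmonicity) is what the paper has in mind. The paper states the corollary without a separate proof. Your hypothesis checks and the $\e\leftrightarrow\e'$ comparison of superlevel sets are fine, with one small fix: $\om_R(t)=Rt$ fails the second line of \eqref{1.18} when $R<1$, so take $\om_R(t)=(R+1)t$.

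There is a gap, precisely at the step you flagged, and Proposition \ref{Proposition20} does not close it. That proposition only shows that, within the notion of Definition \ref{Definition21} (all open $\mO\sub\Om$), one may restrict to $\mO=\{\psi\neq0\}$. Corollary \ref{Corollary22} does the same within the localised notion ($\mO\Subset\Om$, $\psi\in\W^{1,\infty}_c(\Om)$). Neither passes from the localised notion to minimality on arbitrary open $\mO\sub\Om$. The localised notion is all that the Jensen/Crandall--Evans--Gariepy/Yu results give you. Minimality on arbitrary open $\mO$ is what Theorem \ref{Theorem5}(i) needs in order to produce the inequality for every $\phi\in\W^{1,\infty}_0(\Om)^\varobslash$, whose non-zero set need not be compactly contained. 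No result in the paper supplies that passage (cf.\ the remark after Definition \ref{Definition21}). So the implication ``$\infty$-harmonic $\Rightarrow$ PDI'' is unproved as written. The converse direction is fine, since Definition \ref{Definition21} implies the localised notion.

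In this scalar first-order setting the gap closes with a truncation.

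First, from localised minimality, Corollary \ref{Corollary22} and the necessity argument of Theorem \ref{Theorem5} give the PDI for every $\psi\in\W^{1,\infty}_c(\Om)^\varobslash$. Concretely, apply Theorem \ref{Theorem3} on $\mO=\{\psi\neq0\}\Subset\Om$, using Claim \ref{Claim9}.

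Next, for a general $\phi\in\W^{1,\infty}_0(\Om)^\varobslash$, set $\phi_j:=\mathrm{sign}(\phi)\,(|\phi|-1/j)^+$. By \eqref{2.0} the set $\{|\phi|\geq1/j\}$ is compact in $\Om$, so $\phi_j\in\W^{1,\infty}_c(\Om)$. Moreover:
$\{\phi_j\neq0\}=\{|\phi|>1/j\}=:\mO_j\nearrow\{\phi\neq0\}$;
$\D\phi_j=\D\phi$ a.e.\ on $\mO_j$;
$M_j:=\|\D u\|_{\L^\infty(\mO_j)}\nearrow M:=\|\D u\|_{\L^\infty(\{\phi\neq0\})}$.

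Finally, given $\e>0$, choose $j$ with $\mO_j\neq\emptyset$ and $M_j\geq M-\e/2$. Then $\{|\D u|\geq M_j-\e/2\}\cap\mO_j\sub\{|\D u|\geq M-\e\}\cap\{\phi\neq0\}$, and $\D u\cdot\D\phi_j=\D u\cdot\D\phi$ a.e.\ on the smaller set. Hence the inequality for $\phi_j$ at level $\e/2$ gives it for $\phi$ at level $\e$. Since your $\H$ is convex, you may also take (ii)$\Rightarrow$(i) from the short convexity proof at the end of Section \ref{Section 7}.
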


The above characterisation could potentially be useful for regularity considerations, which is perhaps the main open problem in the scalar case (specifically, whether $\infty$-Harmonic functions are $\C^1$ for $n\geq 3$; it is known for $n=2$, see \cite{ES, ESm}). 


\ms

\section{Preliminaries and the Calculus of Variations in $\L^\infty$}
\label{Section 2}

\noi {\bf Generalities.} Our general measure theory, functional space and PDE notation is either self-explanatory, or a convex combination of standard symbolisations (e.g.\ in \cite{D, EG, KV}). We nonetheless explicitly discuss some of our conventions for the benefit of the reader. Let $n,k,N \in \mathbb N$, and let us also fix an open domain $\Omega \subseteq \mathbb R^n$. Given a mapping $u : \R^n \supseteq \Om \larrow \R^N$, the derivatives (of first, second, and $k$-th order) will be denoted by
\[
\left\{ \ \ \   \begin{split}
    \D u=\big(\D_i u_\al\big)_{i\in\{1,...,n\}}^{\al\in\{1,...,N\}}&\ :\ \Omega\larrow \R^{N \by n},\\
    \D^2u=\left(\D^2_{ij}u_\al\right)_{i,j\in\{1,...,n\}}^{\al\in\{1,...,N\}}&\ :\ \Omega \larrow  \R^{N \by n^{\ot 2}}_{\mathrm s},\\
  \vdots  \ \ \ \ \ \   \ \ \ \ \ \ & \ \ \ \ \ \ \  \ \ \ \vdots
     \\
    \D^k u=\left(\D^k_{i_1...i_k} u_\al\right)_{i_1,\ldots,i_k \in\{1,...,n\}}^{\al\in\{1,...,N\}}&\ : \ \Omega \larrow  \R^{N \by n^{\ot k}}_{\mathrm s},\ \ \ \ \ \ \ \ \ 
  \end{split}
  \right.
\]
and they are valued into their respective (symmetric) tensor spaces, defined as 
\[
  \R^{N \by n^{\ot k}}_{\mathrm s}\! :=\!\bigg\{ \! {\bf X}\in \R^N \! \ot \underbrace{\R^n\otimes\cdots\otimes\R^n\!\!}_{k \text{ times}}\ : {\bf X}_{\al i_1\cdots i_k}= {\bf X}_{\al \sigma(i_1\cdots i_k)}, \sigma\text{ permutation on }(i_1,\ldots,i_k)\! \bigg\}.
\]
The notation ``$(\cdot)^{\!\ot k}$" will be used $k$-fold symmetric tensor product in $\R^n$: 
\[
a^{\!\ot k}:= \underset{k\text{-times}}{\underbrace{a\ot \cdots \ot a}}\, \in \R^{n^k}_{\mathrm s}.
\]
For brevity, we will use the following compact notation for the ``$k$-th order jet" of $u$:
\[
\D^{\vec{k}}u := \big(u, \D u, \D^2u,...,\D^k u \big) \ : \ \Om \larrow  \mathbb R^N \times\mathbb R^{N \by n} \times \cdots  \by \mathbb R^{N \by n^{\ot k}}_{\mathrm s} . 
\]
We also set
\[
\R^{\vec M} :=  \mathbb R^N \times\mathbb R^{N \by n} \times \cdots  \by \mathbb R^{N \by n^{\ot k}}_{\mathrm s},
\]
and symbolise elements in the space $\R^{\vec M}$ as
\[
 \bfX \equiv \big(\bfX_0, \bfX_1,\ldots, \bfX_k \big), \ \ \ \bfX_j \in \mathbb R^{N \by n^{\ot j}}_{\mathrm s},\ j=0,\ldots,k.
\]
Given a function $\H :\Omega\times  \R^{\vec M}  \longrightarrow \mathbb R$, we will symbolise the arguments of $\H$ throughout this work as $\H(x, \textbf X)$, and the derivative with respect to the second variable will be denoted by $\H_{,\bfX}(x, \textbf X)$. Such a function $\H$ is called Carath\'eodory if $\H(\cdot, \textbf X)$ is Lebesgue measurable for all $\bfX \in \smash{\R^{\vec M}}$ and $\H(x, \cdot)$ is continuous for a.e.\ $x\in\Om$. The Euclidean (Frobenius) inner products on $\smash{\R^{N \by n^{\ot k}}_{\mathrm s}}$ and on $\smash{\R^{\vec M}} $ will be denoted by $\bfX_k\! :\!  \mathbf Y_k$ and $\bfX \! :\! \mathbf Y$ respectively,  whilst the Euclidean norm will be denoted by $| \cdot |$ in all dimensions. The $n$-dimensional Lebesgue and  the $s$-dimensional Hausdorff measure on $\R^n$ will be denoted by $\mL^n$ and $\mH^s$ respectively. Open balls of radius $\rho$ and centre $x$ in a metric space $(\Om, d)$ will be denoted by $\mB_\rho(x)$. If the metric space is also a vector space, we will write $\mB_\rho(0)\equiv \mB_\rho$. The disjoint union of sets will be denoted by ``$\biguplus$". Further, for any set $X\neq\emptyset$, we will symbolise the set of all function $f : X\larrow \R$ by
\[
\R^X := \big\{ f \, |\, f : X \larrow \R \big\}.
\]
Finally, for any vector space $\mathfrak W$, we will use the notation 
\[
\mathfrak W^\varobslash := \mathfrak W \set\{0\}
\]
to symbolise the punctured space with the origin deleted.
\smallskip

\noi {\bf The spaces $\W^{k,\infty}(\Om;\R^N)$ and $\W^{k,\infty}_0(\Om;\R^N)$.} In this work we will use systematically the $k$-th order Sobolev spaces $\W^{k,\infty}(\Om;\R^N)$ and $\W^{k,\infty}_0(\Om;\R^N)$, where the domain $\Om \sub \R^n$ is \emph{not} assumed to be bounded. Since these spaces are perhaps not very common, we discuss now some of their properties. $\W^{k,\infty}(\Om;\R^N)$ is a dual Banach space which can be isometrically embedded into a cartesian product of copies of $\L^\infty(\Om)$ via the map $\smash{u \mapsto  \D^{\vec k}u}$. For convergence purposes, we will specify the norm
\[
\| u \|_{\W^{k,\infty}(\Om;\R^N)} := \big\| \D^{\vec k}u \big\|_{\L^\infty(\Om)}.
\]
$\W^{k,\infty}_0(\Om;\R^N)$ is defined as the closure in the weak*-$\W^{k,\infty}$ topology of the subspace $\C^{\infty}_\c(\Om;\R^N)$  $\sub \W^{k,\infty}(\Om;\R^N)$. This means $ u \in \W^{k,\infty}_0(\Om;\R^N)$ if and only if there exists a sequence $(u_j)_1^\infty \sub \C^{\infty}_\c(\Om;\R^N)$ such that $u_j \larrow u$ in $\W^{k-1,\infty}(\Om;\R^N)$ and also $\D^k u_j \weakstar \D^k u$ in $\L^\infty(\Om;\R^{N\by n^{\ot k}}_{\mathrm s})$, as $j\to\infty$. By Morrey's theorem, we have the inclusion  $\W^{k,\infty}(\Om;\R^N) \sub \C^{k-1}(\Om;\R^N)$, thus the $(k-1)$-order jet
\[
\D^{\vv{k-1\,}}u =\big(u,\D u, \ldots, \D^{k-1}u \big) 
\]
is continuous on $\Om$, when $u\in \W^{k,\infty}(\Om;\R^N)$. We also have the inclusion of spaces
\beq
\label{2.0}
\W^{k,\infty}_0(\Om;\R^N) \sub \C^{k-1}_0(\Om;\R^N), 
\eeq
where, for any $\ell \in \N$, 
\[
\C^{\ell}_0(\Om;\R^N) := \Big\{ u\in \C^{\ell}(\Om;\R^N) \, : \, \forall\, \e>0,\ \big\{ |\D^{\vec \ell}u | \geq \e \big\}\text{ is compact in }\Om\Big\}.
\]
The inclusion \eqref{2.0} can be seen as follows: fix  $ u \in \W^{k,\infty}_0(\Om;\R^N)$, and let $(u_j)_1^\infty \sub \C^{\infty}_\c(\Om;\R^N)$ be such that $u_j \larrow u$ in $\W^{k-1,\infty}(\Om;\R^N)$ as $j\to\infty$. For any $\e>0$, we choose $j\geq j(\e)$ such that
\[
\e > \big\| \D^{\vv{k-1\,}}u_j -\D^{\vv{k-1\,}}u \big\|_{\L^\infty(\Om)}  \geq \big\| \D^{\vv{k-1\,}}u \big\|_{\L^\infty(\Om \set \supp(u_j))} = \sup_{\Om \set \supp(u_j))} \big| \D^{\vv{k-1\,}}u \big|.
\]
(For the last equality above we used Remark \ref{Remark17}.) This implies that 
\[
\Om \set \supp(u_j) \sub \big\{ \big| \D^{\vv{k-1\,}}u \big| < \e \big\}, 
\]
which is equivalent to $\big\{ \big| \D^{\vv{k-1\,}}u \big| \geq \e \big\} \sub \supp(u_j),$ therefore establishing that $u\in \C^{k-1}_0(\Om;\R^N)$. It is easy to see that for any point $x\in\p\Om$, the $(k-1)$-order jet has a vanishing Lebesgue value at this point. Indeed, fix $x\in\p\Om$, $\e>0$, and choose $j\geq j(\e)$ as above. Since $ \supp(u_j)$ is compact in $\Om$, we can find $\rho=\rho(\e)>0$ small such that $\mB_\rho(x) \sub \Om \set \supp(u_j) $. This yields
\[
\av_{\mB_\rho(x)\cap \Om}  \big| \D^{\vv{k-1\,}}u \big| \, \mathrm d \mL^n \leq \big\| \D^{\vv{k-1\,}}u \big\|_{\L^\infty(\mB_\rho(x)\cap \Om)} <\e.
\]
In particular, the extension by zero on $\R^n\set \Om$ provides a canonical inclusion $ \W^{k,\infty}_0(\Om;\R^N) \sub \W^{k,\infty}_0(\R^n;\R^N)$. Let us finally record for subsequent use the following useful property.
\begin{claim}
 \label{Claim9} 
Fix $\Om \sub \R^n$ open and $\psi \in \W^{k,\infty}(\Om;\R^N)^\varobslash$. Then, we have that $\psi \in \W^{k,\infty}_0(\Om;\R^N)$ if and only if $\psi \in \W^{k,\infty}_0(\{\psi \neq0\};\R^N)$. In either case, we have $\D^{\vec k} \psi =0$ a.e.\ on $\Om \set \{\psi \neq 0\}$.
\end{claim}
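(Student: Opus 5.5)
Write $U:=\{\psi\neq 0\}$. Since $\psi\in\W^{k,\infty}(\Om;\R^N)\sub\C^{k-1}(\Om;\R^N)$ by Morrey's theorem, $U$ is an open subset of $\Om$, and it is non-empty because $\psi\neq0$. The plan has three steps: first the "in either case" assertion, then the "if" direction, then the "only if" direction, which is the real content.

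\textbf{The vanishing of $\D^{\vec k}\psi$ off $U$.} This uses no boundary condition. On $\Om\set U$ we have $\psi=0$ everywhere. By the classical fact that $\D w=0$ a.e.\ on $\{w=0\}$ for $w\in\W^{1,\infty}_{\mathrm{loc}}$, applied iteratively to the components of $\psi,\D\psi,\ldots,\D^{k-1}\psi$, we get $\D^{j}\psi=0$ a.e.\ on $\Om\set U$ for every $j\le k$. At step $j$ we apply the fact to $w=\D^{j-1}\psi$, which lies in $\W^{1,\infty}_{\mathrm{loc}}$ and vanishes a.e.\ on $\Om\set U$. Each step loses only a null set, and the set where $\D^{j-1}\psi=0$ a.e.\ is $\Om\set U$ up to a null set.

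\textbf{The direction ($\Leftarrow$).} Suppose $\psi\in\W^{k,\infty}_0(U;\R^N)$. Take $(u_j)\sub\C^\infty_\c(U;\R^N)\sub\C^\infty_\c(\Om;\R^N)$ with $u_j\to\psi$ in $\W^{k-1,\infty}(U)$ and $\D^ku_j\weakstar\D^k\psi$ in $\L^\infty(U)$. Extending by zero, which is harmless by the previous step, gives the same convergences on $\Om$. Hence $\psi\in\W^{k,\infty}_0(\Om;\R^N)$.

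\textbf{The direction ($\Rightarrow$).} Given $\psi\in\W^{k,\infty}_0(\Om;\R^N)$, we must build approximants compactly supported inside $U$, not just inside $\Om$. I would use a truncation $\psi_\de:=\theta_\de(\psi)$, with $\theta_\de:\R^N\to\R^N$ smooth, $\theta_\de=0$ on $\mB_\de$, $\theta_\de(y)=y$ for $|y|\ge2\de$, and $\D^m\theta_\de=O(\de^{1-m})$. The better option is probably a cutoff of the form $\eta_\de\big(|\D^{\vv{k-1\,}}\psi|^2\big)\psi$, which lets the Fa\`a di Bruno / Leibniz terms be controlled using that $\D^{\vv{k-1\,}}\psi$ is small where the cutoff varies. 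By \eqref{2.0}, $\psi\in\C^{k-1}_0(\Om;\R^N)$, so $\{|\psi|\ge\de\}$ is compact in $\Om$ and contained in $U$. Hence $\supp\psi_\de\Subset U$. Mollifying $\psi_\de$ at scale smaller than $\dist(\supp\psi_\de,\p U)$ then gives elements of $\C^\infty_\c(U)$. It then remains to show $\psi_\de\to\psi$ in $\W^{k-1,\infty}(U)$ and $\D^k\psi_\de\weakstar\D^k\psi$ with $\sup_\de\|\D^k\psi_\de\|_\infty<\infty$, followed by a diagonal argument.

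\textbf{The main obstacle.} The hard part is the uniform $\W^{k,\infty}$ bound on the truncations near $\p U$ when $k\ge2$. The chain rule produces terms like $\D^m\theta_\de(\psi)\,[\D\psi]^{\ot m}$ with coefficient $\de^{1-m}$. These terms live on $\{\de<|\psi|<2\de\}$, so one needs $|\D\psi|^m\lesssim\de^{m-1}$ there. This should follow from a Taylor/Glaeser-type interpolation: where a $\C^{k-1,1}$ function vanishes at a nearby point of $\p U$, its lower derivatives are controlled by powers of $|\psi|$. If this interpolation estimate proves too delicate, the fallback is a cutoff in terms of $\dist(\cdot,\p U)$. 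That fallback uses that all of $\D^{\vv{k-1\,}}\psi$ vanishes on $\p U\cap\Om$ (by continuity and the first step) together with Taylor's theorem, giving $|\D^j\psi(x)|\lesssim\dist(x,\p U)^{k-j}$, which exactly compensates the $\dist^{-(k-j)}$ growth of the cutoff's derivatives. I would try this distance-based route first.
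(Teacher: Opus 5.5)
\textbf{The ($\Rightarrow$) direction fails for $k\geq 2$.} Your first step and your proof of ($\Leftarrow$) are correct. The first step is the a.e.\ vanishing of $\D^{\vec k}\psi$ on $\Om\set U$, and you rightly note it needs no boundary condition. For $k=1$ your truncation $\theta_\de(\psi)$ does prove ($\Rightarrow$).

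For $k\geq 2$, however, your argument hinges on the claim that the whole jet $\D^{\vv{k-1\,}}\psi$ vanishes on $\p U\cap\Om$ ``by continuity and the first step''. This is false. Continuity upgrades the a.e.\ vanishing on $\Om\set U$ only at points every neighbourhood of which meets $\Om\set U$ in positive measure. It gives nothing on Lebesgue-null parts of $\p U$, such as transversal zeros of $\psi$.

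Concretely, let $n=N=1$, $k=2$, $\Om=(-1,1)$, and $\psi\in\C^\infty_\c(\Om)$ with $\psi(x)=x$ on $(-\frac12,\frac12)$. Then $\psi\in\W^{2,\infty}_0(\Om)$, $0\in\p U$, and $\psi'(0)=1$. Every $\phi\in\C^\infty_\c(U)$ vanishes on a neighbourhood of $0$, so $\|\phi'-\psi'\|_{\L^\infty(U)}\geq 1$ and $\psi\notin\W^{2,\infty}_0(U)$.

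This persists for the genuine weak* closure. Take $\eta$ smooth with $\eta(0)=1$ and $\eta=0$ near $\frac12$. The functional $v\mapsto v'(0^+)=-\int_0^{1/2}(v''\eta+v'\eta')\,\mathrm dx$ is weak* continuous on $\W^{2,\infty}(U)$, vanishes on $\C^\infty_\c(U)$, and equals $1$ at $\psi$.

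So ($\Rightarrow$) is false for $k\geq 2$, and no choice of cutoff can rescue it. The same example refutes your Glaeser-type interpolation, since $|\psi'|=1$ on $\{\de<|\psi|<2\de\}$; Glaeser's inequality needs a sign.

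\textbf{The paper's proof has the same gap, and what can be salvaged.} The paper's proof contains exactly this step (``Morrey's theorem implies \ldots\ $\D^{\vv{k-1\,}}\psi\equiv 0$ on $\R^n\set\{\psi\neq 0\}$''). It then simply asserts membership in $\W^{k,\infty}_0(\{\psi\neq0\};\R^N)$ without constructing approximants. As stated, the claim holds only for $k=1$.

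What your distance-based route does prove is the version with $U$ replaced by the open set $V:=\{\D^{\vv{k-1\,}}\psi\neq 0\}\supseteq U$. The argument runs as follows.
\begin{enumerate}
\item The whole $(k-1)$-jet vanishes on $\Om\set V$ by definition, and for the zero extension it also vanishes on $\R^n\set\Om$ by \eqref{2.0}.
\item Taylor's theorem then gives $|\D^j\psi|\leq C\,\dist(\cdot,\R^n\set V)^{k-j}$ for $j\leq k-1$.
\item A cutoff at scale $\de$ in the distance, combined with a cutoff at infinity (again using \eqref{2.0}), produces approximants bounded in $\W^{k,\infty}$ and converging in $\W^{k-1,\infty}$.
\end{enumerate}
By your first step $V\set U$ is Lebesgue-null, so $\E_\infty(\cdot,V)=\E_\infty(\cdot,U)$. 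This corrected statement therefore suffices for the use made of the claim in Proposition~\ref{Proposition20}.
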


\noi {\bf Proof of Claim} \ref{Claim9}. Fix any $\psi \in \W^{k,\infty}_0(\Om;\R^N)^\varobslash$. Then, $\{\psi \neq 0\}$ is a non-empty open subset of $\Om$. Extending $\psi$ by zero from $\Om$ to $\R^n$, we have $\psi \in \W^{k,\infty}_0(\R^n;\R^N)$, and that $\psi \equiv 0$ on $\R^n \set \{\psi \neq 0\}$. By Sobolev space theory, we have $\D^{\vec k} \psi =0$ a.e.\ on $\R^n \set \{\psi \neq 0\}$. Morrey's theorem implies that the $(k-1)$-order jet of $\psi$ is continuous on $\R^n$, and therefore
\[
\D^{\vv{k-1\,}}\psi  \equiv 0 \ \text{ on } \ \R^n \set \{\psi \neq 0\}.
\]
The above implies that $\psi \in \W^{k,\infty}_0(\{\psi \neq0\};\R^N)$. The reverse implication is obvious. \qed

\ms

\noi {\bf Semi-differentiability.} Let now $\mX$ be a topological vector space, and $f : \mX \larrow \R$ a given functional. The upper/lower/left/right Gateaux semi-derivatives of $f$ at $u\in \mX$ in the direction of $\phi \in \mX$ (namely the four Dini numbers), are defined by
\[
\begin{split}
(\underline{\mfD} f)^\pm_u(\phi) := \liminf_{t \to 0^\pm} \frac{f(u+t\phi)-f(u)}{t}, \ \ \ (\underline{\mfD} f)^\pm_u \ : \ \mX \larrow [-\infty,+\infty],
\\
(\overline{\mfD}  f)^\pm_u(\phi) := \limsup_{t \to 0^\pm} \frac{f(u+t\phi)-f(u)}{t}, \ \ \ (\overline{\mfD} f)^\pm_u \ : \ \mX \larrow [-\infty,+\infty].
\end{split}
\]
Clearly, the functionals $(\underline{\mfD} f)^+,(\overline{\mfD} f)^+,(\underline{\mfD} f)^-,(\overline{\mfD} f)^-$ exist always, but may not coincide. If  $(\underline{\mfD} f)_u^+=(\overline{\mfD} f)_u^+$, we will denote the common value by  $({\mfD} f)_u^+$ and call it the right semi-derivative (or semi-differential) at $u$. Similarly, if  $(\underline{\mfD} f)_u^-=(\overline{\mfD} f)_u^-$, we will denote the common value by  $({\mfD} f)_u^-$ and call it the left semi-derivative (or semi-differential) at $u$. If further $({\mfD} f)_u^-=({\mfD} f)_u^+$, then $f$ is Gateaux differentiable at $u$. In general, we only have the identity $({\mfD} f)_u^-=-({\mfD} f)_u^+(- \, \cdot)$. Let us also recall for later use two related but stronger notions of semi-differential from the literature (see e.g.\ \cite{De1, De2, De3}). $f$ is called (right) {\it Penot-semi-differentiable (also known as $M$-semi-differentiable) at $u$ in the direction $\phi$} if
\beq
\label{2.1}
({\mfD} f)^+_u(\phi) = \underset{\psi \to \phi \text{ in }\mX}{\lim_{t \to 0^+}} \frac{f(u+t\psi)-f(u)}{t}.
\eeq
$f$ is called (right) {\it Hadamard-semi-differentiable at $u$ in the direction $\phi$} if
\beq
\label{2.2}
({\mfD} f)^+_u(\phi) = \lim_{t \to 0^+} \frac{f(u(t))-f(u(0))}{t},
\eeq
for any path $u : [0,t_0) \larrow X$, satisfying that $u(0)=u$, $u'(0^+)=\phi$, $t_0>0$. (Note that both these notions are different from the Clarke semi-derivative, in which the limit is taken in $u$, rather than in $\phi$, see \cite{Cl}). The meaning of \eqref{2.1}-\eqref{2.2} is that the Gateaux semi-derivative exists in the stronger sense stated by the respective right hand side limit. 

\smallskip

\noi {\bf Metric measure spaces.} For the material in Section \ref{Section 4}, we will need to use metric measure spaces with some additional structure, which we discuss now. More details on these spaces can be found e.g.\ in \cite{F, Ma,Pr,R}. Let \emph{$(\Om,d,\mathfrak L,\la)$ be a Borel regular metric measure space}. This means $d$ is a is metric (distance) on $\Om$ and $(\Om,\mathfrak L,\la)$ forms a measure space, such that the $\si$-algebra $\mathfrak L$ contains the Borel $\si$-algebra $\mathcal B(\Om)$, generated by the metric topology of $\Om$, and $\la$ has the Borel regularity property (every subset of $\Om$ is contained into a Borel set with the same measure). We will further need the \emph{differentiation property} to be valid, namely the Lebesgue-Besicovitch differentiation result holds true: for any $f\in \L^1_{\mathrm{loc}}(\Om,\la)$, we have
\beq
\label{2.5A}
\ \ \ \ \ \ f(x) = \lim_{\rho\to0} \frac{1}{\la(\mB_\rho(x))} \int_{\mB_\rho(x)} f\, \mathrm d \la, \ \ \ \la \text{-a.e. }x\in\Om. 
\eeq
We understand also that the \emph{differentiation property subsumes  the property that the measure is supported everywhere on $\Om$}, namely $\supp(\la)=\Om$. This is equivalent to the requirement that non-empty open sets in $\Om$ (and hence balls) have positive $\la$-measure, so in particular the fraction in \eqref{2.5A} is always well-defined. 

The archetypal example of a metric measure space with these properties is the Euclidean space endowed with an everywhere-supported Radon measure, but there exist simple sufficient conditions guaranteeing the differentiation property on general metric spaces without any linear structure (eg.\ Vitali measures, doubling measures, see \cite{R} and \cite{Ma,Pr}). Equality \eqref{2.5A} has some important consequences. By choosing $f=\chi_A$, the characteristic function of a set $A \in \mathfrak L$, we have
\beq
\label{2.6A}
\lim_{\rho\to0} \mathcal D_\rho (\la,A,x) = 
\left\{ 
\begin{array}{ll}
 1, & \la \text{-a.e. }x\in A, \phantom{\Big]}
\\
0, &  \la \text{-a.e. }x\in \Om \set A,
\end{array} \ \ \ \ \ \mD_\rho (\la,A,x) := \frac{\la(\mB_\rho(x)) \cap A)}{\la(\mB_\rho(x))} .
\right.
\eeq
This allows to define the \emph{measure-theoretic closure, interior and boundary of any $A \in \mathfrak L$}: 
\beq
\label{2.7A}\left\{ \ \ \ \ 
\begin{split}
\overline{A}^\la &:= \Big\{x\in \Om :  \limsup_{\rho\to0} \mD_\rho (\la,A,x)>0\Big\},
\\
A^{\la \circ} &:= \Big\{x\in \Om :  \lim_{\rho\to0} \mD_\rho (\la,A,x)  =1 \Big\},
\\
\p^\la A &:= \overline{A}^\la \set A^{\la \circ}.
\end{split}
\right.
\eeq
By \eqref{2.6A}-\eqref{2.7A}, every set $A \in \mathfrak L$ differs by its measure-theoretic interior and closure by at most a $\la$-nullset, and the measure-theoretic boundary is a $\la$-nullset as well. For convenience we might on occasion use the modifier ``$\la$-" instead of saying ``measure-theoretic", which also removes any ambiguity on the measure used. The relations of $\overline{A}$$^\la$, $A^{\la \circ} $ to their topological counterparts are as follows
\beq
\label{2.8A}
\overline{A}^\la \sub \overline{A}, \ \ \ A^{\circ} \sub A^{\la \circ},
\eeq
with the inclusions in general being strict. To see \eqref{2.8A}, fix $x\in A^{\circ} \sub A$. Then, there exists a ball $\mB_\rho(x) \sub A$, which implies that $\la (\mB_\rho(x) \cap A) =\la (\mB_\rho(x))$, whence $\mD_\rho(\la,A,x)=1$ and thus $x\in A^{\la \circ}$. If $x\in\Om\set \overline{A}$, there exists a ball $\mB_\rho(x) \sub \Om\set \overline{A} \sub \Om \set A$, which implies $\la (\mB_\rho(x) \cap A) =0$, whence $\mD_\rho(\la,A,x)=0$ and thus $x\in \Om \set \overline{A}$$^\la$. 

\begin{remark}[Supremum vs essential supremum] \label{Remark17} If the measure $\la$ is supported everywhere on $\Om$, then the supremum and the essential supremum coincide for continuous functions. Due to its significant role herein, we do provide a simple proof of this fact. Since $f \leq \sup_\Om f$ everywhere on $\Om$, we readily have
\[
\la \text{-} \underset{\Om}{\ess \sup} \, f  \leq \sup_\Om f.
\]
Conversely, by the definition of the essential supremum, we have 
\[
\la \text{-} \underset{\Om}{\ess \sup} \, f  \geq t, \ \ \ \forall\, t\in \R : \ \la(\{f>t\})>0.
\]
We set $T(\de):=\sup_\Om f -\de$, for $\de>0$. If $\de$ is small enough, the continuity of $f$ implies that the open set $\{f > {\sup}_\Om f -\de\}$ is non-empty in $\Om$. Then, since $\supp (\la) =\Om$, we have that $\la\big(\{f > {\sup}_\Om f -\de\}\big) >0$. This implies for the choice of $t=T(\de)$ that
\[
\la \text{-} \underset{\Om}{\ess \sup} \, f  \geq T(\de)= \sup_\Om f -\de.
\]
We conclude by letting $\de\to 0^+$.
\end{remark}

\noi {\bf Approximate limits, approximate continuity.} Continuing in the same context of Borel regular metric measures spaces, let $f : \Om \larrow \R$ be $\la$-measurable. For any $x\in \Om$, the \emph{approximate limsup of $f$ at $x$} is defined as
\beq
\label{2.9A}
\la\textrm{-ap}\limsup_{y\to x} f(y) := \sup\Bigg\{ t\in \R \, :\,  \limsup_{\rho\to0}  \frac{\la \big(\mB_\rho(x)) \cap \{f \geq t\} \big) }{\la(\mB_\rho(x))}>0\Bigg\}.
\eeq
Similarly, 
the \emph{approximate limit of $f$ at $x$} is defined as
\beq
\label{2.10A}
\la\textrm{-ap}\lim_{y\to x} f(y) :=L \ \ \iff\ \  \forall \, \e>0, \ \ \ \lim_{\rho\to0}  \frac{\la \big(\mB_\rho(x)) \cap \{|f-L| \geq \e \} \big) }{\la(\mB_\rho(x))}=0.
\eeq
Approximate limits are unique, if they exist. $f$ is called \emph{approximately continuous at $x$} if
\[
\la\textrm{-ap}\lim_{y\to x} f(y) = f(x).
\]
In the Euclidean space (see e.g.\ \cite{EG,F}), a.e.\ approximate continuity is equivalent to measurability of a function.

\smallskip

\noi {\bf Elements of the Calculus of Variations in $\L^\infty$.} We collect some rudiments on the properties of supremal functionals, related ideas and some new results and observations, only inasmuch as they are needed for the subsequent developments in Sections \ref{Section 6} and \ref{Section 7}. Let $n,N,k\in \N$ and let $\Om\sub \R^n$ be an open set, and consider the supremal functional \eqref{1.6}  arising from a Carath\'eodory function $\H : \Om \by \smash{\R^{\vec M}} \larrow \R$, namely
\[
\left\{  \ \ \ \ 
\begin{split}
\E_\infty & \  : \ \W^{k,\infty}(\Om;\R^N) \by \mL(\Om) \larrow \R,
\\
&\E_\infty(u,\mO) := \underset{\mO}{\ess\sup}\, \H \big(\cdot, \D^{\vec{k}}u \big) .
\end{split}
\right. \ \ \ \
\]
As mentioned in the introduction, supremal functionals where first studied by Aronsson in the 1960s (in the case of $k=N=1$), motivated by the problem of optimising Lipschitz extensions, due to the relation of the $\L^\infty$ norm of the gradient functional 
\beq
\label{5.1}
\I_\infty \ : \ \W^{1,\infty}(\Om;\R^N) \by \mL(\Om) \larrow \R , \ \ \ \ \I_\infty(u,\mO) := \underset{\mO}{\ess \sup} \,|\D u|,
\eeq
to the Lipschitz constant (when $N=1$). The functional $\I_\infty$ is the archetypal model of $\L^\infty$ functionals, and has been studied extensively. The scalar case is largely well understood. The vectorial case, however, is much more complicated, and the theory is still evolving. Despite the energetic activity in the area, many important problems remain open.

The emergence of the second, subdomain variable in the definition of supremal functionals is essential. Unlike integral functionals, a major hindrance is that global minimisers generally fail to minimise on subdomains with respect to their own boundary values. The latter locality property is automatic for integral functionals. A further issue closely related to the lack of locality is that global minimisers of supremal functionals are generally not unique either, not even for the simplest strictly convex supremands.

\begin{example} \label{Example20} Consider \eqref{5.1} for $n=1$ and $\Om=(-2,0)\cup(0,2)$. Let us take
\[
u^+(x):=\max\{1-|x+1|,x\}, \ \ \ u^0(x):=\max\{0,x\}, \ \ \ u^-(x):=|x+1| -1.
\]
Then, all three functions $u^\pm,u^0$ are global minimisers of $\I_\infty(\cdot, \Om)$ in the space $\big\{ u\in \W^{1,\infty}(\Om) : u(-2)=0, u(0)=0, u(2)=2 \big\}$, but neither $u^+$ nor $u^-$ minimise $\I_\infty(\cdot, \mO)$ when $\mO =(-2,0)$ for their own boundary values (i.e.\ for $u(-2)=0,u(0)=0$). Instead, the unique minimiser of $\I_\infty(\cdot, \mO)$ for these values is $u^0$ only.
\end{example}
The above feature of supremal functionals has led to the following trademark minimality concept, which restores the desired locality by building it into the minimality notion:
\begin{definition}[Absolute minimisers]
\label{Definition21}
Consider the supremal functional \eqref{1.6}, and a given mapping $u\in \W^{k,\infty}(\mO;\R^N)$. We say that $u$ is an absolute minimiser of $\E_\infty$ on $\Om$ when
\[
\ \ \E_\infty(u,\mO) \leq \E_\infty(u+\phi,\mO), \ \ \ \ \forall\,\, \mO \sub \Om \text{ open, }\ \forall\,\, \phi \in \W^{k,\infty}_0(\mO;\R^N).
\]
\end{definition}

\begin{remark} (i) Definition \ref{Definition21} is rather standard in the area, but it creates more problems that it actually solves, as it throws out the direct method of the Calculus of Variations \cite{D}. Absolute minimisers in general cannot be obtained in this way because minimising sequences may converge subsequentially to \emph{any} of the global minimisers. Alternative methods are needed, and the standing idea is to approximate $\L^\infty$ functionals by the corresponding integral $\L^p$ functionals. By studying the limit as $p\to\infty$, the expectation is that, since integral functionals have the desired locality properties, the approximation method will select in the limit the correct object, namely a global absolute minimiser (more on this method below). 

\smallskip

\noi (ii) Definition \ref{Definition21} is at slight variance to other ``localised" versions of this notion in the literature, which often require the minimality condition to be satisfied only on compactly contained $\mO\Subset \Om$, rather than all open $\mO \sub \Om $. The benefit of our slightly stronger version is that it allows to combine into a single definition absolute plus global minimisers, as typically the former are constructed for given boundary data by selecting one of the latter. In the scalar case for a bounded domain plus structural assumptions, absolute minimisers are unique (see e.g.\ \cite{ACJS, BJW1, C1}), hence the two notions coincide, but in general this is not so clear. Anyhow, for the results established herein, it makes no difference in the proofs which version of absolute minimality is used, and Definition \ref{Definition21} is preferable as it is more suitable for our purposes.
\end{remark}

The result below reformulates Definition \ref{Definition21} in a very useful fashion, in that it allows the two-parameter family of subdomains and variations to be reduced to a single parameter family of variations only, where the relevant open subsets are their own non-zero sets. In other words, Proposition \ref{Proposition20} below says that we \emph{may discard compactly supported variations on subdomains altogether and consider only those vanishing on the boundary, as they do not contribute anything to the minimality notion in $\L^\infty$}. This observation appears to be new in the literature. Interestingly though, in the very recent work \cite{CKM1} compactly supported variations are isolated and studied in some detail, demonstrating some of their striking features.

\begin{proposition}[Equivalent formulations for absolute minimisers] \label{Proposition20} Let $u \in \W^{k,\infty}(\Om ;\R^N)$ be given. Consider the following statements for the supremal functional \eqref{1.6}:
\begin{enumerate}
\item For any open $\mO \sub \Om$ and any $\phi \in \W^{k,\infty}_0(\mO;\R^N)$, we have
\[
\E_\infty(u,\mO) \leq \E_\infty(u+\phi,\mO). 
\]
\item For any $\psi \in \W^{k,\infty}_0(\Om;\R^N)^\varobslash$, we have
\[
\E_\infty\big(u,\{\psi \neq 0\}\big) \leq \E_\infty\big(u+\psi,\{\psi \neq 0\}\big) .
\]
\item For any $\psi \in \W^{k,\infty}_0(\Om;\R^N)^\varobslash$, we have
\[
\E_\infty\big(u, \supp(\psi)\cap\Om\big) \leq \E_\infty\big(u+\psi, \supp(\psi)\cap\Om \big) .
\]
\end{enumerate}
Then (1) and (2) are equivalent, whilst they both imply (3). Conversely, (3) implies (1) and (2) if either $u,\psi \in \C^k(\overline{\Om};\R^N)$, or we restrict all classes of variations $\phi, \psi \in \W^{k,\infty}_0(\Om;\R^N)^\varobslash$ only to those which satisfy $\mL^n(\p\{\phi \neq 0\})=\mL^n(\p\{\psi \neq 0\})=0$.
\end{proposition}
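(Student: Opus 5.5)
The plan is to prove the implications in the order (1)$\Rightarrow$(2), (2)$\Rightarrow$(1), (2)$\Rightarrow$(3), and finally the conditional converse (3)$\Rightarrow$(2), using Claim \ref{Claim9} and the locality of $\D^{\vec k}$ on zero sets as the main tools.

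\textbf{(1)$\Rightarrow$(2).} Given $\psi \in \W^{k,\infty}_0(\Om;\R^N)^\varobslash$, set $\mO:=\{\psi\neq 0\}$. This set is open because $\psi$ is continuous by Morrey's theorem, and it is nonempty. By Claim \ref{Claim9}, $\psi \in \W^{k,\infty}_0(\mO;\R^N)$, so (1) with $\phi=\psi|_{\mO}$ gives (2) directly.

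\textbf{(2)$\Rightarrow$(1).} Fix an open $\mO\sub\Om$ and $\phi\in \W^{k,\infty}_0(\mO;\R^N)$; if $\phi=0$ there is nothing to prove. Extend $\phi$ by zero to $\psi\in\W^{k,\infty}_0(\Om;\R^N)$, using the canonical inclusion discussed above. Then $\{\psi\neq0\}\sub \mO$. By Claim \ref{Claim9}, $\D^{\vec k}\psi=0$ a.e.\ on $\mO\set\{\psi\neq0\}$, so $\H(\cdot,\D^{\vec k}(u+\psi))=\H(\cdot,\D^{\vec k}u)$ a.e.\ there. Split $\mO=\{\psi\neq 0\}\cup(\mO\set\{\psi\neq0\})$, so that $\E_\infty(\cdot,\mO)$ is the maximum of the essential suprema over the two pieces. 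On the first piece (2) gives the inequality, and on the second piece the two quantities coincide. Taking the maximum yields (1).

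\textbf{(2)$\Rightarrow$(3).} Write $S:=\supp(\psi)\cap\Om$, which contains $\{\psi\neq0\}$. On $S\set\{\psi\neq0\}$ we again have $\D^{\vec k}\psi=0$ a.e., so the same max-splitting argument gives (3).

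\textbf{(3)$\Rightarrow$(2), the main obstacle.} The difference between $S$ and $\{\psi\neq 0\}$ is the set $\partial\{\psi\neq0\}\cap\Om$, up to interior zeros. In the restricted class where $\mL^n(\p\{\psi\neq0\})=0$, the sets $S$ and $\{\psi\neq 0\}$ differ only by a null set, so their essential suprema agree. Hence (3) reads exactly as (2), and combining with (2)$\Rightarrow$(1) gives (1) within that class.

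In the $\C^k(\overline\Om)$ case, essential suprema of the continuous functions $\H(\cdot,\D^{\vec k}u)$ and $\H(\cdot,\D^{\vec k}(u+\psi))$ equal pointwise suprema over open sets, by Remark \ref{Remark17}. This uses Carath\'eodory continuity, for which I would assume continuity of $\H$ as well. The plan is to show that the supremum over $\{\psi\neq0\}$ dominates the supremum over points of $\supp\psi$. On $\partial\{\psi\neq0\}$ the jet of $\psi$ vanishes: it vanishes on the interior of the zero set, and by continuity of $\D^{k}\psi$ it vanishes on the closure of the set where $\psi=0$ with density. Consequently the two integrands coincide at boundary points, and every boundary point is a limit of points of $\{\psi\neq0\}$. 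It follows that $\sup_{S}\H(\cdot,\D^{\vec k}w)=\sup_{\{\psi\neq0\}}\H(\cdot,\D^{\vec k}w)$ for both $w=u$ and $w=u+\psi$. The delicate point is justifying that the essential supremum over $S$ equals the pointwise supremum when $S$ is not open; I would handle this by working with essential suprema over the open set $\{\psi\neq0\}$ together with the observation that $S\set\{\psi\neq0\}$ contributes values already approximated from inside. I expect this continuity-based boundary comparison to be the main point requiring care.
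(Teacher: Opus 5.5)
Your argument follows the paper's proof essentially step for step: Claim \ref{Claim9} and splitting the essential supremum into a maximum over $\{\psi\neq0\}$ and its complement (where $\D^{\vec k}\psi=0$ a.e.) give (1)$\Leftrightarrow$(2) and (2)$\Rightarrow$(3), and the converse rests on the identity $\E_\infty(v,\supp(\psi)\cap\Om)=\E_\infty(v,\{\psi\neq0\})$ for $v\in\{u,u+\psi\}$; you are right that the $\C^k$ case needs $x\mapsto\H(x,\D^{\vec k}v(x))$ to be continuous, since the paper's appeal to Remark \ref{Remark17} tacitly assumes this, and for a merely Carath\'eodory $\H$ the identity can fail when $\Om\cap\p\{\psi\neq0\}$ has positive measure. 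One correction: the jet of $\psi$ need not vanish at every point of $\p\{\psi\neq0\}$ (a simple zero, e.g.\ $\psi(x)=x$ near $0$ in one dimension, has $\psi'(0)\neq0$), but you do not need this, because density of $\{\psi\neq0\}$ in $S$ and continuity of $g$ give $\ess\sup_{\{\psi\neq0\}} g\le\ess\sup_S g\le\sup_S g=\sup_{\{\psi\neq0\}} g=\ess\sup_{\{\psi\neq0\}} g$ (the last equality by Remark \ref{Remark17} on the open set $\{\psi\neq0\}$), which also settles the ``delicate point'' you raise.
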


Proposition \ref{Proposition20} has also a localised version involving compactly contained subdomains. The statement is given right below for the sake of completeness, and the proof is identical to that of Proposition \ref{Proposition20}.

\begin{corollary}[Equivalent formulations for absolute minimisers, localised version] \label{Corollary22} Let $u \in \W^{k,\infty}(\Om ;\R^N)$ be given. Consider the following statements for the supremal functional \eqref{1.6}:
\begin{enumerate}
\item For any open $\mO \Subset \Om$ and any $\phi \in \W^{k,\infty}_0(\mO;\R^N)$, we have
\[
\E_\infty(u,\mO) \leq \E_\infty(u+\phi,\mO). 
\]
\item For any $\psi \in \W^{k,\infty}_c(\Om;\R^N)^\varobslash$, we have
\[
\E_\infty\big(u,\{\psi \neq 0\}\big) \leq \E_\infty\big(u+\psi,\{\psi \neq 0\}\big) .
\]
\item For any $\psi \in \W^{k,\infty}_c(\Om;\R^N)^\varobslash$, we have
\[
\E_\infty\big(u, \supp(\psi)\big) \leq \E_\infty\big(u+\psi, \supp(\psi) \big) .
\]
\end{enumerate}
Then (1) and (2) are equivalent, whilst they both imply (3). Conversely, (3) implies (1) and (2) if either $u,\psi \in \C^k(\overline{\Om};\R^N)$, or we restrict all classes of variations $\phi, \psi \in \W^{k,\infty}_c(\Om;\R^N)^\varobslash$ only to those which satisfy $\mL^n(\p\{\phi \neq 0\})=\mL^n(\p\{\psi \neq 0\})=0$.
\end{corollary}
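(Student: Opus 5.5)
The statement to prove is Corollary \ref{Corollary22}, and I will run the argument of Proposition \ref{Proposition20} with $\W^{k,\infty}_0(\Om)$ replaced by $\W^{k,\infty}_c(\Om)$ and open sets by compactly contained ones. The two directions of (1)$\Leftrightarrow$(2) are checked separately, then (1),(2)$\Rightarrow$(3), and finally the partial converse.

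For (1)$\Rightarrow$(2), fix $\psi\in\W^{k,\infty}_c(\Om;\R^N)^\varobslash$ and set $\mO:=\{\psi\neq0\}$. This set is open because $\psi$ is continuous by Morrey's theorem. It is compactly contained in $\Om$, since $\mO\sub\supp(\psi)\Subset\Om$. By Claim \ref{Claim9} we have $\psi\in\W^{k,\infty}_0(\mO;\R^N)$, so (1) applied with $\phi=\psi$ on $\mO$ gives (2) directly.

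For (2)$\Rightarrow$(1), fix $\mO\Subset\Om$ and $\phi\in\W^{k,\infty}_0(\mO;\R^N)$, and assume $\phi\neq0$ (otherwise there is nothing to prove). Extend $\phi$ by zero to $\Om$. The inclusion \eqref{2.0} and the subsequent discussion show that the extension lies in $\W^{k,\infty}$ and vanishes outside $\mO$. Hence $\phi\in\W^{k,\infty}_c(\Om)$, because its support lies in $\overline{\mO}$, which is compact in $\Om$. Write $\mO=\{\phi\neq0\}\uplus\{\phi=0\}\cap\mO$.
\begin{itemize}
\item On $\{\phi\neq0\}$, (2) gives $\E_\infty(u,\{\phi\neq0\})\le\E_\infty(u+\phi,\{\phi\neq0\})$.
\item On $\mO\cap\{\phi=0\}$, Claim \ref{Claim9} gives $\D^{\vec k}\phi=0$ a.e., so $\H(\cdot,\D^{\vec k}(u+\phi))=\H(\cdot,\D^{\vec k}u)$ a.e.\ there.
\end{itemize}
Since the essential supremum over a union is the maximum of the essential suprema over the pieces, these two facts combine to give (1).

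For (1),(2)$\Rightarrow$(3), the same decomposition works with $\supp(\psi)$ in place of $\mO$. On $\supp(\psi)\set\{\psi\neq0\}$ the integrands of $u$ and $u+\psi$ coincide a.e., again by Claim \ref{Claim9}. On $\{\psi\neq0\}$ we use (2). Taking the maximum yields (3).

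The converse (3)$\Rightarrow$(2) is the delicate step. The difference between $\supp(\psi)$ and $\{\psi\neq0\}$ is contained in $\p\{\psi\neq0\}$. When $\mL^n(\p\{\psi\neq0\})=0$, the essential suprema over the two sets coincide, so (3) reduces to (2). In the $\C^k(\overline\Om)$ case the plan is different: there both integrands $\H(\cdot,\D^{\vec k}u)$ and $\H(\cdot,\D^{\vec k}(u+\psi))$ are continuous, and we may assume $\H$ continuous, as in the setting of Proposition \ref{Proposition20}. By Remark \ref{Remark17}, each essential supremum then equals a supremum. The supremum over $\supp(\psi)$ equals the supremum over its dense subset $\{\psi\neq0\}$, because $\supp(\psi)$ is the closure of that set. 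Note that $\D^{\vec k}\psi=0$ on the boundary by continuity, so the integrand values of $u$ and $u+\psi$ agree there. The main obstacle is justifying this continuity reduction, exactly as in the proof of Proposition \ref{Proposition20}; everything else transfers verbatim, with compact support replacing vanishing at $\p\Om$.
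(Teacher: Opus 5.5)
Your proposal is correct and follows the paper's argument step for step. The paper's proof is the proof of Proposition~\ref{Proposition20} run verbatim with compactly contained sets and compactly supported variations, and that includes the use of Remark~\ref{Remark17} in the $\C^k$ case. As you rightly flag, that step tacitly needs $x\mapsto\H(x,\D^{\vec k}v(x))$ to be continuous. The Carath\'eodory setting does not provide this, so your phrase ``as in the setting of Proposition~\ref{Proposition20}'' is inaccurate, but the paper's own proof relies on it in exactly the same way.

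One harmless slip: your aside that $\D^{\vec k}\psi=0$ on $\p\{\psi\neq0\}$ ``by continuity'' is false pointwise. A transversal zero of $\psi$ already violates it; the statement only holds a.e.\ there. You do not need it, because density of $\{\psi\neq0\}$ in $\supp(\psi)$ already gives $\E_\infty(v,\supp(\psi))=\E_\infty(v,\{\psi\neq0\})$ for each $v\in\{u,u+\psi\}$ separately.
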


\BPP \ref{Proposition20}. It suffices to prove the implications for non-zero test functions only.

\noi \underline{\emph{(1)} $\Rightarrow$ \emph{(2)}}: Assume \emph{(1)} and fix $\psi \in \W^{k,\infty}_0(\Om;\R^N)^\varobslash$. Then, $\{\psi \neq 0\} \sub \Om$ is a non-empty open subset, and by Claim \ref{Claim9} we have  $\smash{\psi \in \W^{k,\infty}_0\big(\{\psi \neq 0\};\R^N\big)}$. We conclude by applying \emph{(1)} to $\mO:= \{\psi \neq 0\}$ for $\phi := \psi \in \W^{k,\infty}_0(\mO;\R^N)$, to deduce \emph{(2)}.

\smallskip

\noi \underline{\emph{(2)} $\Rightarrow$ \emph{(1)}}: Assume \emph{(2)}, and fix an open set $\mO \subseteq \Om$ and $\phi \in \W^{k,\infty}_0(\mO;\R^N)^\varobslash$. By extending $\phi$ by zero on $\Om\set  \mO$, we have $\smash{\phi \in \W^{k,\infty}_0(\Om;\R^N)}$. Then we have  $\smash{\D^{\vec k} \phi} =0$, a.e.\ on $\mO \set \{\phi\neq 0\}$ $\sub \Om$, because of Claim \ref{Claim9}. Therefore, we obtain
\[
\begin{split}
\E_\infty\big(u,\mO \big) &=  \underset{\mO}{\ess\sup}\, \H\big( \cdot, \D^{\vec k} u \big)
\\
&= \max\bigg\{  \underset{\mO  \set \{\phi\neq 0\}}{\ess\sup}\, \H\big( \cdot, \D^{\vec k} u \big) , \  \underset{ \{\phi\neq 0\}}{\ess\sup}\, \H\big( \cdot, \D^{\vec k} u \big) \bigg\}
\\
&= \max\bigg\{  \underset{\mO  \set \{\phi\neq 0\}}{\ess\sup}\, \H\big( \cdot, \D^{\vec k} u+ \D^{\vec k} \phi \big) , \ \underset{ \{\phi\neq 0\}}{\ess\sup}\, \H\big( \cdot, \D^{\vec k} u \big) \bigg\}
\\
&\overset{\emph{(2)}}{\leq}  \max\bigg\{  \underset{\mO  \set \{\phi\neq 0\}}{\ess\sup}\, \H\big( \cdot, \D^{\vec k} u+ \D^{\vec k} \phi \big) , \ \underset{ \{\phi\neq 0\}}{\ess\sup}\, \H\big( \cdot, \D^{\vec k} u+ \D^{\vec k} \phi \big) \bigg\}
\\
&=  \underset{\mO}{\ess\sup}\, \H\big( \cdot, \D^{\vec k} u+ \D^{\vec k} \phi \big)
\\
& = \E_\infty\big(u+\phi,\mO\big),
\end{split}
\]
for any $\mO \sub \Om$ open and any $\phi \in \W^{k,\infty}_0(\mO;\R^N)^\varobslash$. Therefore, \emph{(1)} ensues.

\smallskip

\noi \underline{\emph{(2)} $\Rightarrow$ \emph{(3)}}: Assume \emph{(2)} and fix  $\smash{\phi \in \W^{k,\infty}_0(\Om;\R^N)} ^\varobslash$. Then, $\{\phi \neq 0\}$ is open in $\Om$, and 
\[
\supp(\phi) = \{\phi \neq 0\} \biguplus \p \{\phi \neq 0\} 
\] 
where $\Om\cap(\p \{\phi \neq 0\} ) \sub \{\phi = 0\} \sub {\Om} $. If $\mL^n\big(  \Om\cap (\p \{\phi \neq 0\} )\big) =0$, then we are done, because
\[
\underset{ \Om\cap \supp(\phi) }{\ess\sup}\, f= \, \underset{ \{\phi \neq 0\} }{\ess\sup}\, f,
\]
for any $f\in \L^\infty(\Om)$. If instead $\mL^n\big(  \Om\cap (\p \{\phi \neq 0\} )\big)  >0$, then by Sobolev function theory we have that $\smash{\D^{\vec k} \phi =0}$ a.e.\ on $ \Om\cap (\p \{\phi \neq 0\})$, because $\Om\cap (\p \{\phi \neq 0\}) \sub \{\phi = 0\} \sub \Om$ and additionally $\phi \in \W^{k,\infty}_0(\Om;\R^N)$. 
These facts allow us to write
\[
\begin{split}
\E_\infty\big(u, \Om\cap \supp(\phi)\big) &=  \underset{\Om\cap \supp(\phi)}{\ess\sup}\, \H\big( \cdot, \D^{\vec k} u \big)
\\
&= \max\bigg\{  \underset{\Om\cap(\p\{\phi\neq 0\} )}{\ess\sup}\, \H\big( \cdot, \D^{\vec k} u \big) , \  \underset{ \{\phi\neq 0\}}{\ess\sup}\, \H\big( \cdot, \D^{\vec k} u \big) \bigg\}
\\
&= \max\bigg\{  \underset{ \Om\cap(\p\{\phi\neq 0\} ) }{\ess\sup}\, \H\big( \cdot, \D^{\vec k} u+ \D^{\vec k} \phi \big) , \ \underset{ \{\phi\neq 0\}}{\ess\sup}\, \H\big( \cdot, \D^{\vec k} u \big) \bigg\},
\end{split}
\]
which yields the estimate
\[
\begin{split}
\E_\infty\big(u, \Om\cap \supp(\phi)\big) &\overset{\emph{(2)}}{\leq}  \max\bigg\{  \underset{ \Om\cap(\p\{\phi\neq 0\} ) }{\ess\sup}\, \H\big( \cdot, \D^{\vec k} u+ \D^{\vec k} \phi \big) , \ \underset{ \{\phi\neq 0\}}{\ess\sup}\, \H\big( \cdot, \D^{\vec k} u+ \D^{\vec k} \phi \big) \bigg\}
\\
&=  \underset{ \Om\cap \supp(\phi) }{\ess\sup}\, \H\big( \cdot, \D^{\vec k} u+ \D^{\vec k} \phi \big),
\\
&\leq \E_\infty\big(u+\phi, \Om\cap\supp(\phi)\big),
\end{split}
\]
for any $\phi \in \W^{k,\infty}_0(\Om;\R^N)^\varobslash$. Hence, \emph{(3)} ensues.

\smallskip

\noi \underline{\emph{(3)} $\Rightarrow$ \emph{(2)}}: Under the additional assumed hypotheses, by Remark \ref{Remark17} and standard measure-theory, we have the identity 
\[
 \smash{\E_\infty\big(v, \Om\cap\supp(\psi)\big) =  \E_\infty\big(v, \{\psi\neq 0\}\big)}, 
 \]
 for $v \in\{ u,u+\psi\}$. The conclusion  ensues. \qed
\ms

The primary convexity notion that renders supremal functionals weakly* sequentially lower semi-continuous is that of level-convexity with respect to the leading term, see \cite{BJW2, GM, P1, P2, RZ2}. This notion, which requires convex sublevel sets, is much weaker than convexity. Similarly to the vectorial case for integral functionals, the notion which is not only sufficient but also necessary in the vectorial supremal case is slightly weaker than level-convexity, an implicit notion due to Barron-Jensen-Wang which we call \emph{BJW-convexity}, introduced in \cite{BJW2}. The corresponding leading notion for integral functionals which characterises sequential lower semi-continuity in the vectorial case (under extra bounds), is of course the celebrated Morrey's quasiconvexity notion \cite{D}. It was later proved that BJW-convexity is indeed strictly weaker than level-convexity when $N\geq2$ (see \cite{RZ2}). Notwithstanding, convexity notions and weak* sequential lower semi-continuity considerations are less consequential for supremal functionals when compared to integral functionals, because the direct method does not yield \emph{absolute} minimisers, only global minimisers.

One further central hindrance to the development of the field from the start has been the lack of Gateaux differentiability of supremal functionals. Unlike the integral case, whereat regularity of the integrand and growth bounds easily imply differentiability of the functional by using the chain rule, in the supremal case the situation does not improve regardless of any stringent assumptions imposed on the supremand. The following example is rather striking, as it shows that \emph{the model  functional \eqref{5.1} is Gateaux non-differentiable at any absolute or global minimiser even for $n=N=1$}. 

\begin{example}[Non-differentiability of supremal functionals]\label{Example23} Let $\Om \sub \R$ be any open bounded interval, and consider the supremal functional \eqref{5.1} for $N=1$.
In this case, it is easy to confirm that absolute minimisers of $\I_\infty$ are unique (given endpoint boundary data), coincide with global minimisers and consist of the class of affine functions on $\R$. Fix any affine function $A : \R \larrow \R$ with $A(x):=ax+b$, $a>0$, $b\in \R$, which is an absolute minimiser of $\I_\infty$ on $\Om$, and also a global minimiser on $\Om$ for its own boundary values on $\p\Om$. By Remark \ref{Remark17} we have
\[
\I_\infty (u,\mO) = \max_{\overline{\Om}} | u'|^2, \ \ \text{ for all } u\in \C^1(\overline{\Om}).
\]
Fix $\phi \in (\C^0_0 \cap \C^1)(\overline{\Om})$. By minimality we have
\[
\I_\infty (A,\Om) \leq \I_\infty (A+t\phi,\Om), \ \ \text{ for all }t\neq 0, 
\]
which implies
\[
\liminf_{t\to0^+}\frac{\I_\infty (A+t\phi,\Om) - \I_\infty (A,\Om)}{t} \geq 0 \geq \limsup_{t\to0^-}\frac{\I_\infty (A+t\phi,\Om) - \E_\infty (A,\Om)}{t}.
\]
By Danskin's theorem applied to $\F : \R \by \overline{\Om} \larrow \R$ given by $ \F(t,x) := |a+t\phi'(x)|^2$, we have that the left and right semi-differentials exist. Since $|A'|=A' \equiv a>0$ on $\overline{\Om}$, we have that $\Argmax\{|A'| :  \overline{\Om}\} = \overline{\Om}$. This implies
\[
\left\{ \ \ 
\begin{split}
\big( \mfD \I_\infty(\cdot,\Om)\big)^+_A (\phi) &=\max_{\Argmax\{|A'| \,: \, \overline{\Om}\}} [A'\, \phi'] = a \max_{\overline{\Om}} [  \phi']  \geq 0,
\\
\big( \mfD \I_\infty(\cdot,\Om)\big)^-_A (\phi) &=\min_{\Argmax\{|A'| \,:\, \overline{\Om}\}} [A'\, \phi' ] =a \min_{\overline{\Om}} [  \phi'] \leq 0.
\end{split} 
\right. \ \ \  \
\]
Since $\phi =0$ on $\p\Om$, we have $\int_\Om \phi'=0$. If $\phi \not\equiv 0$, there must exist points $x^\pm \in \Om$ such that $\phi'(x^+)>0>\phi'(x^-)$. Thus, the left/right semi-derivatives do not match:
\[
\big( \mfD \I_\infty(\cdot,\Om)\big)^+_A (\phi) >\big( \mfD \I_\infty(\cdot,\Om)\big)^-_A (\phi), \ \ \text{ for \emph{all} }\phi \in (\C^0_0 \cap \C)^1(\overline{\Om})\set\{0\}.
\]
Therefore $\I_\infty(\cdot,\Om)$ is not Gateaux differentiable at any affine (absolute and global) minimiser, along any direction. By the same reasoning, it follows that $\I_\infty(\cdot,\mO)$ is not Gateaux differentiable at any minimiser for any open subdomain $\mO \sub \Om$ and along any direction. 
\end{example}

The lack of differentiability of \eqref{1.6} has been partially circumvented in the literature by considering approximations by differentiable functionals, typically the corresponding $\L^p$ functionals for $p<\infty$. The idea, exemplified below in the case of the scalar-valued first order supremal functionals (depending only on $(x,\D u)$), is to approximate by the corresponding $\L^p$-functionals, and use the following rectangle to discover a PDE by passing to the limit. 
 \[
 \begin{split}
&\ \ {\E_p(u,\Om) := \left(\, \av_\Om \H(\cdot, \D u)^p\, \mathrm d \mL^n \! \right)^{\!\!1/p} }\overset{p\to\infty}{-\!\!\!-\!\!\!-\!\!\!-\!\!\!-\!\!\!-\!\!\!-\!\!\!\larrow}\ \| \H(\cdot, \D u) \|_{\L^\infty(\Om)} = \E_\infty(u,\Om)
\\ 
&\begin{array}{c}
 \hspace{80pt}\big\uparrow \hspace{200pt} {\big\uparrow} \hspace{100pt}
\\
 \vspace{-0.5em}
 \hspace{79.34pt} \bigg| \hspace{204pt}  {\vdots} \hspace{100pt}
 \\
 \hspace{80pt}\big\downarrow \hspace{200pt} {\big\downarrow} \hspace{100pt}
 \end{array}
\\
& \hspace{14pt} \div\big(\H(\cdot, \D u)^{p-1} \H_{,X}(\cdot, \D u)\big) =0\ \  \overset{p\to\infty}{-\!\!\!-\!\!\!-\!\!\!-\!\!\!-\!\!\!-\!\!\!-\!\!\!\larrow} \ \ \ \ \ \ \ \ \ {\text{ ???}}  
\end{split}
\]
The expectation is that the PDE you can discover via this limiting process, subsequently it could perhaps be connected directly to the supremal functional, as one cannot take variations of $\E_\infty$ to derive it. Despite $\E_\infty$ being non-differentiable, it is remarkable that this process does indeed lead to PDEs that can a posteriori be related to the functional. The method however is not universal, and depending on how one passes to the limit as $p\to \infty$, it yields more than one PDE, and of different types. Below are the relevant formal arguments:

\smallskip
\noi {\bf Approach 1.} This idea dates back to Aronsson \cite{A3,A4}. By distributing the divergence and normalising, we obtain
\[
\D\big(\H(\cdot, \D u)\big)\H_{,X}(\cdot, \D u) + \frac{\H(\cdot, \D u)\big) \div \big(\H_{,X}(\cdot, \D u)\big)}{p-1} =0 , \ \ \text{ in }\Om,
\]
from where we discover the Aronsson equation as $p\to\infty$ (see also Theorem \ref{Theorem26} in  Section \ref{Section 8}). This PDE has been studied quite extensively in the literature, in the context of viscosity solutions (\cite{BEJ, BJ, BJW1, C1, CWY, DPZZ, KZZ, MWZ, PWZ, Yu}). Satisfaction of this PDE indeed can be proved to be necessary for absolute minimality. However, except for special cases and under strong convexity assumptions (see \cite[Theorem 1, p.\ 155]{Yu}, and Example \ref{Example36} in Section \ref{Section 8}), the Aronsson equation obtained via this process is not sufficient for minimality. 

\smallskip

\noi {\bf Approach 2.} This approach is more recent and is due to Evans-Yu in the scalar case \cite{EY} (and due to the author with Moser in the vectorial and higher order case \cite{KM1, KM2, KM3}, see also \cite{AyK1, CK1, CK2, K1, K3, K3.5}). For any $\mO \sub \Om$, rescale the PDE as
\[
\div\bigg(\dfrac{ \H(\cdot, \D u)^{p-1}  }{\E_p(u,\mO)^{p-1}} \H_{,X}(\cdot, \D u) \bigg) =0,  \ \ \text{ in }\Om.
\]
Then, by passing to limits, one obtains a family of divergence PDEs for measures, parameterised by the topology of $\Om$:
\[
\ \ \ \div\big(\si_\mO  \H_{,X}(\cdot, \D u) \big) =0, \ \ \si_\mO \in \mathcal M^+(\overline{\mO}), \ \ \mO \sub \Om \text{ open}.
\]
The main issue here is that one does not have a single PDE, but rather a parametric family. Further, these measures typically are supported on the boundary $\p\mO$, charging the ``argmax" set $\{\H(\cdot,\D u) : \overline{\mO}\}$. Thus, the PDE in that case is trivial inside the domain. This approach, that clearly has inherent limitations, is far less understood than Aronsson equations.

\begin{remark}[The vectorial case] Now we discuss briefly the vectorial case. Since it presents several additional layers of complexity, we will only consider the archetypal case of the $\infty$-Laplacian. It was first derived from $\L^p$-approximations  in \cite{K10}. For maps $u : \R^n \supseteq \Om \larrow \R^N$, it reads
\[
\De_\infty u := \Big( \D u \ot \D u + |\D u|^2[\![ \D u ]\!]^\bot\! \ot \I \Big) \!:\! \D^2 u =0 \ \ \text{ in }\Om,
\]
where for any matrix $\X \in \R^{N\by n}$, $[\![ \X ]\!]^\bot := \mathrm{Proj}_{\mathrm{R}(\X : \R^n \to \R^N)^\bot}$ is the orthogonal projection on the orthogonal complement of its range. $\De_\infty u =0$ is the simplest non-trivial Aronsson system, but it has discontinuous coefficients even for smooth solutions, which have interfaces whereon the dimension of the tangent space to $u(\Om)$ changes \cite{K7, K8, K9, K10}. The system actually consists of two differential operators orthogonal to each other, the tangential one $\D u \D(|\D u|^2)=0$, and the normal one $[\![ \D u ]\!]^\bot\De u=0$, which expresses that $\De u$ is tangential to $u(\Om)$. Further, it appears that $\infty$-Harmonic maps \textbf{cannot} be characterised through absolute minimisers, as classical solutions to $\De_\infty u =0$ are characterised in \cite{K8} through different classes of variations, specifically rank-one absolute minimals and variations normal to $u(\Om)$ but free on the boundary (see also  \cite{AyK1, AyK2} for extensions). In \cite{KS} it was shown that the Dirichlet problem admits smooth non-minimising solutions, therefore, similarly to the situation in Example \ref{Example36}, $\De_\infty $ is clearly not sufficient for (absolute) minimality. Finally, viscosity solutions do not apply to this non-divergence non-monotone system, and one needs alternative novel approaches to study it rigorously, as solutions are typically non-classical (\cite{CKP, K4}). 
\end{remark}


\section{Semi-differentiability of generalised $\L^\infty$ envelopes}
\label{Section 3}

In this section we establish our first main result, Theorem \ref{Theorem2}, as well as some supplementary results, all in the generality of arbitrary Banach spaces and measure spaces. We begin with the proof of  Theorem \ref{Theorem2}, which consists of Lemmas \ref{Lemma6}  and \ref{Lemma7}. These lemmas establish as two halves an upper bound and a lower bound for the upper/lower semi-derivatives, which together imply the desired conclusion \eqref{1.13}-\eqref{1.14}. Let $(\mX,\|\cdot\|)$ be a Banach space with dual space $(\mX^*,\|\cdot\|_*)$, and let $(\Om,\mathfrak L,\la)$ be a measure space. Let $\F : \mX \by \Om \larrow \R$ be a function which satisfies \eqref{1.8}, namely $\F(u,\cdot) \in \L^\infty(\Om,\la)$ for all $u\in \X$, and $\F(\cdot,x) \in \C^1(\mX)$ for $\la$-a.e.\ $x\in \Om$. Consider the  $\L^\infty$ $\la$-envelope of $\F$ with respect to $x\in \Om$, defined in \eqref{1.12}:
\[
f(u) = \la \text{-} \underset{x\in \Om}{\ess\sup}\, \F(u,x), \ \ \ f \ : \ \mX \larrow \R.
\]
For any $\e>0$, recall also the approximate argmax sets, defined in \eqref{1.14}:
\[
 \Om_\e(u) = \Big\{  \ \F(u,\cdot) \geq   \la \text{-} \underset{x\in \Om}{\ess\sup}\, \F(u,x)-\e\Big\}.
 \]
 For convenience, let us restate assumptions \eqref{1.10}-\eqref{1.11} of Theorem \ref{Theorem2} in a more explicit fashion: for any $v,\phi \in \mX$, there exist increasing moduli of continuity $\om_{v,\phi}, \om_v \in \C(0,\infty)$ with $\om_{v,\phi}(0^+)=\om_v (0^+)=0$ (depending on $\{v,\phi\}$ and on $v$ respectively), such that for all $w\in\mX$
\begin{align}
\label{1.15}
 \la \text{-} \underset{x\in \Om}{\ess\sup}\, \Big| \Big[\big(\p\F(\cdot,x)\big)_w 
\! - \! \big(\p\F(\cdot,x)\big)_v\Big](\phi)\Big|  & \leq \om_{v,\phi} \big( \| w-v\|\big), 
\\
\label{1.16}
 \la \text{-} \underset{x\in \Om}{\ess\sup}\, \Big| \F(w,x)  - \F(v,x) \Big| & \leq \om_{v} \big( \| w-v\| \big).
\end{align}

\begin{lemma} \label{Lemma6} Suppose that $\F$ satisfies \eqref{1.8}, and additionally $\p \F (\cdot,x) : \mX \larrow \mX^*$ is strongly-weakly* continuous, $\la$-essentially uniformly in $x\in \Om$, i.e.\ \eqref{1.15} holds. Then, for any $u,\phi \in \mX$
\beq
\label{3.1}
\begin{split}
(\underline{\mfD} f)^+_u(\phi) \geq \lim_{\e \to 0^+} \! \bigg(\la \text{-\hspace{1pt}} \underset{x\in \Om_\e(u)}{\ess\sup}\, \big(\p \F (\cdot,x)\big)_u(\phi) \bigg).
\end{split}
\eeq  
\end{lemma}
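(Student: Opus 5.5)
The plan is to prove a pointwise-in-$x$ lower bound for $\F(u+t\phi,x)$ on a suitable subset of an approximate argmax set, and then take the $\la$-essential supremum. The key device is to couple the approximation parameter with $t$, taking $\e=\eta t$, so that the $-\e$ loss disappears after dividing by $t$. Only \eqref{1.8} and \eqref{1.15} are needed; \eqref{1.11} is not used.

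\emph{Preliminaries.} Write $g(x):=\big(\p\F(\cdot,x)\big)_u(\phi)$, which is $\mathfrak L$-measurable as an a.e.\ limit of the difference quotients $\big(\F(u+s\phi,x)-\F(u,x)\big)/s$. For $\e>0$ set $L_\e:=\la\text{-}\ess\sup_{\Om_\e(u)} g\in(-\infty,+\infty]$. Since $f(u)$ is the essential supremum of $\F(u,\cdot)$, we have $\la(\Om_\e(u))>0$, so $L_\e$ is well defined. Moreover $\e\mapsto L_\e$ is nondecreasing, so $L_0:=\lim_{\e\to0^+}L_\e=\inf_{\e>0}L_\e$ exists, and $L_\e\ge L_0$ for every $\e>0$. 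We may assume $L_0>-\infty$, since otherwise \eqref{3.1} is trivial.

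\emph{Pointwise bound.} Fix $t>0$. For $\la$-a.e.\ $x$, the function $\F(\cdot,x)$ is $\C^1$, so the mean value theorem on the segment $[u,u+t\phi]$ gives some $\theta=\theta(x,t)\in(0,1)$ with
\[
\F(u+t\phi,x)-\F(u,x)=t\big(\p\F(\cdot,x)\big)_{u+\theta t\phi}(\phi).
\]
By \eqref{1.15} and the monotonicity of $\om_{u,\phi}$, this is at least $t\,g(x)-t\,\om_{u,\phi}(t\|\phi\|)$ for $\la$-a.e.\ $x$, uniformly in $x$. The essential uniformity in \eqref{1.15} is exactly what makes the error independent of $x$ and of the unknown $\theta(x,t)$.

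\emph{Choice of the set and conclusion.} Fix $\eta,\de>0$ and set $\e:=\eta t$. Choose a level $M<L_{\eta t}$ with $M\ge L_0-\de$; this is possible because $L_{\eta t}\ge L_0>L_0-\de$, and it also covers the case $L_{\eta t}=+\infty$. By the definition of essential supremum, the set $A:=\Om_{\eta t}(u)\cap\{g>M\}$ has positive measure. On $A$ we have $\F(u,x)\ge f(u)-\eta t$, hence
\[
\F(u+t\phi,x)\ge f(u)-\eta t+tM-t\,\om_{u,\phi}(t\|\phi\|),
\]
and since $\la(A)>0$ the same lower bound holds for $f(u+t\phi)$. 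Dividing by $t$ gives
\[
\frac{f(u+t\phi)-f(u)}{t}\ge L_0-\de-\eta-\om_{u,\phi}(t\|\phi\|).
\]
Taking $\liminf_{t\to0^+}$ and then letting $\eta,\de\to0^+$ yields \eqref{3.1}.

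The main obstacle is the $-\e$ loss coming from working on $\Om_\e(u)$ rather than on an exact argmax set, which may be $\la$-null. The coupling $\e=\eta t$, together with the monotonicity $L_{\eta t}\ge L_0$, is what I expect to resolve it. Beyond that, the only points to check are the measurability of $g$ and the positivity of $\la(A)$.
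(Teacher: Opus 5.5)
Your proof is correct and follows essentially the same route as the paper. Both arguments derive a pointwise lower bound for $\F(u+t\phi,x)-\F(u,x)$ on the approximate argmax set $\Om_\e(u)$, using the fundamental theorem of calculus (your mean value theorem) together with the uniform modulus from \eqref{1.15}, and then take the essential supremum over $\Om_\e(u)$. The only difference is that your coupling $\e=\eta t$ is not needed: the paper's bound $\frac{f(u+t\phi)-f(u)}{t}\geq L_\e-\frac{\e}{t}-\om_{u,\phi}(t\|\phi\|)$ holds for every $\e>0$ at fixed $t$, so it simply lets $\e\to0^+$ first (using $L_\e\geq L_0$) and then $t\to0^+$.
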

Note that the right hand side limit exists always, by virtue of the properties of $\F$ and the monotonicity of the set-mapping $\e \mapsto \Om_\e(u)$.

\BPL \ref{Lemma6}. Fix $u,\phi \in \mX$, $t>0$ and $\e>0$. By utilising \eqref{1.13}-\eqref{1.14}, for $\la$-a.e.\ $x \in \Om_\e(u)$, we may estimate
\[
\begin{split}
f(u+t\phi) - f(u) &=  \la \text{-} \underset{x\in \Om}{\ess\sup}\, \F(u+t\phi ,x) -  \la \text{-} \underset{x\in \Om}{\ess\sup}\, \F(u,x)
\\
&\geq  \la \text{-} \underset{x\in \Om}{\ess\sup}\, \F(u+t\phi ,x) - \F(u,x) -\e
\\
&\geq  \F(u+t\phi ,x) - \F(u, x) -\e.
\end{split}
\]
By the Fr\'echet differentiability of $\F(\cdot,x)$, the fundamental theorem of Calculus yields
\[
\begin{split}
\frac{f(u+t\phi) - f(u)}{t} &\geq   \int_0^1 \big(\p\F(\cdot,x)\big)_{u+\la t\phi }(\phi) \,\mathrm d \la -\frac{\e}{t}
\\
&\geq   \int_0^1 \big(\p\F(\cdot,x)\big)_{u}(\phi) \,\mathrm d \la    -\frac{\e}{t}
\\
& \ \ \ \ -   \la \text{-} \underset{\bar x\in \Om}{\ess\sup}\,   \bigg| \!\int_0^1 \Big[ \big(\p\F(\cdot, \bar x)\big)_{u+\la t\phi } -\big(\p\F(\cdot, \bar x)\big)_{u }\Big] (\phi) \,\mathrm d \la \bigg|.
\end{split}
\]
 for $\la$-a.e.\ $x \in \Om_\e(u)$. Hence, by taking the $\la$-ess\,sup with respect to $x\in \Om_\e(u)$, we have the estimate
\[
\begin{split}
\frac{f(u+t\phi) - f(u)}{t} & \geq   \la \text{-} \underset{x\in \Om_\e(u)}{\ess\sup}\,  \big(\p\F(\cdot,x)\big)_{u}(\phi)   -\frac{\e}{t}
\\
\ \ \ \  \ \ - \bigg( & \int_0^1 \mathrm d \la \bigg) \! \sup_{w\in\mX : \|w-u\|\leq t\|\phi\|} \!\! \bigg( \la \text{-} \underset{\bar x\in \Om}{\ess\sup}\, \Big| \big(\p\F(\cdot,\bar x)\big)_{w}(\phi)\! -\! \big(\p\F(\cdot,\bar x)\big)_{u }(\phi)\Big|  \bigg).
\end{split}
\]
In view of assumption \eqref{1.15}, the above estimate implies that there exists an increasing modulus of continuity $\om_{u,\phi} \in \C(0,\infty)$ with $\om_{u,\phi}(0^+)=0$ such that
\[
\begin{split}
\frac{f(u+t\phi) - f(u)}{t} \geq    \la \text{-} \underset{x\in \Om_\e(u)}{\ess\sup}\, \big(\p\F(\cdot,x)\big)_{u}(\phi)   -\frac{\e}{t} - \om_{u,\phi} \big( t\| \phi \| \big).
\end{split}
\]
Since the last term above tends to zero at $t\to0^+$, by letting first $\e \to 0^+$ and subsequently $t \to 0^+$, we conclude with
\[
\begin{split}
\liminf_{t\to0^+}\frac{f(u+t\phi) - f(u)}{t}  \geq \lim_{\e \to 0^+} \! \bigg(\la \text{-} \underset{x\in \Om_\e(u)}{\ess\sup}\,   \big(\p\F(\cdot,x)\big)_{u}(\phi)\bigg).
\end{split}
\]
This establish the claimed inequality. \qed
\ms

Now we demonstrate the upper bound for the semi-differential.

\begin{lemma} \label{Lemma7}  Suppose that $\F$ satisfies \eqref{1.8}, and additionally $\p \F (\cdot,x) : \mX \larrow \mX^*$ is strongly-weakly* continuous, $\la$-essentially uniformly in $x\in \Om$, that is \eqref{1.15} holds. Suppose also that $\F (\cdot,x) : \mX \larrow \R$ is continuous, $\la$-essentially uniformly in $x\in \Om$, namely \eqref{1.16} is satisfied. Then, for all $u,\phi \in \mX$ we have
\beq
\label{3.2}
\begin{split}
(\overline{\mfD} f)^+_u(\phi) \leq \lim_{\e \to 0^+} \! \bigg(\la \text{-\hspace{1pt}} \underset{x\in \Om_\e(u)}{\ess\sup}\, \big(\p \F (\cdot,x)\big)_u(\phi) \bigg).
\end{split}
\eeq 
\end{lemma}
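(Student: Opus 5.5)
The plan is to bound the difference quotient from above by splitting $\Om$ into the approximate argmax set $\Om_\e(u)$ and its complement, and to use \eqref{1.16} to show that for small $t$ the complement cannot carry the essential supremum of $\F(u+t\phi,\cdot)$. Fix $u,\phi\in\mX$ and $\e>0$, and write
\[
f(u+t\phi)=\max\Big\{\la\text{-}\underset{\Om_\e(u)}{\ess\sup}\,\F(u+t\phi,\cdot),\ \la\text{-}\underset{\Om\set\Om_\e(u)}{\ess\sup}\,\F(u+t\phi,\cdot)\Big\}.
\]
On $\Om\set\Om_\e(u)$ we have $\F(u,\cdot)<f(u)-\e$. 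By \eqref{1.16} with $v=u$, $w=u+t\phi$, the second essential supremum is therefore at most $f(u)-\e+\om_u(t\|\phi\|)$.

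Next I bound the first term. For $\la$-a.e.\ $x\in\Om_\e(u)$, I will apply the fundamental theorem of calculus as in Lemma \ref{Lemma6} to get
\[
\F(u+t\phi,x)\le \F(u,x)+t\big(\p\F(\cdot,x)\big)_u(\phi)+t\,\om_{u,\phi}(t\|\phi\|),
\]
where \eqref{1.15} controls the remainder $\int_0^1[(\p\F(\cdot,x))_{u+st\phi}-(\p\F(\cdot,x))_u](\phi)\,\mathrm ds$ uniformly in $x$. Since $\F(u,x)\le f(u)$ a.e., taking the essential supremum over $\Om_\e(u)$ gives
\[
\la\text{-}\underset{\Om_\e(u)}{\ess\sup}\,\F(u+t\phi,\cdot)\le f(u)+t\,S_\e+t\,\om_{u,\phi}(t\|\phi\|),\qquad S_\e:=\la\text{-}\underset{\Om_\e(u)}{\ess\sup}\,\big(\p\F(\cdot,x)\big)_u(\phi).
\]
Here $S_\e$ is finite: $\la(\Om_\e(u))>0$ by the definition of the essential supremum, and $S_\e$ is bounded above because $(\p\F(\cdot,x))_u(\phi)$ is essentially bounded. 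I will justify the latter by taking the limit of difference quotients and using \eqref{1.16}, or more simply by invoking the Lipschitz-type bound that \eqref{1.15} gives on a small ball.

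To combine the two pieces, note that $\om_u(t\|\phi\|)\to0$. So for $t$ small, depending on $\e$, the second term satisfies $f(u)-\e+\om_u(t\|\phi\|)\le f(u)-\e/2$. Meanwhile the first bound is $f(u)+O(t)$, which exceeds $f(u)-\e/2$ once $t$ is small, and it also dominates $f(u+t\phi)$ from below in the relevant sense. Hence for $0<t<t_0(\e)$,
\[
\frac{f(u+t\phi)-f(u)}{t}\le \max\Big\{S_\e+\om_{u,\phi}(t\|\phi\|),\ -\tfrac{\e}{2t}\Big\}.
\]
For small $t$ the right-hand side equals $S_\e+\om_{u,\phi}(t\|\phi\|)$, since $-\e/(2t)\to-\infty$ while $S_\e$ is bounded below by $S_{\e'}$ for any $\e'<\e$.

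Taking $\limsup_{t\to0^+}$ gives $(\overline{\mfD}f)^+_u(\phi)\le S_\e$ for every $\e>0$. Letting $\e\to0^+$ then yields \eqref{3.2}, using the monotonicity of $\e\mapsto S_\e$.

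I expect the main obstacle to be the order of limits: $\e$ must be fixed before $t\to0$, which is the reverse of the order used in Lemma \ref{Lemma6}. It is exactly the uniform continuity \eqref{1.16} that prevents points outside $\Om_\e(u)$ from contributing to the essential supremum for small $t$. A secondary technical point is ruling out $S_\e=-\infty$ or $+\infty$. Finiteness from above follows from the essential uniform continuity of the derivative near $u$. If the limit in $\e$ were $-\infty$, the inequality is still consistent, and I will handle that case by the same estimate.
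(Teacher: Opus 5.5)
Your proof is correct. It rests on the same two ingredients as the paper's: \eqref{1.16} to confine the relevant points to $\Om_\e(u)$, and the fundamental theorem of calculus with \eqref{1.15} on that set, with $\e$ frozen while $t\to0^+$. The difference lies only in how the localisation is organised.

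The paper argues from the perturbed side. It takes near-maximisers $x\in\Om_\de(u+t\phi)$ and uses Claim~\ref{Claim8} to get $\Om_\de(u+t\phi)\subseteq\Om_{\de+2\om_u(t\|\phi\|)}(u)\subseteq\Om_\e(u)$ once $\e\ge4\om_u(t\|\phi\|)$. It then estimates $f(u+t\phi)-f(u)\le\F(u+t\phi,x)-\F(u,x)+\de$ there, and finally sends $\de\to0^+$.

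You argue from the unperturbed side, splitting $\Om=\Om_\e(u)\cup(\Om\set\Om_\e(u))$. This gives, for every $t>0$,
\[
\frac{f(u+t\phi)-f(u)}{t}\le\max\Big\{S_\e+\om_{u,\phi}(t\|\phi\|),\ \frac{\om_u(t\|\phi\|)-\e}{t}\Big\},
\]
which is in effect the contrapositive of Claim~\ref{Claim8} and avoids the auxiliary parameter $\de$.

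The paper's route gives the explicit a priori bound of Corollary~\ref{Corollary19A} directly for each admissible $t$. Yours gives the same bound, and even under the weaker condition $2\om_u(t\|\phi\|)<\e$, but only after you add the observation $f(u+t\phi)\ge f(u)-\om_u(t\|\phi\|)$. That observation rules out the second entry of the maximum. It is presumably what your sentence about the first bound ``dominating $f(u+t\phi)$ from below'' was reaching for; as written, that sentence should be replaced by it or deleted.

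For the lemma itself you need neither that observation nor your finiteness discussion. The $\limsup_{t\to0^+}$ of a maximum is the maximum of the two limsups, and the second one is $-\infty$. So $(\overline{\mfD}f)^+_u(\phi)\le S_\e$ whatever the value of $S_\e$, and $S_\e=+\infty$ would make \eqref{3.2} trivial anyway. In particular, the appeal to $S_{\e'}$ for a lower bound is circular and can be dropped. If you do want boundedness of $x\mapsto(\p\F(\cdot,x))_u(\phi)$, it comes from \eqref{1.8} together with \eqref{1.15} at one fixed $t$, not from \eqref{1.15} alone and not from a limit of difference quotients.
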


\BPL \ref{Lemma7}. Let us fix $u,\phi \in \mX$ and $t>0$. We begin by noting that by assumption \eqref{1.16}, there exists an increasing modulus of continuity $\om_{u} \in \C(0,\infty)$ with $\om_{u} (0^+)=0$, such that 
\beq
\label{3.3}
\begin{split}
\la \text{-} \underset{\Om}{\ess\sup} \Big| \F(u+t\phi,\cdot) -\F(u,\cdot)\Big| &\leq \om_u \big(\|u+t\phi - u\| \big) = \om_u \big(t\|\phi\| \big) ,
\end{split}
\eeq
Now we establish an inclusion on the approximate argmax sets, stating that the set of a perturbation of an element is contained into an enlarged set of the element itself.

\begin{claim}\label{Claim8} For $u,\phi,\om_u, t$ as above and for any $\de>0$, we have
\[
\begin{split}
\Om_\de(u+t\phi)  \sub \Om_{\de+2\om_u(t\|\phi \|)}(u)  .
 \end{split}
 \]
\end{claim}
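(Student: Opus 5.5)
The plan is to compare both the pointwise values $\F(u+t\phi,\cdot)$ versus $\F(u,\cdot)$ and the envelopes $f(u+t\phi)$ versus $f(u)$ using the single uniform bound \eqref{3.3}. There is one subtlety. Since $\Om_\de(u+t\phi)$ is defined pointwise, the inclusion can only be expected up to a $\la$-nullset, namely the exceptional set where \eqref{3.3} fails pointwise. I will therefore prove the inclusion for $\la$-a.e.\ point. This suffices for every later use, since the sets only enter through $\la$-essential suprema. If one insists on a genuine set inclusion, I would first modify $\F$ on a nullset, or equivalently define the sets modulo nullsets.

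\textbf{Step 1 (the envelopes are close).} Let $N_t\in\mathfrak L$ be the $\la$-nullset outside of which $|\F(u+t\phi,x)-\F(u,x)|\le \om_u(t\|\phi\|)$. Off $N_t$ we have $\F(u+t\phi,x)\le \F(u,x)+\om_u(t\|\phi\|)\le f(u)+\om_u(t\|\phi\|)$ for $\la$-a.e.\ $x$. Taking the essential supremum gives $f(u+t\phi)\le f(u)+\om_u(t\|\phi\|)$. By symmetry, $|f(u+t\phi)-f(u)|\le \om_u(t\|\phi\|)$. This is just the $1$-Lipschitz property of the $\la$-ess\,sup with respect to the $\L^\infty$ norm.

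\textbf{Step 2 (pointwise chain).} Take $\hat x\in \Om_\de(u+t\phi)\set N_t$, so that $\F(u+t\phi,\hat x)\ge f(u+t\phi)-\de$. Then
\[
\F(u,\hat x)\ \ge\ \F(u+t\phi,\hat x)-\om_u(t\|\phi\|)\ \ge\ f(u+t\phi)-\de-\om_u(t\|\phi\|)\ \ge\ f(u)-\de-2\om_u(t\|\phi\|),
\]
where the last step uses Step 1. Hence $\hat x\in \Om_{\de+2\om_u(t\|\phi\|)}(u)$.

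The main obstacle is purely the bookkeeping of the nullset $N_t$, that is, making precise in what sense the inclusion holds. The inequalities themselves are routine.
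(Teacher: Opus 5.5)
Your proof is correct and is the paper's argument: the same chain through \eqref{3.3}, with the envelope comparison $|f(u+t\phi)-f(u)|\le\om_u(t\|\phi\|)$ (used silently in the paper) supplying the second $\om_u$. Your point that the inclusion holds only up to the $\la$-nullset where \eqref{3.3} fails pointwise is a genuine refinement that the paper glosses over, and it costs nothing, since these sets enter Lemma~\ref{Lemma7} only through $\la$-essential suprema.
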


\BPC \ref{Claim8}. By definition \eqref{1.14} applied to $u+t\phi$, for any $x\in \Om_{\de}(u+t\phi)$, we have
\[
\begin{split}
\F(u+t\phi,x) &\geq \la \text{-} \underset{\Om}{\ess\sup}\, \F(u+t\phi,\cdot) -\de.
\end{split}
\]
Fix such a point $x\in \Om_{\de}(u+t\phi)$. By utilising \eqref{3.3} and the above estimate, we have
\[
\begin{split}
\F(u,x) &\geq \F(u+t\phi,x) -\om_u \big(t\|\phi\| \big)
\\
&\geq \la \text{-} \underset{\Om}{\ess\sup}\, \F(u+t\phi,\cdot) -\de-\om_u \big(t\|\phi\| \big)
\\
& \geq \la \text{-} \underset{\Om}{\ess\sup}\, \F(u,\cdot) -\big( \de + 2\om_u \big(t\|\phi\| \big)\big).
\end{split}
\]
In view of definition \eqref{1.14}, we deduce that $x\in \Om_{\de + 2 \om_u (t\|\phi\| )}(u)$, yielding the claimed inclusion $\Om_\de(u+t\phi)  \sub \Om_{\de+2\om_u(t\|\phi \|)}(u) $. 
\qed
\ms

\noi Now we complete the proof of Lemma \ref{Lemma7}. For any fixed $u,\phi \in \mX$ and $t>0$, let us select any $\e,\de>0$ such that
\beq
\label{3.4}
\de \leq 2\om_u(t\|\phi \|), \ \ \ \e \geq 4\om_u(t\|\phi \|).
\eeq
In view of \eqref{1.12} and \eqref{1.14}, for $\la$-a.e.\ $x \in \Om_\de(u+t\phi)$, we may estimate
\[
\begin{split}
f(u+t\phi) -f(u)& = \la \text{-} \underset{\Om}{\ess\sup}\, \F(u+t\phi,\cdot) - \la \text{-} \underset{\Om}{\ess\sup}\, \F(u,\cdot) 
\\
&\leq \F(u+t\phi,x) +\de - \la \text{-} \underset{\Om}{\ess\sup}\, \F(u,\cdot) 
\\
&\leq \F(u+t\phi,x)  - \F(u,x) +\de.
\end{split}
\]
By the Fr\'echet differentiability of $\F(\cdot,x)$ and assumption \eqref{1.16}, the fundamental theorem of Calculus applied to the above estimate, yields
\[
\begin{split}
f(u+t\phi) -f(u) &= t \int_0^1 \big(\p\F(\cdot,x)\big)_{u+\la t\phi}(\phi)\, \mathrm d \la +\de
 \\
 &\leq  \la \text{-} \underset{x\in \Om^\de(u+t\phi)}{\ess\sup}\, \bigg[ \int_0^1 \big(\p\F(\cdot,x)\big)_{u+\la t\phi}(\phi)\, \mathrm d \la \bigg]t +\de.
\end{split}
\]
Note now that by Claim \ref{Claim8} and \eqref{3.4}, we have the inclusions 
\beq
\label{3.5}
 \Om^\de(u+t\phi) \sub \Om_{\de+2\om_u(t\|\phi \|)}(u)  \sub \Om_{4\om_u(t\|\phi \|)}(u)  \sub \Om_{\e}(u).
\eeq
Therefore, by \eqref{3.5}, we may further estimate
\[
\begin{split}
\frac{f(u+t\phi) -f(u)}{t} &\leq  \la \text{-} \underset{x\in \Om_\e (u)}{\ess\sup}\, \bigg[ \int_0^1 \big(\p\F(\cdot,x)\big)_{u+\la t \phi}(\phi) \mathrm d \la \bigg] + \frac{\de}{t}
\\
&\leq  \la \text{-} \underset{x\in \Om_\e (u)}{\ess\sup}\, \bigg[ \int_0^1 \big(\p\F(\cdot,x)\big)_{u}(\phi) \mathrm d \la \bigg] + \frac{\de}{t}
 \\
 &\ \ \ + \sup_{w\in \mX : \|w-u\| \leq t\|\phi\|} \! \Bigg( \! \la \text{-} \underset{x\in \Om_\e(u)}{\ess\sup}\, \bigg|  \big(\p\F(\cdot,x)\big)_{w}(\phi) -\big(\p\F(\cdot,x)\big)_{u}(\phi) \bigg|\Bigg)
 \\
 &\leq  \la \text{-} \underset{x\in \Om_\e(u)}{\ess\sup}\, \big(\p\F(\cdot,x)\big)_{u}(\phi) + \frac{\de}{t}
  \\
 &\ \ \ + \sup_{w\in \mX : \|w-u\| \leq t\|\phi\|} \! \Bigg( \! \la \text{-} \underset{x\in \Om}{\ess\sup}\, \bigg|  \big(\p\F(\cdot,x)\big)_{w}(\phi) -\big(\p\F(\cdot,x)\big)_{u}(\phi) \bigg|\Bigg).
\end{split}
\]
By assumption \eqref{1.15}, there exists an increasing modulus of continuity $\om_{u,\phi} \in \C(0,\infty)$ with $\om_{u,\phi}(0^+)=0$ such that 
\[
\begin{split}
\frac{f(u+t\phi) -f(u)}{t} &\leq  \la \text{-} \underset{x\in \Om_\e(u)}{\ess\sup}\, \big(\p\F(\cdot,x)\big)_{u}(\phi) + \frac{\de}{t} + \om_{u,\phi} \big(t\| \phi\| \big).
\end{split}
\]
By \eqref{3.4}, we may let $\de\to0^+$ and subsequently $t\to0^+$, to deduce
\[
\begin{split}
\limsup_{t\to0^+} \frac{f(u+t\phi) -f(u)}{t} &\leq   \la \text{-} \underset{x\in \Om_\e(u)}{\ess\sup}\, \big(\p\F(\cdot,x)\big)_{u}(\phi) ,
\end{split}
\]
for any $\e>0$. By finally letting $\e\to0^+$, we see that Lemma \ref{Lemma7} has been established. \qed
\ms

The proof of Theorem \ref{Theorem2} is complete, as it is a consequence of Lemmas \ref{Lemma6} and  \ref{Lemma7}. \qed
\ms

An inspection of the arguments in the proofs of Lemmas \ref{Lemma6} and  \ref{Lemma7} yields that we have additionally obtained the following a priori bounds:

\begin{corollary}[Estimates on the difference quotients of the $\L^\infty$ envelope] \label{Corollary19A} In the setting of Theorem \ref{Theorem2} and under the same assumptions, additionally to \eqref{1.13} we also have the estimates 
\[
\left\{ \ \ \ 
\begin{split}
\frac{f(u+t\phi) - f(u)}{t} &\geq    \la \text{\!-} \underset{x\in \Om_\e(u)}{\ess\sup}\, \big(\p\F(\cdot,x)\big)_{u}(\phi)   -\frac{\e}{t} - \om_{u,\phi} \big( t\| \phi \| \big),
\\
\frac{f(u+t\phi) -f(u)}{t} &\leq  \la \text{\!-} \underset{x\in \Om_\epsilon (u)}{\ess\sup}\, \big(\p\F(\cdot,x)\big)_{u}(\phi) + \om_{u,\phi} \big(t\| \phi\| \big),
\end{split}
\right. \ \ \ \ \ 
\]
for any $\e,\epsilon,t>0$ with $\epsilon \geq 4\om_{u} \big(t\| \phi\| \big)$. Here $\om_{u,\phi},\om_{u}  \in \C(0,\infty)$ are the moduli of continuity from assumptions \eqref{1.15}-\eqref{1.16}.
\end{corollary}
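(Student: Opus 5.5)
The plan is to read off both estimates of Corollary \ref{Corollary19A} directly from the chains of inequalities in the proofs of Lemmas \ref{Lemma6} and \ref{Lemma7}, stopping just before the limits are taken. No new argument is required: the work consists of fixing the parameters and checking that every step holds for \emph{fixed} $t,\e,\epsilon>0$.

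For the lower bound, I fix $u,\phi\in\mX$, $t>0$ and $\e>0$, and follow the proof of Lemma \ref{Lemma6} verbatim.
\begin{enumerate}
\item For $\la$-a.e.\ $x\in\Om_\e(u)$, the definition \eqref{1.14} together with the trivial bound $\F(u+t\phi,x)\le f(u+t\phi)$ gives $f(u+t\phi)-f(u)\ge \F(u+t\phi,x)-\F(u,x)-\e$.
\item Applying the fundamental theorem of Calculus along the segment $[u,u+t\phi]$ and using \eqref{1.15} on the ball of radius $t\|\phi\|$ yields
\[
\frac{f(u+t\phi)-f(u)}{t}\ge \big(\p\F(\cdot,x)\big)_u(\phi)-\frac{\e}{t}-\om_{u,\phi}(t\|\phi\|).
\]
\item Taking the $\la$-essential supremum over $x\in\Om_\e(u)$ gives the first estimate exactly. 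No constraint linking $\e$ and $t$ is needed here.
\end{enumerate}

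For the upper bound, I fix $t>0$ and $\epsilon\ge 4\om_u(t\|\phi\|)$, and follow the proof of Lemma \ref{Lemma7} with $\de\in(0,2\om_u(t\|\phi\|)]$.
\begin{enumerate}
\item The same chain there gives
\[
\frac{f(u+t\phi)-f(u)}{t}\le \la\text{-}\underset{\Om_\epsilon(u)}{\ess\sup}\,\big(\p\F(\cdot,x)\big)_u(\phi)+\frac{\de}{t}+\om_{u,\phi}(t\|\phi\|).
\]
This uses the inclusion $\Om_\de(u+t\phi)\sub\Om_\epsilon(u)$ from Claim \ref{Claim8} and \eqref{3.4}, and the hypothesis $\epsilon\ge4\om_u(t\|\phi\|)$ is exactly what makes that inclusion available.
\item The left-hand side and the first and third terms on the right do not depend on $\de$. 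Letting $\de\to0^+$ with $t,\epsilon$ held fixed therefore removes the term $\de/t$ and gives the second estimate.
\end{enumerate}

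The main point requiring care is the degenerate case $\om_u(t\|\phi\|)=0$. There \eqref{3.4} only permits $\de\le 0$, so no admissible positive $\de$ exists. In that case \eqref{3.3} shows that $\F(u+t\phi,\cdot)=\F(u,\cdot)$ $\la$-a.e., so $f(u+t\phi)=f(u)$. By the inclusion of Claim \ref{Claim8}, any small $\de>0$ satisfies $\Om_\de(u+t\phi)\sub\Om_{\de}(u)\sub\Om_\epsilon(u)$ once $\de\le\epsilon$, so the argument above still runs and the $\de\to0^+$ limit goes through.

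Two routine checks remain:
\begin{itemize}
\item the $\la$-essential suprema are finite, since by \eqref{1.8} $\F(u,\cdot)\in\L^\infty$, and for the derivative term this follows from \eqref{1.15} combined with the mean value identity;
\item the set on which the pointwise inequalities fail is $\la$-null, uniformly in the finitely many parameters involved.
\end{itemize}
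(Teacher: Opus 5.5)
Your proposal is correct and is the paper's own argument: the corollary is read off from the chains of inequalities in the proofs of Lemmas \ref{Lemma6} and \ref{Lemma7} before any limit is taken, with $\de\to0^+$ at fixed $t,\epsilon$ for the upper bound. Your separate treatment of the degenerate case $\om_u(t\|\phi\|)=0$, where \eqref{3.4} admits no positive $\de$, fixes a point the paper passes over. One small correction: the null-set bookkeeping in the fundamental-theorem step involves the continuum of parameters $s\in[0,1]$, not finitely many, and it is handled by taking rational $s$ and using the continuity of $s\mapsto(\p\F(\cdot,x))_{u+st\phi}(\phi)$ for $\la$-a.e.\ $x$.
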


Theorem \ref{Theorem2} allows to characterise the set of Gateaux differentiability points $u\in \mX$ of the generalised $\L^\infty$ envelope \eqref{1.12}. Given any $g \in \L^\infty(\Om,\la)$ and $A\in \mathfrak L$, we define the \emph{$\la$-essential oscillation of $g$ over $A$} as:
\beq
\label{3.6}
 {\la \text{-} \underset{A}{\ess\, \mathrm{osc}}\, g :=  \la \text{-} \underset{A}{\ess\sup}\, g - \la \text{-} \underset{A}{\ess\inf}\, g. } 
\eeq

\begin{corollary}[The set of Gateaux differentiability points of the generalised $\L^\infty$ envelope] In the setting of Theorem \ref{Theorem2} and under the same assumptions, the set of Gateaux differentiability points of the envelope function \eqref{1.12} is given by
\[
\mathcal D(f)=\bigcap_{\phi \in \mX} \bigg\{u\in \mX :  \lim_{\e \to 0^+} \! \bigg( \la \text{-} \underset{\Om_\e(u)}{\ess\,\mathrm{osc}}\, \big(\p\F(\cdot,x)\big)_{u}(\phi)\bigg)=0\bigg\},
\]
where the essential oscillation is given by \eqref{3.6}.
\end{corollary}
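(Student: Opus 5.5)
The plan is to use Theorem \ref{Theorem2} together with the reflection identity $(\mfD f)^-_u = -(\mfD f)^+_u(-\,\cdot)$ recorded after it. Fix $u\in\mX$. By definition, $f$ is Gateaux differentiable at $u$ exactly when $(\mfD f)^+_u(\phi)=(\mfD f)^-_u(\phi)$ for every $\phi\in\mX$, both one-sided limits existing and being finite. Theorem \ref{Theorem2} already gives existence of both semi-derivatives everywhere. So the whole task is to compute the difference $(\mfD f)^+_u(\phi)-(\mfD f)^-_u(\phi)$ and identify it with the limiting oscillation. (Whether Gateaux differentiability additionally requires linearity of $\phi\mapsto(\mfD f)^+_u(\phi)$ is a matter of convention; I follow the paper's definition, which asks only that the left and right semi-derivatives coincide for each direction.)

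First I would apply \eqref{1.13} with $-\phi$ in place of $\phi$. Linearity of $(\p\F(\cdot,x))_u$ gives $(\p\F(\cdot,x))_u(-\phi)=-(\p\F(\cdot,x))_u(\phi)$, and $\la$-$\ess\sup_A(-g)=-\la$-$\ess\inf_A g$. Hence
\[
(\mfD f)^-_u(\phi)=-(\mfD f)^+_u(-\phi)=\lim_{\e\to0^+}\bigg(\la\text{-}\underset{x\in\Om_\e(u)}{\ess\inf}\,\big(\p\F(\cdot,x)\big)_u(\phi)\bigg),
\]
which is the left-hand formula stated after Theorem \ref{Theorem2}.

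Next I would check that both limits are finite, so that they can be subtracted. Each $\Om_\e(u)$ has positive $\la$-measure: otherwise $\F(u,\cdot)<f(u)-\e$ a.e., contradicting the definition of the essential supremum. Moreover $x\mapsto(\p\F(\cdot,x))_u(\phi)$ is $\la$-essentially bounded. This should follow from the argument used in Lemmas \ref{Lemma6}--\ref{Lemma7}, where the quantities are controlled by \eqref{1.15}. If a direct bound is not available, it is enough to observe that the limits are monotone in $\e$: the ess sup decreases and the ess inf increases as $\e\downarrow0$, and both are bounded by the value at $\e=1$. Finiteness there is implicit in Theorem \ref{Theorem2} asserting a real-valued semi-differential. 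The ess sup is always at least the ess inf on a set of positive measure.

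With finiteness in hand, subtracting the two monotone limits gives
\[
(\mfD f)^+_u(\phi)-(\mfD f)^-_u(\phi)=\lim_{\e\to0^+}\bigg(\la\text{-}\underset{\Om_\e(u)}{\ess\,\mathrm{osc}}\,\big(\p\F(\cdot,x)\big)_u(\phi)\bigg)\ \geq 0,
\]
by \eqref{3.6}. So $u\in\mathcal D(f)$ if and only if this limit vanishes for every $\phi\in\mX$, which is exactly the stated intersection. The only delicate point is the finiteness and boundedness step just discussed; everything else is formal bookkeeping.
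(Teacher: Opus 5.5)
Your proposal is correct and follows the paper's own argument. You reverse the direction in \eqref{1.13} to obtain the $\ess\inf$ formula for $(\mfD f)^-_u(\phi)$, subtract, and use that Gateaux differentiability (in the paper's sense) means $(\mfD f)^+_u=(\mfD f)^-_u$. The finiteness point you flag, which the paper passes over silently, is indeed harmless: the first estimate of Corollary \ref{Corollary19A}, applied to $\pm\phi$, bounds the $\ess\sup$ from above and the $\ess\inf$ from below on every $\Om_\e(u)$, so all the quantities being subtracted are real.
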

The result above is a direct consequence of Theorem \ref{Theorem2}. It suffices only to note that by a direction reversal in \eqref{1.13} the left semi-derivative is given by
\[
({\mfD} f)^-_u(\phi) = \lim_{\e \to 0^+} \! \bigg(\la \text{-\hspace{1pt}} \underset{x\in \Om_\e(u)}{\ess\inf}\, \big(\p \F (\cdot,x)\big)_u(\phi) \bigg),
\]
and $f$ is Gateaux differentiable when $({\mfD} f)^+_u =({\mfD} f)^-_u$ (recall the discussion about semi-differentials in Section \ref{Section 2}). Now we establish that if $\F$ satisfies some additional regularity, the conclusion of Theorem \ref{Theorem2} can be strengthened and the envelope \eqref{1.12} is also differentiable in the sense of Hadamard and in the sense of Penot (also referred to as $M$-semi-differentiability, recall \eqref{2.1}-\eqref{2.2} in Section \ref{Section 2}), with the same expression \eqref{1.13}-\eqref{1.14} for the semi-differential. This is significant, because it makes the general theory of semi-differential Calculus available in this context (see e.g.\ \cite{Cl, De1, De2, De3}), for example linearity, continuity and the chain rule.

\begin{proposition}[Semi-differentiability of the envelope in the Hadamard and Penot sense]
\label{Proposition11}
In the context of Theorem \ref{Theorem2} and under the same assumptions, suppose additionally that $\F : \mX \by \Om \larrow \R$ is (locally) Lipschitz continuous in $u$, $\la$-essentially uniformly in $x\in\Om$, namely for any $R>0$, there exists $L(R)>0$ such that
\beq
\label{3.10}
\la\text{-\hspace{1pt}}\underset{\Om}{\ess\sup}\, \Big| \F(u,\cdot) -  \F(v,\cdot)\Big| \leq L(R) \| u - v \|,
\eeq
when $u,v\in \mB_R \sub \mX$. Then, the envelope \eqref{1.12} is also Hadamard semi-differentiable as well as Penot semi-differentiable (recall \eqref{2.1}-\eqref{2.2}), and the corresponding stronger semi-differentials are  given by the representation formula \eqref{1.13}-\eqref{1.14}. Moreover, for any fixed $u\in \mX$, the semi-differential $(\mfD f)^+_u : \mX \larrow \R$ is a positively $1$-homogeneous, continuous and convex functional. 
\end{proposition}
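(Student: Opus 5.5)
The plan is to reduce Penot semi-differentiability to the Gateaux statement of Theorem \ref{Theorem2} via the Lipschitz bound \eqref{3.10}, and then to read Hadamard semi-differentiability off the Penot property.

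\emph{Penot property.} Fix $u,\phi\in\mX$ and let $t\to0^+$ and $\psi\to\phi$ in $\mX$. We split
\[
\frac{f(u+t\psi)-f(u)}{t}=\frac{f(u+t\phi)-f(u)}{t}+\frac{f(u+t\psi)-f(u+t\phi)}{t}.
\]
By Theorem \ref{Theorem2}, the first term converges to the right hand side of \eqref{1.13}. For the second term, the essential supremum is $1$-Lipschitz with respect to the $\L^\infty_\la$ norm:
\[
|f(v)-f(w)|\le \la\text{-}\ess\sup_\Om|\F(v,\cdot)-\F(w,\cdot)|.
\]
Choose $R>\|u\|+2\|\phi\|+1$. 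Then for $t\le1$ and $\|\psi-\phi\|\le1$, both points $u+t\psi$ and $u+t\phi$ lie in $\mB_R$, so \eqref{3.10} gives
\[
\left|\frac{f(u+t\psi)-f(u+t\phi)}{t}\right|\le L(R)\|\psi-\phi\|\to0 .
\]
Hence the joint limit in \eqref{2.1} exists and equals \eqref{1.13}.

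\emph{Hadamard property.} Take a path $u(\cdot)$ with $u(0)=u$ and $u'(0^+)=\phi$. Set $\psi_t:=(u(t)-u)/t$, which converges to $\phi$ as $t\to0^+$. Then $f(u(t))-f(u(0))=f(u+t\psi_t)-f(u)$, and the Penot limit just established gives \eqref{2.2}.

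\emph{Structure of $(\mfD f)^+_u$.} Positive $1$-homogeneity is immediate: for $s>0$, $(\p\F(\cdot,x))_u(s\phi)=s(\p\F(\cdot,x))_u(\phi)$, and this factor passes through the $\ess\sup$ over $\Om_\e(u)$ and through the limit in $\e$. For convexity (indeed sublinearity), the $\ess\sup$ over $\Om_\e(u)$ of a sum of two functions is at most the sum of the two $\ess\sup$'s, and this inequality is preserved as $\e\to0^+$. For continuity, pass to the limit in the Lipschitz estimate: for $\phi,\psi\in\mX$,
\[
\left|\frac{f(u+t\psi)-f(u)}{t}-\frac{f(u+t\phi)-f(u)}{t}\right|\le L(R)\|\psi-\phi\|,
\]
which yields $|(\mfD f)^+_u(\psi)-(\mfD f)^+_u(\phi)|\le L(R)\|\psi-\phi\|$, where $R$ depends only on $u$ together with a bound on $\|\phi\|,\|\psi\|$. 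So $(\mfD f)^+_u$ is locally Lipschitz, hence continuous.

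The main obstacle is finiteness of the limit in \eqref{1.13}, which is needed to subtract in the homogeneity and sublinearity arguments. I would obtain it from the Lipschitz bound as well: the difference quotients are bounded by $L(R)\|\phi\|$, so the limit is a finite real number. The other point requiring care is that the choice of $R$ in \eqref{3.10} is uniform along the limit, which the restrictions $t\le1$ and $\|\psi-\phi\|\le1$ guarantee.
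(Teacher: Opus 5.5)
Your proposal is correct and follows the paper's proof: the bound \eqref{3.10} passes to $f$, the difference between the quotients along $\psi$ and along $\phi$ is at most $L(R)\|\psi-\phi\|$, so Theorem \ref{Theorem2} gives the Penot limit, and sublinearity comes from the subadditivity and positive homogeneity of the essential supremum. The differences are minor: you deduce the Hadamard case from the Penot one via $\psi_t=(u(t)-u)/t$ rather than estimating the $\mathrm o(t)$ perturbation directly, and you get continuity of $(\mfD f)^+_u$ by letting $t\to0^+$ in the Lipschitz estimate, which is more direct than the paper's double inequality and makes explicit the boundedness of $(\mfD f)^+_u$ on bounded sets that the paper's argument uses implicitly.
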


As a consequence of the general theory of Hadamard semi-differential Calculus (see e.g.\ \cite{De1, De2, De3}), we have the following properties in addition those in Proposition \ref{Proposition11}: 

\begin{remark}[Linearity and chain rule]\label{Remark12} For any fixed $u,\phi \in \mX$, $f \mapsto (\mfD f)^+_u(\phi)$ is a linear functional. Further, the chain rule holds
\[
\big(\mfD (G\circ f)\big)^+_u (\phi) = (\mfD G)^+_{f(u)}  \big(  (\mfD f)^+_u(\phi)\big),
\]
 for any Hadamard semi-differentiable map $G : \mathfrak Y \larrow \mX$, where $( \mathfrak Y,\|\cdot\|)$ is a Banach space.
\end{remark}

\BPP \ref{Proposition11}. We first show that the envelope $f$ satisfies the same Lipschitz regularity property as $\F$. Indeed, by \eqref{3.10}, for any $R>0$ and $u,v\in \mB_R \sub \mX$, we have
\[
 \F(u,\cdot)  \leq  \F(v,\cdot) + L(R) \| u - v \|,
\]
for some $L(R)>0$, which implies
\[
\la\text{-\hspace{1pt}}\underset{\Om}{\ess\sup}\,  \F(u,\cdot)  \leq  \la\text{-\hspace{1pt}}\underset{\Om}{\ess\sup}\, \F(v,\cdot) + L(R) \| u - v \|.
\]
By swapping the roles of $u,v$ in the above, \eqref{1.12} allows us to infer that 
\beq
\label{3.11}
 \big| f(u) -  f(v)\big| \leq L(R) \| u - v \|,
\eeq
any $R>0$ and $u,v\in \mB_R$. Under \eqref{3.11}, the general theory of semi-differential Calculus allows to infer all the conclusions, but we can also easily prove them directly: for any $u,\phi,\psi \in \mB_R \sub \mX$ and $t >0$, by \eqref{3.11} we have
\[
\begin{split}
\bigg| \frac{f(u+t\psi)-f(u)}{t} - \frac{f(u+t\phi)-f(u)}{t} \bigg| & \leq  \frac{\big| f(u+t\psi)-f(u+t\phi)\big| }{t} 
\\
& \leq L(R) \|\phi -\psi\|,
\end{split}
\]
from which the conclusion follows by virtue of \eqref{2.1} and Theorem \ref{Theorem2}. Similarly, consider a path $u : [0,t_0) \larrow \mX $ for $t_0>0$, satisfying $u(0)=u$ and $u'(0^+)=\phi$. Then, by \eqref{3.11} we have for $t \in (0,t_0)$ and $u,\phi \in \mB_R \sub \mX$ that
\[
\begin{split}
\bigg| \frac{f(u(t))-f(u(0))}{t} - \frac{f(u+t\phi)-f(u)}{t} \bigg| & \leq  \frac{\big| f\big(u+t\phi + \mathrm o(t)_{t\to 0^+}\big)-f(u+t\phi)\big| }{t} 
\\
& \leq L(R) \frac{\| \mathrm o(t)_{t\to 0^+} \|}{t},
\end{split}
\]
from which the conclusion follows by utilising \eqref{2.2} and Theorem \ref{Theorem2}. Since for $\la$-a.e.\ $x\in \Om$ we have $ (\p \F(\cdot, x))_u \in \mX^*$, \eqref{1.13} together with the subadditivity and homogeneity properties of the essential supremum imply that
\beq 
\label{3.12}
\left\{ \ \ 
\begin{split}
(\mfD f)^+_u(\la \phi +\mu \psi) &\leq \la (\mfD f)^+_u(\phi) + \mu (\mfD f)^+_u (\psi), 
\\
 (\mfD f)^+_u(\la \phi) &=\la (\mfD f)^+_u(\phi), 
 \end{split}
 \right.
\eeq
for any $\la,\mu \geq 0$ and $u,\phi,\psi \in \mX$. Then, \eqref{3.12} implies the desired convexity and homogeneity of the $\L^\infty$ envelope $f$. Finally, appropriate choices of $\la,\mu,\phi,\psi$ in \eqref{3.12} easily yield after a few substitutions the double inequality
\[
-\mu (\mfD f)^+_u( - \psi) \leq  (\mfD f)^+_u(\phi +\mu \psi)  - (\mfD f)^+_u(\phi)  \leq \mu (\mfD f)^+_u( \psi), 
\]
for any $\la \geq 0$ and $u,\phi,\psi \in \mX$, from which the continuity of $(\mfD f)^+_u : \mX \larrow \R$ ensues, for any fixed $u\in\mX$.   \qed
\ms


\section{Measure-theoretic argmax set and approximate supremum}
\label{Section 4}

In this section we present some new measure-theoretic concepts and results of independent interest, which in particular will allow us to recast the representation formula  \eqref{1.13}-\eqref{1.14} of the semi-differential of the envelope \eqref{1.12} from Theorem \ref{Theorem2} in the form \eqref{1.17}. The latter representation aligns with the formulation of more classical Danskin-type results (which typically require continuity in the second variable and compactness of an underlying topological space). As mentioned in the introduction, the formulation \eqref{1.17} requires some more structure on behalf of the measure space involved (Borel regular metric measure spaces with the differentiation property, recall Section \ref{Section 2}), but this additional structure is always available in cases of interest, including applications to the Calculus of Variations in $\L^\infty$, presented in Section \ref{Section 6}. We begin with an auxiliary notion.

\begin{definition}[Limiting supremum] \label{Definition13} Let $(\Om,\mathfrak L, \la)$ be a measure space and $f : \Om \larrow \R$ a $\la$-measurable function. Let also  $(\mF_\e)_{\e>0} \sub \mathfrak L$ be a family of $\mathfrak L$-measurable sets of positive $\la$-measure, ordered by inclusion, namely $\la(\mF_\e)>0$ for $\e>0$, and $\mF_{\e''} \sub \mF_{\e'}$ when $0<\e''<\e'$. We define the $\la$-$\limsup$ of $f$ over the family $(\mF_\e)_{\e>0}$ as
\[
\la\text{-}\underset{ \vv{ \ (\mF_\e)_{\e>0}\ }}{\limsup} f := \lim_{\e\to0^+}\Big( \la\text{-}\underset{\mF_\e}{\ess \sup} f\Big).
\]
\end{definition}

The above concept is a generalisation of the essential limsup (see e.g.\ \cite{C1, K3.5}) from a shrinking family of balls $(\mB_\e(x))_{\e>0}$ in a Radon measure space, to a general shrinking family of $\mathfrak L$-measurable sets. Since $\e \mapsto \la\text{-} {\ess \sup}_{\mF_\e} f$ is monotone, the above limit always exists for any directed family $(\mF_\e)_{\e>0} \sub \mathfrak L$ as above (but could be $+\infty$ unless $f$ is $\la$-essentially bounded above). Even though at first glance it seems that the limit merely equals
\[
\la\text{-}\underset{\underset{\e>0}{\bigcap}\mF_\e}{\ess \sup} f,
\]
actually, this is not the case. Unless additional hypotheses are satisfied, in general this new object depends on the choice of  $(\mF_\e)_{\e>0}$, and not only on the intersection $\cap_{\e>0} \mF_\e$, retaining some memory of the limiting process, as the next simple example shows.

\begin{example} Let $(\Om,\mathfrak L, \la)=(\R,\mL(\R),\mL^1)$, and choose $f:= \chi_{(-\infty,-1)} + 2 \chi_{(1,\infty)}$ $ \in \L^\infty(\R)$. Consider the families of sets $(\mF'_\e)_{\e>0},(\mF''_\e)_{\e>0} \sub \mL(\R)$, where we take $\mF'_\e:=[-1-\e,1]$ and $\mF''_\e:=[-1,1+\e]$. Let also $\mF:=[-1,1]$. Then, we have
\[
 \mF = \underset{\e>0}{\bigcap}\mF'_\e = \underset{\e>0}{\bigcap}\mF''_\e, \ \ \text{ but }\ \   \underset{\mF}{\ess \sup} f <  \underset{ \vv{ \ (\mF'_\e)_{\e>0}\ }}{\limsup} f < \underset{ \vv{ \ (\mF_\e'')_{\e>0}\ }}{\limsup} f,
\]
because we have ${\ess \sup}_{{\mF}}f =0$, ${\ess \sup}_{\mF'_\e} f =1$, and ${\ess \sup}_{\mF''_\e} f =2$. (As it is customary, if $\la=\mL^n$ on $\R^n$ or any subsets thereof, the (Lebesgue) measure modifier will be dropped.)
\end{example}

Next we define an appropriate measure theoretic extension of the classical notion of argmax set \eqref{1.3} of pointwise defined continuous functions on compact sets. A priori, it is not at all obvious how one can do this sensibly. For instance, the intersection of superlevel sets of the form $\{f \geq {\ess \sup}_{{\Om}}f -\e\}$ for all $\e>0$ might be empty, and by taking topological closures of these set we might include too many extraneous points beyond the set we intend to capture. It turns out that by intersecting the measure-theoretic closures of these sets, we capture the correct object. 

For the remainder of this section, we fix a \emph{Borel regular metric measure space $(\Om,d,\mathfrak L, \la)$ with the differentiation property}, abbreviated as BMMSDP (recall Section \ref{Section 2}).

\begin{definition}[Essential argmax set] \label{Definition15} Let $(\Om,d,\mathfrak L, \la)$ be a BMMSDP and let $f : \Om \larrow \R$ be a $\la$-measurable function. We define the \emph{$\la$-essential argmax set of $f$ over $\Om$} as the following intersection of measure-theoretic closures of superlevel sets (recall \eqref{2.6A}-\eqref{2.7A}):
\[
\la \text{-} \Argmax \{ f : \Om \} := \bigcap_{\e>0} \overline{\Big\{f \geq \la \text{-} \underset{\Om}{\ess \sup}\, f -\e\Big\}}^\la.
\]
\end{definition}
Note that if $f$ is not $\la$-essentially bounded above, namely when $\la \text{-}{\ess \sup}_{\Om} f =\infty$, then the notion still makes sense, but $\la \text{-} \Argmax \{ f : \Om \} =\emptyset$. The following proposition lists the main properties of this new notion, and justifies its name as we show it coincides with the classical objects, when the necessary additional continuity and compactness properties are available. 

\begin{proposition}[Properties of the essential argmax set] \label{Proposition16}  Let $(\Om,d,\mathfrak L, \la)$ be a BMMSDP and let $f : \Om \larrow \R$ be a $\la$-measurable function, $\la$-essentially bounded above. Then:

\begin{enumerate}
\item The essential argmax set of $f$ over $\Om$ can be characterised in the following way:
\[
\la \text{-} \Argmax \{ f : \Om \} = \bigg\{ x\in \Om \ : \ \la\text{-}\mathrm{ap}\limsup_{y\to x} f(y) = \la \text{-} \underset{\Om}{\ess \sup}\, f \bigg\},
\]
and the approximate limsup above coincides with the corresponding approximate limit. In particular, if $f$ is $\la$-approximately continuous in an open neighbourhood of the set $\big\{f\geq \la\text{-}{\ess \sup}_{\Om} f -\e_0 \big\}$ for some $\e_0>0$, then the essential argmax set  is non-empty.

\item If $f \in \C(\Om)$ is continuous, then 
\[
\overline{\Big\{f \geq \la \text{-} \underset{\Om}{\ess \sup}\, f -\e\Big\}}^\la = \Big\{x\in \Om : f(x) \geq \underset{\Om}{\sup} \, f -\e\Big\},
\]
and both sets are closed. Further,
\[
\la \text{-} \Argmax \{ f : \Om \} = \Big\{ x\in \Om : f(x) = \sup_{\Om} f \Big\}.
\]
\item If $f \in \C(\Om)$ is continuous and $\Om$ is compact, then we additionally have 
\[
\la \text{-} \Argmax \{ f : \Om \} = \Argmax \{ f : \Om \},
\]
and both sets are non-empty.
\end{enumerate}
\end{proposition}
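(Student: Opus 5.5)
I would prove the three items in order, working throughout with the notation $M:=\la\text{-}\ess\sup_\Om f<\infty$ and $S_\e:=\{f\geq M-\e\}$, and using \eqref{2.6A}--\eqref{2.8A} and Remark \ref{Remark17}.

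\emph{Item (1).} Fix $x\in\Om$ and write $L(x):=\la\text{-}\mathrm{ap}\limsup_{y\to x} f(y)$. By definition, $x\in\overline{S_\e}^\la$ if and only if $\limsup_{\rho\to0}\mD_\rho(\la,S_\e,x)>0$. By \eqref{2.9A} this says precisely that $t=M-\e$ lies in the admissible set for the ap-limsup. Hence $x\in\overline{S_\e}^\la$ for all $\e>0$ if and only if $L(x)\ge M$.

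For the reverse bound, note that $\{f\ge t\}$ is $\la$-null for every $t>M$. Therefore $\mD_\rho(\la,\{f\geq t\},x)=0$ for all $\rho$, and so $L(x)\le M$ at every $x$. This gives the stated characterisation.

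To see that the ap-limsup coincides with the ap-limit at such $x$, I need to show that for each $\e>0$ the density of $\{|f-M|\geq\e\}=\{f\le M-\e\}\cup\{f\ge M+\e\}$ at $x$ tends to $0$. The second set is null. For the first set I expect the \textbf{main obstacle}, since a positive upper density of $S_{\e}$ does not by itself force the complement to have zero density. I would therefore read the coincidence claim as holding in the approximately continuous setting. In that case $f(x)=\la\text{-}\mathrm{ap}\lim f$, and the equality $L(x)=M$ gives the approximate limit $M$.

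For the non-emptiness statement, suppose $f$ is approximately continuous on an open $U\supseteq S_{\e_0}$. Since $\la(S_\e)>0$ for every $\e>0$ by the definition of ess sup, the differentiation property \eqref{2.6A} gives points $x_\e\in S_\e$ where $S_\e$ has density $1$ and $f$ is approximately continuous, so that $L(x_\e)=f(x_\e)\ge M-\e$. To produce a single point with $L(x)=M$ I would use a nested argument. The sets $S_\e\cap\{\text{density points}\}$ decrease as $\e\downarrow 0$, and I need to locate a point $x$ that is a density point of every $S_\e$. The cleanest candidate is the set $\{f=M\}$: I would show that if it is $\la$-null, then approximate continuity on $U$ contradicts $f(x_\e)\to M$. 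I regard this as delicate and would fall back on the hypothesis as giving $f(x)=L(x)$ at density points.

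\emph{Item (2).} By Remark \ref{Remark17}, $M=\sup_\Om f$. Since $f$ is continuous, $S_\e=\{f\ge\sup f-\e\}$ is closed. The first inclusion \eqref{2.8A} gives $\overline{S_\e}^\la\subseteq\overline{S_\e}=S_\e$.

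For the converse inclusion, let $f(x)\ge \sup f-\e$ and fix any $\e'>\e$. By continuity, the ball $\mB_\rho(x)$ lies in $S_{\e'}$ for small $\rho$, so $x\in\overline{S_{\e'}}^\la$. This gives only $S_\e\subseteq\bigcap_{\e'>\e}\overline{S_{\e'}}^\la$. This is enough after intersecting over $\e$, which yields $\la\text{-}\Argmax\{f:\Om\}=\bigcap_\e S_\e=\{f=\sup f\}$.

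To get the exact equality at a fixed level $\e$, I would argue directly on balls. If $x\in S_\e$, then near $x$ the function satisfies $f>f(x)-\eta$. If $f(x)>\sup f-\e$, then a whole ball lies in $S_\e$ and $x$ is a density point. If $f(x)=\sup f-\e$ exactly, then density could fail, so the honest statement is the sandwich $\{f>\sup f-\e\}\subseteq\overline{S_\e}^\la\subseteq S_\e$. This sandwich suffices for the argmax identity, and I would note this caveat.

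\emph{Item (3).} When $\Om$ is compact, the argmax identity follows directly from (2) together with $\Argmax\{f:\Om\}=\{f=\max f\}$. This set is non-empty because a continuous function on a compact set attains its maximum.
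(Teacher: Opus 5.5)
Where the proposition is true, your argument is correct and is the paper's own. In (1) you use that the admissible levels in \eqref{2.9A} are closed downwards and that $\{f\geq t\}$ is $\la$-null for $t>M$. In (2) you use $\supp(\la)=\Om$, Remark \ref{Remark17} and \eqref{2.8A}, and then intersect over $\e$. Part (3) follows from (2). The three places where you stop short are not gaps in your reasoning. They are claims that are false as stated, and the paper's proof passes over each with a step that does not hold. So you should settle them with counterexamples rather than keep looking for proofs. In particular, your planned route to non-emptiness cannot succeed.

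\emph{Approximate limit in (1).} The paper justifies this only by ``$f\leq M$ $\la$-a.e.'', which is not enough. Take $\Om=\R$, $\la=\mL^1$ and $f=\chi_{[0,\infty)}$. The density of $[0,\infty)$ at $0$ is $1/2$, so $0\in\la\text{-}\Argmax\{f:\Om\}$ and the ap-limsup at $0$ equals $1=M$. However, for every $L\in\R$ and every $\e\leq 1/2$, the set $\{|f-L|\geq\e\}$ contains $(-\infty,0)$ or $[0,\infty)$, so it has density $1/2$ at $0$. Hence no approximate limit exists at $0$. Restricting to points of approximate continuity, as you do, is the correct repair.

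\emph{Non-emptiness in (1).} This claim is false, and the paper's appeal to ``uniqueness of approximate limits'' does not produce a point. Take $\Om=(0,1)$ with Lebesgue measure and $f(x)=x$. Then $f$ is continuous, hence approximately continuous on all of $\Om$, and $M=1$. Yet $\la\text{-}\Argmax\{f:\Om\}=\{f=1\}=\emptyset$, by (2) or directly. So your sketch, which seeks a contradiction from $\la(\{f=M\})=0$ together with approximate continuity and $f(x_\e)\to M$, cannot work. Compactness does not rescue it either. On $\Om=[0,1]$, put tents of height $1-1/n$ on the intervals $[2^{-n},2^{-n}+4^{-n}]$ for $n\geq 2$, and set $f=0$ elsewhere. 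The union of these intervals has density $0$ at $0$, so $f$ is approximately continuous everywhere, but its essential argmax set is still empty.

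\emph{Fixed-$\e$ equality in (2).} From $\{f\geq t+\e\}\subseteq\overline{\{f\geq t\}}^\la\subseteq\{f\geq t\}$ for all $\e>0$, the paper concludes equality ``in particular''. This only yields your sandwich $\{f>t\}\subseteq\overline{\{f\geq t\}}^\la\subseteq\{f\geq t\}$. For a counterexample, take $f(x)=\max\{-x^2,-1-(x-5)^2\}$ on $\R$ with $\e=1$. Then $\{f\geq\sup f-1\}=[-1,1]\cup\{5\}$, while its measure-theoretic closure is $[-1,1]$. The measure-theoretic closure need not even be closed. To see this, use instead a level set such as $\{0\}\cup\bigcup_{n\geq2}[2^{-n},2^{-n}+4^{-n}]$; the point $0$ is then dropped, but the intervals accumulating at it are kept.

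In summary, your proposal proves everything in the proposition that is actually true. The correct version keeps the characterisation in (1), asserts the approximate limit only at points of approximate continuity, and asserts non-emptiness only under hypotheses such as those of (3). It also replaces the fixed-$\e$ equality and the closedness claim in (2) by your sandwich. The argmax identity in (2) and all of (3) survive unchanged.
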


\BPP \ref{Proposition16}. \emph{(1)} By Definition \ref{Definition15}, we have that $x\in \la \text{-} \Argmax \{ f : \Om \} $ if and only if
\[
\forall\, \e>0, \ \ \ x \in \overline{\{f\geq M -\e \}}^\la, \ \ \ \ M:= \la \text{-} \underset{\Om}{\ess \sup}\, f .
\]
By  \eqref{2.6A}-\eqref{2.7A}, the above is equivalent to
\[
\forall\, \e>0, \ \ \ \limsup_{\rho \to0} \, \mD_\rho\Big(\la, \{f\geq M -\e \}, x \Big) >0.
\]
We can further rewrite the above as
\[
\forall\, t<M, \ \ \ \limsup_{\rho \to0} \, \mD_\rho\big(\la, \{f \geq t \}, x \big) >0.
\]
Note also that by the definition of $M$ as the essential supremum over $\Om$, for any $t > M$ we have that $\la\big(\mB_\rho(x) \cap \{f \geq t \} \big)=0$, which implies that
\[
\forall\, t>M, \ \ \ \limsup_{\rho \to0} \, \mD_\rho\big(\la, \{f \geq t \}, x \big) =0.
\]
The above imply that $M$ is optimal, namely
\[
M =\sup\bigg\{ t\in \R \ : \ \limsup_{\rho \to0} \, \mD_\rho\big(\la, \{f \geq t \}, x \big) >0 \bigg\}.
\]
By comparing the above with \eqref{2.9A}, in view of \eqref{2.6A} and the definition of $M$, we conclude that $x\in \la \text{-} \Argmax \{ f : \Om \} $ if and only if
\[
\la\text{-}\mathrm{ap}\limsup_{y\to x} f(y) = \la \text{-} \underset{\Om}{\ess \sup}\, f.
\]
Further, the approximate limsup above can be replaced by the corresponding approximate limit because $f \leq  \la \text{-}{\ess \sup}_{\Om} f$, $\la$-a.e.\ on $\Om$. Finally, if $f$ is approximately continuous in an open neighbourhood of the set $\big\{f\geq \la\text{-}{\ess \sup}_{\Om} f -\e_0 \big\}$ for some $\e_0>0$, by definition this means that the approximate limit exists at all points thereon. Therefore, $f$ is approximately continuous on $\la \text{-} \Argmax \{ f : \Om \} $. By the uniqueness property of approximate limits, we deduce that the essential argmax set is non-empty.

\smallskip

\noi \emph{(2)} Suppose $f\in \C(\Om)$, and fix $t\in \R$. Since the set $\{ f\geq t\}$ is closed in $\Om$, by \eqref{2.8A}, it follows that
\[
\overline{\{ f\geq t\}}^\la \sub \{ f\geq t\}.
\]
Further, let $\e>0$. Again by continuity, if $x\in \{ f\geq t+\e\} \sub \{ f >t \}$, there exists a $\rho>0$ such that $\mB_\rho(x) \sub \{ f > t\}\sub \{ f\geq t\}$. This implies
\[
\la\big( \mB_\rho(x) \cap \{ f\geq t\}\big) = \la\big( \mB_\rho(x)\big),
\]
whence $\mD_\rho\big(\la, \{ f\geq t\}, x\big) =1$. This yields that $x\in \overline{\{ f\geq t\}}^\la$. In conclusion, we have shown that if $f\in \C(\Om)$, then
\[
\ \ \ \ \{ f\geq t+\e\} \sub \overline{\{ f\geq t\}}^\la \sub \{ f\geq t\} , \ \ \ \forall\, \e>0, \ \forall\, t\in \R.
\]
In particular, it follows that $\overline{\{ f\geq t\}}^\la = \{ f\geq t\}$, when $f\in \C(\Om)$. By Remark \ref{Remark17}, we further infer that
\[
\overline{\Big\{f \geq \la \text{-} \underset{\Om}{\ess \sup}\, f -\e\Big\}}^\la = \Big\{ f  \geq \underset{\Om}{\sup} \, f -\e\Big\},
\]
and therefore both sets are closed. Finally, by the above and Definition \ref{Definition15}, we also have
\[
\begin{split}
\la \text{-} \Argmax \{ f : \Om \} &= \bigcap_{\e>0} \overline{\Big\{f \geq \la \text{-} \underset{\Om}{\ess \sup}\, f -\e\Big\}}^\la
\\
& = \bigcap_{\e>0} \Big\{x\in \Om : f(x) \geq  \underset{\Om}{\sup} \, f -\e \Big\}
\\
&= \Big\{x\in \Om : f(x) = \underset{\Om}{\sup} \,f \Big\}.
\end{split}
\]
\noi \emph{(3)} Follows from Part \emph{(2)} under the assumption of compactness on $\Om$, because if $f\in \C(\Om)$ we have that $\sup_\Om f =\max_\Om f$, and therefore
\[
\Big\{ x\in \Om : f(x) = \sup_{\Om}f \Big\} = \Argmax \{f : \Om\} \neq \emptyset.
\]
The proposition has been established.     \qed
\ms

We conclude this section by introducing our last measure-theoretic notion, which is combination of the previous two given in Definitions \ref{Definition13} and  \ref{Definition15}.

\begin{definition}[Approximate supremum over an essential argmax set] \label{Definition18} Let $(\Om,d,\mathfrak L, \la)$ be a BMMSDP. Given two $\la$-measurable functions $f,F : \Om \larrow \R$, where $f$ is $\la$-essentially bounded above, we define the \emph{approximate supremum of $F$ over the essential argmax set of $f$ over $\Om$} by setting
\[
\underset{ \la \text{-} \Argmax\{f : \,\Om \}}{\textrm{ap-\!}\sup} F \, := \la\text{-}\underset{ \vv{ \ (\Om_\e)_{\e>0}\ }}{\limsup} \, F ,
\]
where the directed by inclusion family of measurable set $(\Om_\e)_{\e>0} \sub \mathfrak L$ is given by
\[
\Om_\e := \Big\{ f \geq \la \text{-} \underset{\Om}{\ess \sup} \, f -\e\Big\}, \ \ \ \e>0.
\]
\end{definition}
\begin{remark}[Reformulations] In the light of Definitions \ref{Definition13} and  \ref{Definition15}, we can rewrite Definition \ref{Definition18} directly without reference to the notion of limiting supremum, as follows:
\beq
\label{4.1}
\underset{ \la \text{-} \Argmax\{f : \,\Om \} }{\textrm{ap-\!}\sup} F  = \lim_{\e\to0^+}\bigg( \la\text{-}\underset{  \{ f \geq \la \text{-} \underset{\Om}{\ess \sup}\, f -\e  \}}{\ess \sup} F\bigg).
\eeq
Note however that we couldn't have just taken \eqref{4.1} as a standalone definition, as we needed to define rigorously what it means to have a measure-theoretic argmax set, which was the content of Definition \ref{Definition15} (and Proposition \ref{Proposition16}). Further, since the measure-theoretic closure differs at most by a nullset from the set itself, \eqref{4.1} is also equivalent to
\[
\underset{ \la \text{-} \Argmax\{f : \,\Om \} }{\textrm{ap-\!}\sup} F  = \lim_{\e\to0^+}\Bigg( \la\text{-}\underset{  \overline{\{ f \geq \la \text{-} \underset{\Om}{\ess \sup}\, f -\e  \}}^\la}{\ess \sup} F\Bigg).
\]
\end{remark}
We close this section by the claimed reformulation of the conclusion of Theorem \ref{Theorem2} by using Definitions \ref{Definition13}, \ref{Definition15}, and \ref{Definition18}.

\begin{corollary}[Reformulation of the semi-differentiability of the generalised $\L^\infty$ envelope]  \label{Corollary19} In the context of  Theorem \ref{Theorem2} and under the same assumptions, suppose also that $(\Om,d,\mathfrak L, \la)$ is a BMMSPD, namely a Borel regular metric measure space with the differentiation property. Then, the Gateaux (right) semi-differential of the envelope function
\[
f(u) = \la \text{\!-} \underset{x\in \Om}{\ess\sup}\, \F(u,x), \ \ \ f \ : \ \mX \larrow \R,
\]
can be represented as
\[
(\mfD f)^+_u(\phi) = \underset{x\in  \la \text{-} \Argmax\{\F(u,\cdot) \,:\, \Om \} }{\mathrm{ap}\text{-\!}\sup} \, \big(\p \F (\cdot,x)\big)_u(\phi) ,
\]
for any $u,\phi \in \mX$.
\end{corollary}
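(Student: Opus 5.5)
The plan is to obtain Corollary \ref{Corollary19} by combining Theorem \ref{Theorem2} with Definitions \ref{Definition13}, \ref{Definition15} and \ref{Definition18}, essentially by unwinding definitions. Fix $u,\phi \in \mX$. Set $f_u := \F(u,\cdot)$, which lies in $\L^\infty(\Om,\la)$ by \eqref{1.8}; in particular it is $\la$-measurable and $\la$-essentially bounded above, so Definition \ref{Definition18} applies to it. For the second function, take $G(x) := \big(\p\F(\cdot,x)\big)_u(\phi)$. This is defined for $\la$-a.e.\ $x$, and I would set it to $0$ on the exceptional nullset, which affects no essential supremum.

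The first point to verify is that $G$ is $\la$-measurable. I plan to write it as the pointwise a.e.\ limit of the difference quotients $m\big(\F(u+\phi/m,x)-\F(u,x)\big)$ as $m\to\infty$. Each quotient is measurable in $x$ because $\F(v,\cdot)$ is measurable for every $v$, and the limit exists a.e.\ by Fr\'echet differentiability of $\F(\cdot,x)$.

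Next I need the directed family required by Definition \ref{Definition13}: the sets $\Om_\e(u)=\{f_u \geq \la\text{-}\ess\sup_\Om f_u - \e\}$ should have positive measure and decrease as $\e\downarrow 0$. Monotonicity is immediate. Positivity follows from the definition of the essential supremum, since $\la\big(\{f_u > \ess\sup f_u - \e\}\big)>0$ for every $\e>0$. The sets $\Om_\e(u)$ are then exactly the sets $\Om_\e$ of Definition \ref{Definition18} built from $f_u$.

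With this in place, reformulation \eqref{4.1} gives
\[
\underset{\la\text{-}\Argmax\{\F(u,\cdot):\Om\}}{\mathrm{ap}\text{-}\sup} G = \lim_{\e\to0^+}\Big(\la\text{-}\underset{\Om_\e(u)}{\ess\sup}\, G\Big).
\]
By \eqref{1.13} of Theorem \ref{Theorem2}, the right-hand side equals $(\mfD f)^+_u(\phi)$, which is the claim. The limit is finite: by \eqref{1.15} applied at $v=u$, the map $x\mapsto G(x)$ is essentially bounded, at least for $w$ near $u$ in the sense needed. In any case Theorem \ref{Theorem2} already asserts that the limit equals a real semi-derivative.

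I expect the only genuinely delicate step to be the measurability of $G$ together with the positivity of $\la(\Om_\e(u))$. Both are handled as above, so the rest is bookkeeping. I will also remark that the BMMSDP structure is used only to give Definition \ref{Definition15} its meaning, not in the computation itself.
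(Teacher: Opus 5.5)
Your proposal is correct and follows the paper's route: the paper treats the corollary as an immediate restatement of \eqref{1.13}--\eqref{1.14} via Definition~\ref{Definition18} and its reformulation \eqref{4.1}, and your checks that $x\mapsto(\p\F(\cdot,x))_u(\phi)$ is $\la$-measurable and that $\la(\Om_\e(u))>0$ are details the paper leaves implicit. One small correction to a non-essential aside: \eqref{1.15} only controls differences of partial differentials, so essential boundedness of $x\mapsto(\p\F(\cdot,x))_u(\phi)$ needs \eqref{1.16} as well (write $\F(u+t\phi,x)-\F(u,x)$ as an integral of the differential along the segment and fix $t>0$); as you note yourself, finiteness plays no role in the identity anyway.
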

 
 By applying Corollary \ref{Corollary19} to the expression \eqref{1.20} representing the semi-differential of supremal functionals (established in Theorem \ref{Theorem3}), we readily obtain the claimed reformulation \eqref{1.21}, by the properties of the Lebesgue measure on $\R^n$. Further, by utilising the definition \eqref{1.22} of the total semi-differential, we obtain the reformulation \eqref{1.23} as well.


\section{Semi-differentiability of general supremal functionals}
\label{Section 6}

In this section we apply Theorem \ref{Theorem2} to the particular case of supremal functionals.

\BPT \ref{Theorem3}. Consider the general supremal functional given by \eqref{1.6}, where now $\Om \sub \R^n$ is an open set and recall the notations $\D^{\vec k}u= \big(u,\D u, \ldots, \D^k u \big)$ and 
\[
\R^{\vec M} =  \mathbb R^N \times\mathbb R^{N \by n} \times \cdots  \by \mathbb R^{N \by n^{\ot k}}_{\mathrm s}.
\]
By assumption the Carath\'eodory function $\H : \Om \by \R^{\vec M} \larrow \R$ satisfies \eqref{1.18}-\eqref{1.19}. We set
\beq
\label{6.1}
\F \ : \ \W^{k,\infty}(\Om;\R^N) \by \Om \larrow \R, \ \ \ \ \F(u,x) := \H\big(x,\D^{\vec k}u(x)\big).
\eeq
Then, the envelope of $\F$ is the supremal functional $\E_\infty(\cdot,\Om)$:
\beq
\label{6.2}
\E_\infty(u,\Om) = \underset{x\in \Om}{\ess \sup}\,  \F(u,x) ,
\eeq
where the Banach space is $\W^{k,\infty}(\Om;\R^N)$, and the measure space is $(\Om,\mL(\Om),\mL^n)$.  To conclude, it suffices to show that the assumptions \eqref{1.18}-\eqref{1.19} on $\H$ imply that $\F$ defined by \eqref{6.1} satisfies \eqref{1.8}, \eqref{1.10} and \eqref{1.11}. To this aim, by using \eqref{1.18}-\eqref{1.19}, we estimate $\F$ from \eqref{6.1} as follows:
\[
\begin{split}
\| \F(u,\cdot)\|_{\L^\infty(\Om)} & = \big\|  \H\big(\cdot,\D^{\vec k}u\big) \big\|_{\L^\infty(\Om)}
\\
& \leq \|  \H\big(\cdot, \mathbf 0\big) \|_{\L^\infty(\Om)} + \om_{ 1+ \| \D^{\vec k}u  \|_{\L^\infty(\Om)}} \Big( \| \D^{\vec k}u  \|_{\L^\infty(\Om)} \Big),
\end{split}
\]
which implies $ \F(u,\cdot) \in \L^\infty(\Om)$ for any $u\in \W^{k,\infty}(\Om;\R^N)$. Further, note that for a.e.\ $x\in \Om$, the functional $\F(\cdot,x)$ is Gateaux differentiable at any point $u\in \W^{k,\infty}(\Om;\R^N)$, and the partial differential
\beq
\label{6.3}
\p \F(\cdot, x) \ : \ \W^{k,\infty}(\Om;\R^N) \larrow \big( \W^{k,\infty}(\Om;\R^N)\big)^*
\eeq
is given by
\beq
\label{6.4}
\big(\p \F(\cdot, x)\big)_u (\phi) =  \H_{,\bfX}\big(x,\D^{\vec k}u(x)\big) : \D^{\vec k}\phi(x),
\eeq
for any $\phi \in \W^{k,\infty}(\Om;\R^N)$. We now show that under our assumptions, $\F(\cdot, x)$ is actually differentiable in the Fr\'echet sense. Fix $u \in \W^{k,\infty}(\Om;\R^N)$ and take 
\[
R:= 1+ \big\| \D^{\vec k}u\big\|_{\L^\infty(\Om)}. 
\]
Then, for a.e.\ $x\in \Om$ and any $\phi \in \mB_1 \sub \W^{k,\infty}(\Om;\R^N)$, by \eqref{1.18} we may estimate
\[
\begin{split}
\Big| \F(u+\phi,x)- &\F(u,x)-\big(\p \F(\cdot, x)\big)_u (\phi)\Big| 
\\
&= \Big| \H\big(\cdot,\D^{\vec k}u+\D^{\vec k}\phi\big)- \H\big(\cdot,\D^{\vec k}u\big)-  
\H_{,\bfX}\big(\cdot,\D^{\vec k}u\big) : \D^{\vec k}\phi\Big|(x)
\\
&= \bigg| \int_0^1\Big( \H_{,\bfX}\big(\cdot, \D^{\vec k}u+\la \D^{\vec k}\phi\big): \D^{\vec k}\phi\Big)\,\mathrm d \la   -  
\H_{,\bfX}\big(\cdot,\D^{\vec k}u\big) : \D^{\vec k}\phi \bigg|(x)
\\
&\leq  \big\| \D^{\vec k}\phi\big\|_{\L^\infty(\Om)}  \int_0^1 \Big| \H_{,\bfX}\big(\cdot, \D^{\vec k}u+\la \D^{\vec k}\phi\big) -  
\H_{,\bfX}\big(\cdot,\D^{\vec k}u\big) \Big|(x) \,\mathrm d \la  
\\
& \leq  \big\| \D^{\vec k}\phi\big\|_{\L^\infty(\Om)}  \int_0^1\om_R \big(  \la\big\| \D^{\vec k}\phi\big\|_{\L^\infty(\Om)} \big) \,\mathrm d \la   
\\
& \leq  \om_R\big(  \big\| \D^{\vec k}\phi\big\|_{\L^\infty(\Om)} \big)   \big\| \D^{\vec k}\phi\big\|_{\L^\infty(\Om)}.
\end{split}
\]
The above estimate implies that \eqref{6.3}-\eqref{6.4} is actually the Fr\'echet differential of \eqref{6.1}. We now show that $\p \F(\cdot, x) : \W^{k,\infty}(\Om;\R^N) \larrow \big( \W^{k,\infty}(\Om;\R^N)\big)^*$  is actually a (strongly) continuous operator,  for a.e.\ $x\in \Om$. For any $w,v \in \mB_1 \sub \W^{k,\infty}(\Om;\R^N)$, by the representation of the dual operator norm and \eqref{6.4}, we have (using \eqref{1.18} for $R=2$)
\[
\begin{split}
\Big\| \big(\p \F(\cdot, x)\big)_w &-  \big(\p \F(\cdot, x)\big)_v \Big\|_{(\W^{k,\infty}(\Om;\R^N))^*} 
\\
&= \sup_{\phi \in \W^{k,\infty}(\Om;\R^N) : \| \D^{\vec k}\phi \|_{\L^\infty(\Om)} \leq 1} \bigg| \Big[ \big(\p \F(\cdot, x)\big)_w -  \big(\p \F(\cdot, x)\big)_v \Big] (\phi)\bigg|
\\
&= \sup_{\phi \in \W^{k,\infty}(\Om;\R^N) : \| \D^{\vec k}\phi \|_{\L^\infty(\Om)} \leq 1}\bigg|  
\Big[ \H_{,\bfX}\big(\cdot,\D^{\vec k} w \big)- \H_{,\bfX}\big(\cdot,\D^{\vec k}v\big) \Big]: \D^{\vec k} \phi  \bigg|(x)
\\
&\leq \om_2 \Big(  \big\| \D^{\vec k}w -\D^{\vec k}v \big\|_{\L^\infty(\Om)} \Big).
\end{split}
\]
The above estimate establishes that $\F(\cdot,x) \in \C^1\big(  \W^{k,\infty}(\Om;\R^N) \big)$, for a.e.\ $x\in \Om$. Hence, \eqref{1.8} has been established. Again by the definition of the dual operator norm, the above estimate implies additionally that for any $ \phi \in \W^{k,\infty}(\Om;\R^N)$, we have
\[
\begin{split}
\underset{x\in \Om}{\ess\sup}\, \bigg| \Big[ \big(\p \F(\cdot, x)\big)_w -  \big(\p \F(\cdot, x)\big)_v \Big] (\phi)\bigg|
\leq \om_2 \Big(  \big\| \D^{\vec k}w -\D^{\vec k}v \big\|_{\L^\infty(\Om)} \Big)  \| \D^{\vec k}\phi \|_{\L^\infty(\Om)},
\end{split}
\]
for all $w,v \in \mB_1 \sub \W^{k,\infty}(\Om;\R^N)$, which yields that $\F$ as defined by \eqref{6.1} also satisfies \eqref{1.10}. Finally, again by \eqref{1.18} (for $R=2$) we have
\[
\begin{split}
\underset{x\in \Om}{\ess\sup}\,  \big| \F(w,x) - \F(v,x) \big| & = \Big\|  \H\big(\cdot,\D^{\vec k}w\big) -\H\big(\cdot,\D^{\vec k}v\big) \Big\|_{\L^\infty(\Om)}
\\
& \leq \om_2 \Big( \big\| \D^{\vec k}w - \D^{\vec k} v  \big\|_{\L^\infty(\Om)} \Big),
\end{split}
\]
for all $w,v \in \mB_1 \sub \W^{k,\infty}(\Om;\R^N)$. This implies that $\F$ defined in \eqref{6.1} satisfies \eqref{1.11} as well. Therefore, all the hypotheses of Theorem \ref{Theorem2} are met by $\F$. Consequently, its envelope \eqref{6.2}, namely the supremal functional $\E_\infty(\cdot,\Om)$ (recall also \eqref{1.6}) is Gateaux semi-differentiable everywhere on $ \W^{k,\infty}(\Om;\R^N)$, with its semi-differential given by \eqref{1.13}-\eqref{1.14}. In view of \eqref{6.1}, \eqref{6.2}, \eqref{6.3} and \eqref{6.4}, the expression of the semi-differential of $F$ given by \eqref{1.13}-\eqref{1.14} is equivalent to the desired formula \eqref{1.20}. The theorem has been established. \qed
\ms

\begin{remark}[Reformulations of the semi-differential of $\E_\infty$] \
\smallskip 

\noi (i) By utilising the results of Section \ref{Section 4}, the regularity properties of the Lebesgue measure imply that the expression of the semi-differential \eqref{1.20} of the supremal functional \eqref{1.6} given by Theorem \ref{Theorem3}, can be recast in the form \eqref{1.21}, by using the new notions of approximate supremum and essential argmax set (Definitions \ref{Definition13}, \ref{Definition15} and \ref{Definition18}). 
\smallskip 

\noi (ii) Theorem \ref{Theorem3} is clearly valid also for $\E_\infty(\cdot,\mO)$ on every open subset $\mO \subseteq \Om$. This implies that the total semi-differential operator given by \eqref{1.22}, and therefore (by taking $\mO := \{\phi \neq 0\}$ when $\phi \in \smash{\W_0^{k,\infty}(\Om;\R^N)^\varobslash}$), the representation \eqref{1.23} follows. This global notion (in the sense that it refers to $\E_\infty$ without reference to subdomains) will be essential in Section \ref{Section 7} to establish the variational characterisation of absolute minimisers of supremal functionals.
\end{remark}

Utilising Proposition \ref{Proposition11} and Remark \ref{Remark12}, we can show that under a slightly stronger assumption, the semi-differentiability can be interpreted in the stronger sense of Hadamard and Penot. As we discussed earlier, this is significant, as it allows for general results from semi-differential Calculus to be directly applicable.

\begin{proposition}[Hadamard and Penot semi-differentiability of supremal functionals] \label{Proposition22}
In the context of Theorem \ref{Theorem3} and under the same assumptions, suppose additionally that
\beq
\label{6.5}
\big| \H_{,\bfX}(\cdot, \mathbf 0)\big| \in \L^\infty(\Om).
\eeq
Then, the supremal functional $\E_\infty(\cdot, \Om)$ (recall \eqref{1.6}) is Hadamard-semi-differentiable and also Penot-semi-differentiable, with the (right) semi-differential being given again by \eqref{1.20}. Further, the general properties of Hadamard semi-differential Calculus discussed in Remark \ref{Remark12}, are valid for supremal functionals.
\end{proposition}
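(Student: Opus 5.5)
The plan is to reduce the statement to Proposition \ref{Proposition11}, applied to the function $\F(u,x)=\H\big(x,\D^{\vec k}u(x)\big)$ from \eqref{6.1}, with Banach space $\mX=\W^{k,\infty}(\Om;\R^N)$ and measure space $(\Om,\mL(\Om),\mL^n)$. The proof of Theorem \ref{Theorem3} has already verified that this $\F$ satisfies \eqref{1.8}, \eqref{1.10} and \eqref{1.11}. So the only remaining hypothesis of Proposition \ref{Proposition11} is the local Lipschitz bound \eqref{3.10}. Once it holds, Proposition \ref{Proposition11} gives Hadamard and Penot semi-differentiability with the semi-differential given by \eqref{1.13}--\eqref{1.14}. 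This is exactly \eqref{1.20}, via the identification \eqref{6.4}. The properties of Remark \ref{Remark12} then hold for supremal functionals as well.

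To prove \eqref{3.10}, fix $R>0$ and $u,v\in\mB_R\sub \W^{k,\infty}(\Om;\R^N)$. Then $\|\D^{\vec k}u\|_{\L^\infty},\|\D^{\vec k}v\|_{\L^\infty}\le R$. For a.e.\ $x$, the fundamental theorem of calculus along the segment gives
\[
\big|\F(u,x)-\F(v,x)\big|\le \big|\D^{\vec k}u(x)-\D^{\vec k}v(x)\big|\int_0^1\Big|\H_{,\bfX}\big(x,\D^{\vec k}v+s(\D^{\vec k}u-\D^{\vec k}v)\big)(x)\Big|\,\mathrm ds .
\]
The segment stays in $\mB_R\sub\R^{\vec M}$ a.e. It therefore suffices to bound $\H_{,\bfX}$ essentially uniformly on $\Om\by\mB_R$.

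This is the step where \eqref{6.5} enters. For $|\bfX|\le R$, the second line of \eqref{1.18} with $\bfX'=\mathbf 0$ gives $|\H_{,\bfX}(x,\bfX)|\le |\H_{,\bfX}(x,\mathbf 0)|+\om_R(R)$ for a.e.\ $x$. Hence
\[
L(R):=\big\|\H_{,\bfX}(\cdot,\mathbf 0)\big\|_{\L^\infty(\Om)}+\om_R(R)<\infty,
\]
and so $\ess\sup_\Om|\F(u,\cdot)-\F(v,\cdot)|\le L(R)\,\|u-v\|_{\W^{k,\infty}}$, which is \eqref{3.10}.

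The main point needing care is the null sets. The estimate \eqref{1.18} holds for a.e.\ $x$, but a priori the exceptional set depends on the pair $(\bfX'',\bfX')$. I would handle this by using continuity of $\H_{,\bfX}(x,\cdot)$ and a countable dense set of points in $\mB_R$. This yields a single null set outside which the bound holds for all $\bfX\in\mB_R$, so the composition with the jets $\D^{\vec k}u$ and $\D^{\vec k}v$ is legitimate. I would also remark that, by the same argument, \eqref{6.5} is equivalent to $|\H_{,\bfX}(\cdot,\bfX_0)|\in\L^\infty(\Om)$ for some $\bfX_0\in\R^{\vec M}$.
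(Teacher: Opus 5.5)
Your proposal is correct and follows the paper's proof. Like the paper, you verify the Lipschitz bound \eqref{3.10} for $\F$ from \eqref{6.1} by bounding $|\H_{,\bfX}|$ on $\Om\by\mB_R$ by $|\H_{,\bfX}(\cdot,\mathbf 0)|+\om_R(R)$ via \eqref{1.18} and \eqref{6.5}, and then invoke Proposition \ref{Proposition11}. Your only addition is the explicit treatment of the pair-dependent exceptional null sets through a countable dense subset of $\mB_R$, which the paper leaves implicit.
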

\ms

\BPP \ref{Proposition22}. We need to confirm that by adding \eqref{6.5} to the list of assumptions for $\H$, then the functional $\F$ defined in \eqref{6.1} satisfies \eqref{3.10}. Then, the conclusion will be a consequence of Proposition \ref{Proposition11}. By \eqref{1.18}, we may estimate
\[
\begin{split}
 \Big|  \H\big(\cdot,\bfX''\big) -   \H\big(\cdot,\bfX''\big)  \Big| &\leq \bigg(\sup_{\bar \bfX \in \mB_R} \big| \H_{,\bfX}(\cdot, \bar\bfX)\big| \bigg) \big| \bfX'' - \bfX' \big|
 \\
 &\leq \bigg(\sup_{\bar \bfX \in \mB_R} \Big[\big| \H_{,\bfX}(\cdot, \mathbf 0)\big| + \om_R(|\bar \bfX |)\Big]\bigg) \big| \bfX'' - \bfX' \big|
\\
 &\leq \Big( \big| \H_{,\bfX}(\cdot, \mathbf 0)\big| + \om_R(R)\Big) \big| \bfX'' - \bfX' \big|,
\end{split}
\]
for any $\bfX'',\bfX' \in \mB_R \sub \R^{\vec M}$ and $R>0$, with the estimate being true a.e.\ on $\Om$. By \eqref{6.1} and \eqref{6.5},  the above allows us to infer that
\[
\begin{split}
\underset{x\in \Om}{\ess\sup}\,  \big| \F(w,x) - \F(v,x) \big| & = \Big\|  \H\big(\cdot,\D^{\vec k}w\big) -\H\big(\cdot,\D^{\vec k}v\big) \Big\|_{\L^\infty(\Om)}
\\
&\leq \Big( \big\| \H_{,\bfX}(\cdot, \mathbf 0)\big\|_{\L^\infty(\Om)} + \om_R(R)\Big) \big\|  \D^{\vec k}w -\D^{\vec k}v \big\|_{\L^\infty(\Om)},
\end{split}
\]
for any $w,v \in \mB_R \sub \W^{k,\infty}(\Om;\R^N)$. Hence, $\F$ satisfies \eqref{3.10} as well, and the desired conclusion ensues as a result of Proposition \ref{Proposition11}.  \qed
\ms


\section{Variational characterisation of minimality via semi-differentials}
\label{Section 7}

In this section we establish our final main result.

\BPT \ref{Theorem5}.  \underline{(i) $\Rightarrow$ (ii)}: Let $u\in \W^{k,\infty}(\Om;\R^N)$ be given and suppose that it is an absolute minimiser of \eqref{1.6}. By Proposition \ref{Proposition20}, this is equivalent to 
\[
\E_\infty \big(u,\{\psi \neq 0\}\big)\leq\E_\infty\big(u+\psi,\{\psi \neq 0\}\big), 
\]
for all $\psi \in \W^{k,\infty}_0(\Om;\R^N)^\varobslash$. Fix such a non-zero $\psi$ and $t>0$ and set $\mO := \{\psi \neq 0\} \sub \Om$, which is an open set. Then minimality implies that the lower right semi-differential is non-negative (recall the preliminaries in Section \ref{Section 2}):
\beq
\label{7.1}
\big(\underline{\mfD} \E_\infty(\cdot, \mO) \big)^+_u(\psi)  = \liminf_{t\to0^+} \frac{  \E_\infty (u+ t \psi, \mO) - \E_\infty (u,\mO)}{t}\geq 0. 
\eeq
Since assumptions \eqref{1.18}-\eqref{1.19} are satisfied by $\H$, by Theorem \ref{Theorem3} we have that $ \E_\infty (\cdot, \mO)$ is semi-differentiable, and the right semi-differential is expressed by formula \eqref{1.20}. In view of the material in Section \ref{Section 4}, especially Corollary \ref{Corollary19}, it is also expressed by \eqref{1.21}. Therefore, \eqref{7.1} implies 
\[
\big(\mfD \E_\infty(\cdot, \mO) \big)^+_u(\psi) \geq 0, 
\]
for any $\psi \in \W^{k,\infty}_0(\Om;\R^N) ^\varobslash$. Due to our choice of $\mO$, in view of \eqref{1.22}, we have
\[
(\mfD \E_\infty)^+_u(\psi) =\big(\mfD \E_\infty(\cdot, \{\psi \neq 0\}) \big)^+_u(\psi) = \big(\mfD \E_\infty(\cdot, \mO) \big)^+_u(\psi)  \geq 0.
\]
By \eqref{1.23}, the above inequality is equivalent to
\[
\underset{\mL^n\text{-}\Argmax \{  \H(\cdot, \D^{\vec{k}}u)\, : \, \{\psi\neq0 \}\}}{\mathrm{ap}\text{-\!}\sup} \! \Big[ \H_{,\bfX} \big(\cdot, \D^{\vec{k}}u \big) \!: \! \D^{\vec{k}}\psi \Big] \geq 0,
\]
for any $\psi \in \W^{k,\infty}_0(\Om;\R^N) ^\varobslash$. The conclusion ensues.

\ms

\noi \underline{(ii) $\Rightarrow$ (i)}: Unsurprisingly, this direction is considerably more complicated. We do however provide a simple direct proof under the additional assumption of convexity of $\H(x,\cdot)$ on $\smash{\R^{\vec M}}$ after the end of the present general proof.

\smallskip

\noi {\bf Step 1.} We begin with establishing two general properties of $\H$ stemming from the level-convexity and non-degeneracy assumptions \eqref{1.24}-\eqref{1.25}.

\begin{claim} \label{Claim26} For a.e.\ $x\in \Om$, we have the implication
\[
\H_{,\bfX}(x, \mathbf Z) \! :\! \mathbf W \geq 0 \ \ \ \Longrightarrow \ \ \ \H(x, \mathbf Z) \leq \H(x, \mathbf Z+ \mathbf W),
\]
for all $\mathbf Z, \mathbf W \in \R^{\vec M}$.
\end{claim}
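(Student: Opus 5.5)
The plan is to argue pointwise and by contradiction, using only the geometry of the convex sublevel sets of $\H(x,\cdot)$. First fix a full-measure set of $x\in\Om$ on which three properties hold: $\H(x,\cdot)\in\C^1(\R^{\vec M})$ (this follows from \eqref{1.18}), the non-degeneracy \eqref{1.24}, and the level-convexity \eqref{1.25}. Fix such an $x$ and drop it from the notation, writing $\H(\mathbf Z)$ and $\mathbf P:=\H_{,\bfX}(\mathbf Z)$. Suppose $\mathbf P\!:\!\mathbf W\geq 0$ but $\H(\mathbf Z+\mathbf W)<\H(\mathbf Z)$. We will derive a contradiction.

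\emph{Step 1: $\mathbf P\neq\mathbf 0$.} Since $\H(\mathbf Z+\mathbf W)<\H(\mathbf Z)$, the point $\mathbf Z$ is not a global minimiser of $\H$. Hence $\mathbf Z\notin\Argmin\{\H:\R^{\vec M}\}$, and \eqref{1.24} gives $\mathbf P\neq\mathbf 0$.

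\emph{Step 2: a supporting hyperplane.} Consider the sublevel set $S:=\{\H<\H(\mathbf Z)\}$.
\begin{itemize}
\item $S$ is open, by continuity of $\H$.
\item $S$ is convex, by \eqref{1.25}.
\item $S$ contains $\mathbf Z+\mathbf W$.
\item $S$ does not contain $\mathbf Z$.
\end{itemize}
By Hahn--Banach separation of the point $\mathbf Z$ from the open convex set $S$, there is $\mathbf Q\neq\mathbf 0$ with $\mathbf Q\!:\!(\mathbf Y-\mathbf Z)<0$ for all $\mathbf Y\in S$. The inequality is strict precisely because $S$ is open.

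\emph{Step 3: $\mathbf Q$ is a positive multiple of $\mathbf P$.} This is the key point, and the main obstacle. Take any $\mathbf V$ with $\mathbf P\!:\!\mathbf V>0$. By the first-order Taylor expansion,
\[
\H(\mathbf Z-s\mathbf V)=\H(\mathbf Z)-s\,\mathbf P\!:\!\mathbf V+o(s)<\H(\mathbf Z)
\]
for all sufficiently small $s>0$. So $\mathbf Z-s\mathbf V\in S$, and Step 2 gives $\mathbf Q\!:\!\mathbf V>0$. Thus the open half-space $\{\mathbf P\!:\!\mathbf V>0\}$ is contained in the open half-space $\{\mathbf Q\!:\!\mathbf V>0\}$.

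We now show this inclusion forces $\mathbf Q=c\,\mathbf P$ with $c>0$. Decompose $\mathbf Q=c\mathbf P+\mathbf R$ with $\mathbf R\perp\mathbf P$.
\begin{itemize}
\item If $\mathbf R\neq\mathbf 0$, take $\mathbf V=\mathbf P-\tau\mathbf R$ with $\tau$ large. Then $\mathbf P\!:\!\mathbf V=|\mathbf P|^2>0$, while $\mathbf Q\!:\!\mathbf V=c|\mathbf P|^2-\tau|\mathbf R|^2<0$, which violates the inclusion. Hence $\mathbf R=\mathbf 0$.
\item Taking $\mathbf V=\mathbf P$ then gives $c|\mathbf P|^2>0$, so $c>0$.
\end{itemize}

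\emph{Step 4: conclusion.} Since $\mathbf Z+\mathbf W\in S$, Step 2 gives $\mathbf Q\!:\!\mathbf W<0$. By Step 3 this means $c\,\mathbf P\!:\!\mathbf W<0$, so $\mathbf P\!:\!\mathbf W<0$. This contradicts the assumption $\mathbf P\!:\!\mathbf W\geq0$, which proves the claim for a.e.\ $x$.

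Step 1 is the only place non-degeneracy enters. It is genuinely needed: without $\mathbf P\neq\mathbf 0$, Step 3 would give no information, and the points $\mathbf Z-s\mathbf V$ need not lie in $S$.
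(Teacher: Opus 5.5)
Your proof is correct and rests on the same idea as the paper's: the open convex sublevel set $\{\H(x,\cdot)<\H(x,\mathbf Z)\}$ lies in the open half-space $\{\mathbf Y : \H_{,\bfX}(x,\mathbf Z)\!:\!(\mathbf Y-\mathbf Z)<0\}$, so $\mathbf Z+\mathbf W$ cannot belong to it. The paper asserts this containment via the implicit function theorem, using the $C^1$ level set with outer normal $\H_{,\bfX}$; your Steps 2--3 instead prove it, by separating the point from the open convex set and using the Taylor argument to force the separating normal to be a positive multiple of $\H_{,\bfX}(x,\mathbf Z)$, and your contradiction framing also covers the paper's separate Argmin case.
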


\BPC \ref{Claim26}. Fix $x\in \Om$ such that $\H(x,\cdot)$ is defined, and fix also a $\mathbf W \in \R^{\vec M}$ satisfying $\H_{,\bfX}(x, \mathbf Z) \! :\! \mathbf W \geq 0$, and a point
\beq
\label{7.2}
\mathbf Z \in \R^{\vec M} \set \mathrm{Argmin} \big\{ \H(x,\cdot) : \R^{\vec M} \big\}. 
\eeq
The choice of $\mathbf Z$ in \eqref{7.2} implies that $\H(x,\mathbf Z) > \inf_{\R^{\vec M}}\H(x,\cdot) $, regardless of whether the argmin set is empty or not. Let $t\in \R$ be such that $\H(x,\mathbf Z)=t  $. By assumptions \eqref{1.24}-\eqref{1.25} and the choice of the point $\mathbf Z$, we have that
\[
\left\{ \ \ \
\begin{split}
(1)\ &\, \mathbf Z \in \big\{\H(x,\cdot)=t\big\} = \p \big\{\H(x,\cdot)<t \big\},  \phantom{\big|}
\\
(2)\ &\big\{\H(x,\cdot)<t \big\} \neq \emptyset  \text{ \& is open convex},  \phantom{\Big|}
\\
 (3)\ &\, \p \big\{\H(x,\cdot)<t \big\} \text{ is a $\C^1$ hypersurface},  \phantom{\big|}
 \\
(4)\ &\H_{,\bfX}(x, \cdot) \text{ is the outer normal vector}  \phantom{\Big|}
 \\
 &\text{field on the boundary }\p\big\{\H(x,\cdot)<t \big\} . 
  \end{split}
 \right. \ \ \
\]
The above imply that $\big\{\H(x,\cdot)<t \big\}$ is contained into the open half-space (See Figure 1):
\[
\mathfrak H_{\mathbf Z,x} : = \Big\{ \mathbf Y \in \R^{\vec M} \ : \ (\mathbf Y- \mathbf Z)\!:\! \H_{,\bfX}(x, \mathbf Z)  < 0\Big\} \sub \R^{\vec M} .
\]
\[
\underset{\text{Figure 1. The idea of the proof of Claim \ref{Claim26}.}}{\includegraphics[scale=0.2]{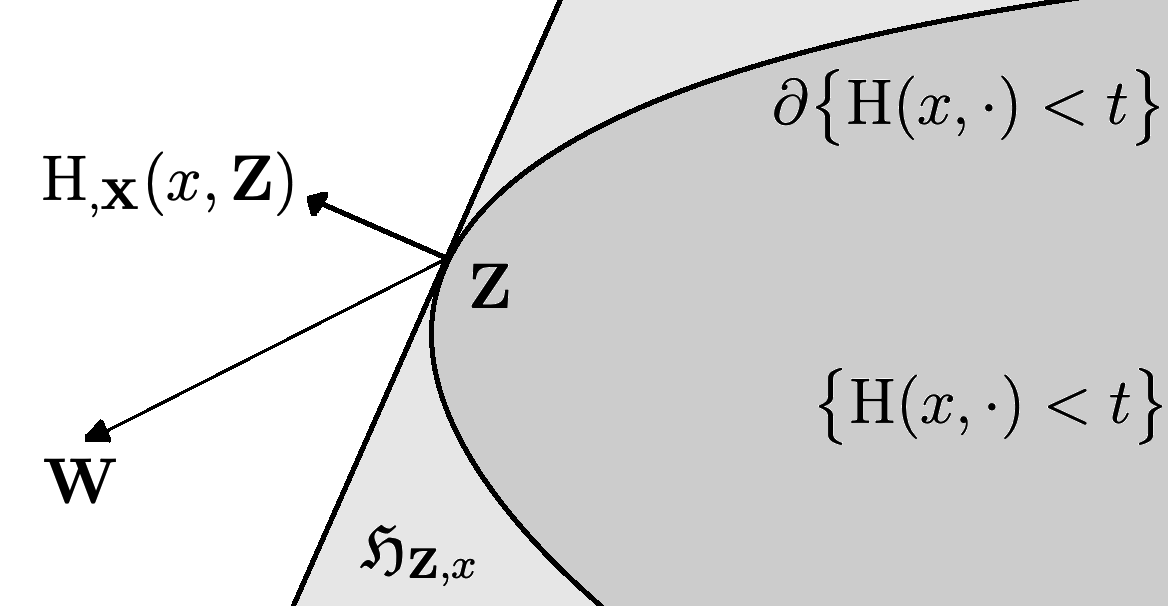}}\ \ \ \ \
\]
Thus, $\big\{\H(x,\cdot) \geq t \big\} \supseteq \R^{\vec M} \set \mathfrak H_{\mathbf Z,x}$, namely
\beq
\label{7.3}
\big\{\H(x,\cdot) \geq t \big\} \supseteq  \Big\{ \mathbf Y \in \R^{\vec M} \  : \ (\mathbf Y- \mathbf Z) \!:\! \H_{,\bfX}(x, \mathbf Z)  \geq 0\Big\}.
\eeq
Since by assumption $\mathbf W$ satisfies $\H_{,\bfX}(x, \mathbf Z) \! :\! \mathbf W \geq 0$, we have
\[
\H_{,\bfX}(x, \mathbf Z) \! :\! \big( (\mathbf W+\mathbf Z)-\mathbf Z \big) \geq 0.
\]
In view of \eqref{7.3}, the above inequality says that $\mathbf W+\mathbf Z \in \big\{\H(x,\cdot) \geq t \big\}$, which by recalling that $\H(x,\mathbf Z)=t  $, it yields
\[
\H(x, \mathbf Z) \leq \H(x, \mathbf Z+ \mathbf W).
\]
This establishes the desired implication, under assumption \eqref{7.2}. If instead we have that $\smash{\mathbf Z \in \mathrm{Argmin} \big\{ \H(x,\cdot) : \R^{\vec M} \big\}}$, then we arrive again at the same conclusion, as this membership implies that $\H(x, \mathbf Z) \leq \H(x, \mathbf U)$, for all $\smash{\mathbf U \in \R^{\vec M}}$.   \qed
\ms

Next we show an approximate less rigid version of the above result.

\begin{claim} \label{Claim27} If $x\in \Om$, $\e>0$ and $\mathbf Z, \mathbf W \in \R^{\vec M}$ are such that
\[
\H_{,\bfX}(x, \mathbf Z) \! :\! \mathbf W \geq -\e,
\]
then it follows that
\[
\H(x, \mathbf Z) \leq 
\left\{
\begin{array}{ll}
\H\bigg( x, \mathbf Z+ \mathbf W +\e\dfrac{\H_{,\bfX}(x, \mathbf Z) }{|\H_{,\bfX}(x, \mathbf Z) |^2}\bigg), & \mathbf Z \in \R^{\vec M} \set \mathrm{Argmin} \big\{ \H(x,\cdot) : \R^{\vec M} \big\},
\\
\H \big( x, \mathbf Z+ \mathbf W \big), & \mathbf Z \in \mathrm{Argmin} \big\{ \H(x,\cdot) : \R^{\vec M} \big\}.
\end{array}
\right.
\]
for all $\mathbf Z, \mathbf W \in \R^{\vec M}$.
\end{claim}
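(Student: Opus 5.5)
The plan is to reduce Claim \ref{Claim27} directly to Claim \ref{Claim26} by correcting the direction $\mathbf W$ so that the exact inequality $\H_{,\bfX}(x,\mathbf Z):\widetilde{\mathbf W}\geq 0$ holds. We fix $x\in\Om$ outside the exceptional null set where \eqref{1.24}--\eqref{1.25} and the conclusion of Claim \ref{Claim26} may fail, and fix $\e>0$ and $\mathbf Z,\mathbf W\in\R^{\vec M}$ with $\H_{,\bfX}(x,\mathbf Z):\mathbf W\geq -\e$. We then split into the two cases appearing in the statement.

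\emph{Case 1: $\mathbf Z\notin \Argmin\{\H(x,\cdot):\R^{\vec M}\}$.} By the non-degeneracy hypothesis \eqref{1.24}, $\H_{,\bfX}(x,\mathbf Z)\neq\mathbf 0$, so the vector
\[
\widetilde{\mathbf W}:=\mathbf W+\e\,\frac{\H_{,\bfX}(x,\mathbf Z)}{|\H_{,\bfX}(x,\mathbf Z)|^2}
\]
is well defined. A one-line computation gives
\[
\H_{,\bfX}(x,\mathbf Z):\widetilde{\mathbf W}=\H_{,\bfX}(x,\mathbf Z):\mathbf W+\e\geq -\e+\e=0 .
\]
Applying Claim \ref{Claim26} with $\widetilde{\mathbf W}$ in place of $\mathbf W$ then yields $\H(x,\mathbf Z)\leq \H(x,\mathbf Z+\widetilde{\mathbf W})$, which is exactly the first alternative.

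\emph{Case 2: $\mathbf Z\in \Argmin\{\H(x,\cdot):\R^{\vec M}\}$.} Here $\H(x,\mathbf Z)\leq \H(x,\mathbf U)$ for every $\mathbf U$, and in particular for $\mathbf U=\mathbf Z+\mathbf W$. The hypothesis on $\mathbf W$ is not even needed. The normalisation by $|\H_{,\bfX}|^2$ would be undefined in this case, because $\H_{,\bfX}(x,\mathbf Z)=\mathbf 0$ at an interior minimum of the $\C^1$ function $\H(x,\cdot)$, and this is why the statement splits the cases.

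The only genuine obstacle is ensuring the correction term is legitimate, namely that $\H_{,\bfX}(x,\mathbf Z)\neq\mathbf 0$ off the argmin set. This is precisely what \eqref{1.24} provides. All the geometric content, from level-convexity and the supporting half-space, is already contained in Claim \ref{Claim26}, so nothing further needs to be proved here beyond quantifying over $x$ in the common full-measure set.
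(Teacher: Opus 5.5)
Your proof is correct and matches the paper's argument: you shift $\mathbf W$ by $\e\,\H_{,\bfX}(x,\mathbf Z)/|\H_{,\bfX}(x,\mathbf Z)|^2$, which is well defined by \eqref{1.24}, so that $\H_{,\bfX}(x,\mathbf Z):\widetilde{\mathbf W}\geq 0$, and then invoke Claim \ref{Claim26}; the argmin case is immediate from the definition of a minimiser. Restricting explicitly to a full-measure set of points $x$ is harmless, and is in fact slightly more careful than the paper's statement, since Claim \ref{Claim26} and \eqref{1.24} hold only for a.e.\ $x$.
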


Note that, by our non-degeneracy assumption \eqref{1.24}, the above is well-defined.

\BPC \ref{Claim27}. Fix $x\in \Om$, $\e>0$ and $\mathbf Z, \mathbf W \in \R^{\vec M}$ as in the statement. By Claim \ref{Claim26}, it suffices to consider only the case that 
\[
\mathbf Z \in \R^{\vec M} \set \mathrm{Argmin} \big\{ \H(x,\cdot) : \R^{\vec M} \big\}.
\]
Since by assumption $\H_{,\bfX}(x, \mathbf Z) \! :\! \mathbf W \geq -\e$, which is equivalent to
\[
\H_{,\bfX}(x,\mathbf Z)\! :\! \bigg[ \mathbf W +\e\dfrac{\H_{,\bfX}(x, \mathbf Z) }{|\H_{,\bfX}(x, \mathbf Z) |^2}\bigg] \geq 0.
\]
The conclusion now follows by applying Claim \ref{Claim26} to the above inequality. \qed
\ms

\noi {\bf Step 2.} Now we turn to the main part of the proof. By assumption, we have
\[
\underset{\mL^n\text{-}\Argmax \{  \H(\cdot, \D^{\vec{k}}u)\, : \, \{\phi\neq0 \}\}}{\mathrm{ap}\text{-\!}\sup} \! \Big[ \H_{,\bfX} \big(\cdot, \D^{\vec{k}}u \big) \!: \! \D^{\vec{k}}\phi \Big] \geq 0,
\]
for any $\phi \in \W^{k,\infty}_0(\Om;\R^N)^\varobslash$. Fix such a non-zero $\phi$ and $0<\e<1$. In view of Definition \ref{Definition18} in Section \ref{Section 4}, this means that for any $0<\e<1$ we have
\beq
\label{7.4}
\underset{ \Om_{\e,\phi}(u) }{\ess \sup} \, \Big[ \H_{,\bfX} \big(\cdot, \D^{\vec{k}}u \big) \!: \! \D^{\vec{k}}\phi \Big] \geq 0,
\eeq
where
\beq
\label{7.5}
 \Om_{\e,\phi}(u) : = \Big\{ x\in \Om\, : \, \H \big(x, \D^{\vec{k}}u (x)\big) \geq \E_{\infty}\big(u,\{\phi\neq 0\}\big) -\e\Big\} \bigcap \{\phi\neq 0\}.
\eeq
By the definition of the essential supremum, there exists a measurable set $ \Theta_{\e,\phi}(u) \sub  \Om_{\e,\phi}(u)$ with positive measure $\mL^n ( \Theta_{\e,\phi}(u) )>0$, such that
\beq
\label{7.6}
\left\{ \ \ 
\begin{split}
& \H_{,\bfX} \big(\cdot, \D^{\vec{k}}u \big) \!: \! \D^{\vec{k}}\phi \geq -\e, & \text{ a.e.\ on }\Theta_{\e,\phi}(u),
 \\
& \H\big(\cdot, \D^{\vec{k}}u \big) \geq \E_{\infty}\big(u,\{\phi\neq 0\}\big) -\e, &  \text{ a.e.\ on }\Theta_{\e,\phi}(u).
\end{split}
\right.
\eeq
We now set
\beq
\label{7.7}
 \Om^\de : = \Big\{x\in \Om : \big| \H_{,\bfX}\big(x, \D^{\vec{k}}u (x)\big) \big| > \de\Big\} , \ \ \ \de \geq  0. 
\eeq
By \eqref{7.7}, we have that $(\Om_\de)_{\de>0} \sub \mL(\Om)$ and $\Om^\de \nearrow \Om^0$, as $\de\to0^+$. Further
\beq
\label{6.8A}
\Om \set\Om^0 =  \Big\{x\in \Om : \big| \H_{,\bfX}\big(x, \D^{\vec{k}}u (x)\big) \big| =0\Big\}
\eeq
and we can decompose 
\[
\Theta_{\e,\phi}(u) = \big(\Theta_{\e,\phi}(u) \set \Om^0 \big) \biguplus \big(\Theta_{\e,\phi}(u) \cap \Om^0 \big) .
\]
Since $\mL^n( \Theta_{\e,\phi}(u) )>0$, the disjointness of the union above implies that
\[
\mL^n\big(\Theta_{\e,\phi}(u) \set \Om^0 \big) + \mL^n\big(\Theta_{\e,\phi}(u) \cap \Om^0 \big)>0. 
\]
Hence, at least one of the sets $\Theta_{\e,\phi}(u) \set \Om^0$ and $\Theta_{\e,\phi}(u) \cap \Om^0$ above has non-zero measure.

\ms

\noi {\bf \underline{Case 1}:} $\mL^n\big(\Theta_{\e,\phi}(u) \set \Om^0 \big)>0$. By \eqref{1.24}, for a.e.\ $x\in \Om$ we have that
\beq
\label{6.9A}
\Big\{\mathbf Z \in \R^{\vec M}: \big| \H_{,\bfX}\big(x, \mathbf Z \big) \big| =0\Big\} \sub \mathrm{Argmin} \big\{ \H(x, \cdot )  : \R^{\vec M}\big\}.
\eeq
By \eqref{6.8A}-\eqref{6.9A}, it follows that for a.e.\ $x\in \Theta_{\e,\phi}(u) \set \Om^0$, we have that
\[
\D^{\vec k}u(x) \in \mathrm{Argmin} \big\{ \H(x, \cdot )  : \R^{\vec M}\big\}.
\]
This above therefore implies
\beq
\label{6.10A}
\H\big( \cdot, \D^{\vec{k}}u \big) \leq \H \big( \cdot, \D^{\vec{k}}u + \D^{\vec{k}}\phi \big), \ \ \text{ a.e.\ on } \Theta_{\e,\phi}(u) \set \Om^0.
\eeq
By \eqref{6.10A} and \eqref{7.6} and recalling also that $\Theta_{\e,\phi}(u) \sub \{\phi \neq0\}$, we have 
\[
\begin{split}
 \E_{\infty}\big(u,\{\phi\neq 0\}\big) &\leq \e + \underset{\Theta_{\e,\phi}(u) \set \Om^0} {\ess\sup} \H\big(\cdot, \D^{\vec{k}}u \big) 
\\
&\leq \e + \underset{\Theta_{\e,\phi}(u) \set \Om^0} {\ess\sup} \H \big( \cdot, \D^{\vec{k}}u + \D^{\vec{k}}\phi \big)
\\
&\leq \e + \underset{\{\phi \neq0\}} {\ess\sup}\, \H \big( \cdot, \D^{\vec{k}}u + \D^{\vec{k}}\phi \big)
\\
& = \e + \E_{\infty}\big(u +\phi,\{\phi\neq 0\}\big) ,
\end{split}
\]
for any $0<\e<1$, and any $\phi \in \W^{k,\infty}_0(\Om;\R^N)^\varobslash$. Therefore, in view of Proposition \ref{Proposition20}, by letting $\e\to0^+$ we see that $u$ is an absolute minimiser in this case.

\ms

\noi {\bf \underline{Case 2}:} $\mL^n\big(\Theta_{\e,\phi}(u) \cap \Om^0 \big)>0$. In view of \eqref{7.7},  $(\Om_\de)_{\de>0} \sub \mL(\Om)$ and we have $\Om^\de \nearrow \Om^0$, as $\de\to0^+$. Hence, by the continuity of the Lebesgue measure, we obtain
\[
\lim_{\de\to0^+} \mL^n\big(\Om^\de \cap \Theta_{\e,\phi}(u) \big) = \mL^n\big( \Om^0 \cap \Theta_{\e,\phi}(u) \big)>0 .
\]
This implies that there exists $\de_0>0$ such that
\beq
\label{7.8}
\mL^n\big(\Om^\de \cap \Theta_{\e,\phi}(u) \big) >0 , \ \ \text{ for all } \de \in (0,\de_0).
\eeq
By \eqref{7.6} and \eqref{7.8} we deduce that
\beq
\label{7.9}
\left\{ \ \ 
\begin{split}
& \H_{,\bfX} \big(\cdot, \D^{\vec{k}}u \big) \!: \! \D^{\vec{k}}\phi \geq -\e, & \text{ a.e.\ on }\Om^\de\cap \Theta_{\e,\phi}(u),
 \\
& \H\big(\cdot, \D^{\vec{k}}u \big) \geq \E_{\infty}\big(u,\{\phi\neq 0\}\big) -\e, &  \text{ a.e.\ on }\Om^\de\cap \Theta_{\e,\phi}(u).
\end{split}
\right.
\eeq
By \eqref{7.7} and assumption \eqref{1.24}, for any $\de \in (0,\de_0)$ we have that
\beq
\label{7.10}
 \D^{\vec{k}}u(x) \in  \R^{\vec M} \set \mathrm{Argmin} \big\{ \H(x,\cdot) : \R^{\vec M} \big\}, \ \ \text{ a.e.\ }x\in \Om^\de.
\eeq
By \eqref{7.9} and \eqref{7.10}, we see that $ \D^{\vec{k}}u(x),  \D^{\vec{k}}\phi(x) \in \R^{\vec M} $ satisfy the assumptions of Claim \ref{Claim27} for the given $\e \in (0,1)$ and for a.e.\ $x\in \Om^\de\cap \Theta_{\e,\phi}(u)$. We therefore have
\beq
\label{7.11}
 \H\big(\cdot, \D^{\vec{k}}u \big) \leq \H\bigg( \cdot ,\D^{\vec{k}}u+\D^{\vec{k}}\phi+\e \dfrac{\H_{,\bfX}(\cdot, \D^{\vec{k}}u) }{|\H_{,\bfX}(\cdot, \D^{\vec{k}}u) |^2}\bigg), \ \ \text{ a.e.\ on } \Om^\de\cap \Theta_{\e,\phi}(u).
\eeq
Additionally, by \eqref{7.7} we have the bound
\beq
\label{7.12}
\bigg\| \dfrac{\H_{,\bfX}(\cdot, \D^{\vec{k}}u) }{|\H_{,\bfX}(\cdot, \D^{\vec{k}}u) |^2} \bigg\|_{\L^\infty(\Om_\de)} \leq \frac{1}{\de}.
\eeq
By  \eqref{7.9} and \eqref{7.11} (recall also  \eqref{7.5} and that $\Theta_{\e,\phi}(u) \sub \{\phi\neq0\}$), we infer that
\beq
\label{7.13}
\begin{split}
\E_{\infty}\big(u,\{\phi\neq 0\}\big) & \leq \e + \underset{ \Om^\de\cap \Theta_{\e,\phi}(u) }{\ess \sup} \, \H\big(\cdot, \D^{\vec{k}}u \big)
\\
& \leq \e + \underset{ \Om^\de\cap \{\phi\neq 0\} }{\ess \sup} \,   \H\bigg( \cdot ,\D^{\vec{k}}u+\D^{\vec{k}}\phi+\e \dfrac{\H_{,\bfX}(\cdot, \D^{\vec{k}}u) }{|\H_{,\bfX}(\cdot, \D^{\vec{k}}u) |^2}\bigg).
\end{split}
\eeq
By assumption \eqref{1.18} and the bound \eqref{7.12}, \eqref{7.13} yields for any $R\geq R(\de)$, where we set
\[
R(\de) :=\big\| \D^{\vec{k}}u+\D^{\vec{k}}\phi  \big\|_{\L^\infty(\{\phi\neq 0\} )} + \frac{1}{\de} ,
\]
the estimate
\beq
\label{7.14}
\begin{split}
\E_{\infty}\big(u,\{\phi\neq 0\}\big) & \leq \e + \underset{ \Om^\de\cap \{\phi\neq 0\} }{\ess \sup} \,   \H\big( \cdot ,\D^{\vec{k}}u+\D^{\vec{k}}\phi \big) + \om_R\Bigg( \! \e \bigg\| \dfrac{\H_{,\bfX}(\cdot, \D^{\vec{k}}u) }{|\H_{,\bfX}(\cdot, \D^{\vec{k}}u) |^2} \bigg\|_{\L^\infty(\Om_\de)} \Bigg)
\\
& \leq \e + \underset{ \{\phi\neq 0\} }{\ess \sup} \, \H\big( \cdot ,\D^{\vec{k}}u+\D^{\vec{k}}\phi \big) + \om_R \Big( \frac{\e}{\de} \Big)
\\
&=  \e + \E_{\infty}\big(u +\phi ,\{\phi\neq 0\}\big) + \om_R \Big( \frac{\e}{\de} \Big).
\end{split}
\eeq
By letting $\e\to0^+$ in \eqref{7.14} for $\de \in (0,\de_0)$ fixed, we deduce that
\[
\E_{\infty}\big(u,\{\phi\neq 0\}\big)  \leq \E_{\infty}\big(u +\phi ,\{\phi\neq 0\}\big) ,
\]
for any given $\phi \in \W^{k,\infty}_0(\Om;\R^N)^\varobslash$. In view of Proposition \eqref{Proposition20}, it follows that $u$ is an absolute minimiser, as desired.
\qed
\ms

\begin{example}[About level-convexity and non-degeneracy] \label{Example28} As this example shows, the cornerstone of the proof in which both assumptions \eqref{1.24}-\eqref{1.25} are used essentially, Claim \ref{Claim26}, fails if \eqref{1.24} is violated even at a single point outside the argmin set. Let $\H :\R \larrow [0,\infty)$ be any function that satisfies $\H(X)>\H(0)=0$ for $X\neq 0$, with $\H$ strictly decreasing on $(-\infty,0]$ and  strictly increasing on $[0,\infty)$, but such that $\H'(X_0)=0$ at sone $X_0>0$. Then $\H$ is level convex on $\R$ and satisfies $\H'(X_0)W=0$ for all $W\in\R$, but 
\[
\text{$\H(X_0) > \H(X_0 +W)$, \ \ for\ \ $W:= -X_0$}.
\]
\end{example}

\noi {\bf A special proof of Theorem \ref{Theorem5} (ii) $\Rightarrow$ (i) }[\emph{Under additional convexity assumptions on $\H$}].

\smallskip

\noi In the context of Theorem \ref{Theorem5}  and under the same assumptions, suppose additionally  that $\H(x,\cdot)$ is convex on $\smash{\R^{\vec M}}$, for a.e.\ $x\in \Om$. In this case we can fashion out a simple direct proof of the sufficiency direction (ii) $\Rightarrow$ (i). To this aim, suppose that
\[
\underset{\mL^n\text{-}\Argmax \{  \H(\cdot, \D^{\vec{k}}u)\, : \, \{\psi\neq0 \}\}}{\mathrm{ap}\text{-\!}\sup} \! \Big[ \H_{,\bfX} \big(\cdot, \D^{\vec{k}}u \big) \!: \! \D^{\vec{k}}\psi \Big] \geq 0,
\]
for any $\psi \in \W^{k,\infty}_0(\Om;\R^N)^\varobslash$. Fix such a non-zero $\psi$; by Theorem \ref{Theorem3} and the definition of the total semi-differential \eqref{1.22}-\eqref{1.23}, this implies
\[
\lim_{t\to0^+} \frac{  \E_\infty \big(u+ t \psi, \{\psi \neq 0\} \big) - \E_\infty \big(u, \{\psi \neq 0\} \big) }{t} = \big(\mfD \E_\infty(\cdot, \{\psi \neq 0\}) \big)^+_u(\psi)  \geq 0. 
\]
Let us define $h : \R \larrow \R$ by setting
\[
h(t) : = \E_\infty \big(u+ t \psi, \{\psi \neq 0\} \big) - \E_\infty \big(u, \{\psi \neq 0\} \big).
\]
Then, $h$ is convex on $\R$, $h(0)=0$, and we also have $h'(0^+) \geq 0 \geq h'(0^-)$ (the last inequality follows by replacing $\psi$ with $-\psi$ in the semi-differential inequality). The conclusion ensues by Lemma \ref{Lemma26} that follows and Proposition \ref{Proposition20}.    \qed
\ms

\begin{lemma} \label{Lemma26} Let  $ : \R \larrow \R$ be a convex function, and suppose   $h'(0^-) \leq 0 \leq h'(0^+)$. Then, we have that $h(t)\geq h(0)$, for all $t\in \R$.
\end{lemma}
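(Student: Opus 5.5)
The plan is to use the monotonicity of difference quotients of a convex function of one real variable. Since $h$ is convex on $\R$, its one-sided derivatives $h'(0^\pm)$ exist in $\R$. For every $t\neq 0$ the slope $s(t):=\frac{h(t)-h(0)}{t}$ is nondecreasing in $t$ on $\R\set\{0\}$. I will state this three-slope inequality as the key tool, with a one-line justification: write the middle point as a convex combination of the outer two and apply the convexity inequality.

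For $t>0$, monotonicity of $s$ on $(0,\infty)$ gives $s(t)\geq \lim_{\tau\to0^+}s(\tau)=h'(0^+)\geq 0$. Multiplying by $t>0$ then yields $h(t)-h(0)\geq 0$.

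For $t<0$, monotonicity on $(-\infty,0)$ gives $s(t)\leq \lim_{\tau\to0^-}s(\tau)=h'(0^-)\leq 0$. Multiplying by $t<0$ reverses the inequality, so $h(t)-h(0)\geq 0$. The case $t=0$ is trivial.

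The only point needing care is the existence and monotone-limit identification of the one-sided derivatives. This follows from the same slope monotonicity, because $s$ is monotone and bounded on each side of $0$: by $s(-1)$ from below on $(0,\infty)$, and by $s(1)$ from above on $(-\infty,0)$. I do not expect a real obstacle here.

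As a remark on the intended application, this lemma would be applied to $h(t)=\E_\infty(u+t\psi,\{\psi\neq0\})-\E_\infty(u,\{\psi\neq0\})$. That $h$ is convex because an essential supremum of convex functions of $t$ is convex, as noted in the paper.
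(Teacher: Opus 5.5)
Your proof is correct and follows the paper's argument: the paper applies the convexity inequality with one endpoint at $0$ to get $t\,\frac{h(\lambda t)-h(0)}{\lambda t}\leq h(t)-h(0)$ and then lets $\lambda\to0^+$ separately for $t>0$ and $t<0$. This is the monotonicity of the slope $s$ on each side of $0$ that you use. Your check that the one-sided derivatives exist and are finite, via monotone bounded slopes, is a small detail the paper leaves implicit.
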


\BPL \ref{Lemma26}. Since $h$ is convex, for all $t,s\in \R$ and $\la\in(0,1)$, we have
\[
h(\la t + (1-\la)s) \leq \la h(t) + (1-\la) h(s).
\]
For $s=0$, this gives
\[
t \bigg[ \frac{h( \la t ) - h(0)}{\la t} \bigg] \leq h(t) - h(0), \ \ \text{ for all }t\neq 0.
\]
By letting $\la\to 0^+$ for $t>0$ yields $t h'(0^+) \leq h(t) - h(0)$. By letting $\la\to 0^+$ for $t<0$ yields $t h'(0^-) \leq h(t) - h(0)$. Since by assumption $h'(0^-) \leq 0 \leq h'(0^+)$, we deduce that $h(t)\geq h(0)$, for all $t\in \R$.
\qed
\ms

Now we state a variant of Theorem \ref{Theorem5} involving general classes of variations, instead of absolute minimisers. This result might be useful for extensions of the theory to constrained or other type of problems, and has essentially the same proof as Theorem \ref{Theorem5} (merely replace non-zero sets $\{\psi \neq 0\}$ of variations by arbitrary open subset $\mO\sub \Om$, as we are not requiring that these variations preserve the boundary values on $\p\mO$).

\begin{theorem}[Variational characterisation of general minimisers via the semi-differential]  \label{TheoremXX} In the context of Theorem \ref{Theorem3} and under the same assumption \eqref{1.18}-\eqref{1.19}, let $\mO \sub \Om$ be open, and let also $\mathfrak C \sub \W^{k,\infty}(\mO;\R^N)$ be a closed subspace. Given  $u \in \mathfrak C$ fixed, consider the following statements:
\begin{itemize}

\item[\emph{(i)}] The mapping $u \in \mathfrak C $ is a minimiser of the functional $\E_\infty(\cdot, \mO)$ (given by  \eqref{1.6}) within the class $\mathfrak C $, namely
\[
\E_\infty(u,\mO)\leq\E_\infty(u,\mO), \text{ for all }  v \in \mathfrak C .
\]
\item[\emph{(ii)}] The semi-differential of $\E_\infty(\cdot, \mO)$ (recall \eqref{1.20}-\eqref{1.21}) is non-negative within $\mathfrak C $ when evaluated at $u$: 
\[
\big(\mfD \E_\infty(\cdot, \mO) \big)^+_u(u-v)  = \underset{\mL^n\text{-}\Argmax \{  \H(\cdot, \D^{\vec{k}}u)\, : \, \mO\}}{\mathrm{ap}\text{-}\sup} \! \Big[ \H_{,\bfX} \big(\cdot, \D^{\vec{k}}u \big) \!: \! \D^{\vec{k}}(u-v) \Big] \geq 0,
\]
for any $v \in  \mathfrak C $.
\end{itemize}
Then, we have that \emph{(i)} implies \emph{(ii)}. Conversely, \emph{(ii)} implies \emph{(i)} if in addition the supremand $\H$ is non-degenerate and level-convex, namely \eqref{1.24}-\eqref{1.25} hold true.
\end{theorem}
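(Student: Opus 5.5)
We read statement (i) as $\E_\infty(u,\mO)\leq \E_\infty(v,\mO)$ for all $v\in\mathfrak C$. In (ii) we use the direction $v-u$ (pointing from $u$ into $\mathfrak C$), which is the natural reading; for a linear subspace this is immaterial only if one also tests $u-v$. The plan is to repeat the proof of Theorem \ref{Theorem5} with two changes: the non-zero sets $\{\psi\neq0\}$ are replaced by the fixed open set $\mO$, and the variations $\psi\in\W^{k,\infty}_0$ are replaced by the admissible directions $\phi:=v-u$. Because we only compare energies on the fixed set $\mO$, Proposition \ref{Proposition20} is not needed. Throughout, Theorem \ref{Theorem3} is applied on $\mO$ in place of $\Om$, which is legitimate since \eqref{1.18}--\eqref{1.19} restrict to $\mO$.

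\emph{(i) $\Rightarrow$ (ii).} Fix $v\in\mathfrak C$. Since $\mathfrak C$ is a linear subspace, hence convex, we have $u+t(v-u)\in\mathfrak C$ for every $t\in(0,1]$. Minimality then gives $\E_\infty(u+t(v-u),\mO)-\E_\infty(u,\mO)\geq 0$. Dividing by $t$ and letting $t\to0^+$ shows $(\underline{\mfD}\E_\infty(\cdot,\mO))^+_u(v-u)\geq0$. By Theorem \ref{Theorem3} this lower Dini derivative is the semi-differential itself, given by \eqref{1.20}. Corollary \ref{Corollary19}, applied with Lebesgue measure on $\mO$, rewrites it as the ap-sup in (ii).

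\emph{(ii) $\Rightarrow$ (i).} Fix $v\in\mathfrak C$, set $\phi:=v-u$ and take $0<\e<1$. By the reformulation \eqref{4.1}, hypothesis (ii) says that
\[
\underset{\{\H(\cdot,\D^{\vec k}u)\geq \E_\infty(u,\mO)-\e\}\cap\mO}{\ess\sup}\,\big[\H_{,\bfX}(\cdot,\D^{\vec k}u):\D^{\vec k}\phi\big]\geq0 .
\]
Hence there is a set $\Theta_\e\sub\mO$ of positive measure on which $\H_{,\bfX}(\cdot,\D^{\vec k}u):\D^{\vec k}\phi\geq-\e$ and $\H(\cdot,\D^{\vec k}u)\geq\E_\infty(u,\mO)-\e$ a.e. We then split according to whether $\Theta_\e\set\Om^0$ or $\Theta_\e\cap\Om^0$ has positive measure, with $\Om^0,\Om^\de$ as in \eqref{7.7}.
\begin{itemize}
\item In the first case, \eqref{1.24} places $\D^{\vec k}u$ in the pointwise argmin of $\H(x,\cdot)$. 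This gives $\H(\cdot,\D^{\vec k}u)\leq\H(\cdot,\D^{\vec k}v)$ a.e.\ there, and therefore $\E_\infty(u,\mO)\leq\e+\E_\infty(v,\mO)$.
\item In the second case, we pick $\de$ with $|\Om^\de\cap\Theta_\e|>0$. Claim \ref{Claim27} applies pointwise on $\Om^\de\cap\Theta_\e$, and the modulus $\om_R$ from \eqref{1.18} together with the bound \eqref{7.12} yields
\[
\E_\infty(u,\mO)\leq\e+\E_\infty(v,\mO)+\om_R(\e/\de).
\]
\end{itemize}
Here $R$ depends on $\|\D^{\vec k}v\|_{\L^\infty(\mO)}$ and on $1/\de$.

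The main obstacle is the final limit $\e\to0^+$ in the second case. The admissible threshold $\de_0=\de_0(\e)$ is produced by the set $\Theta_\e$, which itself depends on $\e$, so a priori $\e/\de$ need not tend to zero. I would first try to make $\de$ independent of $\e$. One route: if $\H_{,\bfX}(\cdot,\D^{\vec k}u)$ is not essentially bounded below on the approximate argmax sets for every $\de$, then a subsequence $\e_j$ falls into the first case, which already gives the bound. Otherwise one can fix a single $\de$ such that, for all small $\e$, the set $\Theta_\e$ can be chosen inside $\Om^\de$. To do this, apply the essential-supremum selection to the restricted function $\chi_{\Om^\de}[\H_{,\bfX}:\D^{\vec k}\phi]$, using the monotonicity of the approximate argmax sets in $\e$. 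If that fails, the fallback is to let $\e\to0$ along a sequence with $\e_j/\de(\e_j)\to0$, to be extracted by a diagonal argument from the nested family of sets $\Theta_{\e}$.
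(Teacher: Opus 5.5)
The direction (i)$\Rightarrow$(ii) in your proposal is correct. Your two cases for (ii)$\Rightarrow$(i) faithfully transplant the paper's proof of Theorem \ref{Theorem5} to the fixed set $\mO$, and that transplant is all the paper offers for this statement. The gap is exactly the one you flag, and none of your remedies closes it.

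\begin{itemize}
\item The implication in your first route fails. Case 1 needs $\H_{,\bfX}(\cdot,\D^{\vec k}u)=\mathbf 0$ on a set of positive measure. But $|\H_{,\bfX}(\cdot,\D^{\vec k}u)|$ can be positive a.e.\ and still have essential infimum $0$ on every approximate argmax set $\Om_\e$.
\item Your diagonal fallback is impossible as soon as $|\H_{,\bfX}(\cdot,\D^{\vec k}u)|\le\e$ a.e.\ on $\Om_\e$, since then every admissible $\de$ satisfies $\de<\e$.
\item Your middle route is sound only when $|\H_{,\bfX}(\cdot,\D^{\vec k}u)|$ is essentially bounded away from $0$ on some $\Om_{\e_0}$ outside its zero set. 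It does not cover the remaining case.
\end{itemize}
The paper does not supply the missing step either. In \eqref{7.14} it fixes $\de\in(0,\de_0)$ and lets $\e\to0^+$, although $\de_0$ was extracted from $\Theta_{\e,\phi}(u)$ and hence depends on $\e$.

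In fact the gap cannot be closed: (ii)$\Rightarrow$(i) is false under \eqref{1.24}--\eqref{1.25} alone. Here is a counterexample.

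Setup. Take $n=N=k=1$, $\Om=\mO=(-1,1)$ and $\H(x,X_0,X_1):=-|x|(1+X_1)-X_1^3$. For $x\neq0$ the map $X_1\mapsto\H$ is strictly decreasing with $\H_{,X_1}=-|x|-3X_1^2<0$. So $\H$ is level-convex and non-degenerate (its argmin is empty), and \eqref{1.18}--\eqref{1.19} hold with $\om_R(s)=(1+6R+3R^2)s$.

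(ii) holds. Let $u=0$ and $\mathfrak C=\W^{1,\infty}(\mO)$. Then $\E_\infty(0,\mO)=0$ and the approximate argmax sets are $[-\e,\e]$. On them $|\H_{,\bfX}(x,\mathbf 0):(\phi,\phi')|=|x|\,|\phi'(x)|\le\e\|\phi'\|_{\L^\infty}$. Hence $(\mfD\E_\infty(\cdot,\mO))^+_0(\phi)=0$ for every $\phi$.

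(i) fails. We have $\E_\infty(tx,\mO)=\ess\sup_{(-1,1)}\big(-|x|(1+t)-t^3\big)=-t^3$ for $t>-1$. So $v(x)=x$ gives $\E_\infty(v,\mO)=-1<\E_\infty(u,\mO)$. With $\mathfrak C=\W^{1,\infty}_0(\mO)$ instead, take $v'=1$ on $(-\frac14,\frac14)$ and $v'=-\frac13$ elsewhere; this gives $\E_\infty(v,\mO)=-\frac{7}{54}<0$.

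This is precisely your scenario: Case 1 never occurs, and $\de_0\le\e$ in Case 2. Replacing $|x|$ by a measurable weight $0<\eta\le1$ with essential infimum $0$ on every interval defeats (ii)$\Rightarrow$(i) in Theorem \ref{Theorem5} in the same way.

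An additional hypothesis is therefore required. One option is convexity of $\H(x,\cdot)$. Then $t\mapsto\E_\infty(u+t(v-u),\mO)$ is convex, (ii) applied to $v$ and to $2u-v$ gives the two one-sided derivative signs, and Lemma \ref{Lemma26} concludes. The other option is the quantitative lower bound on $|\H_{,\bfX}(\cdot,\D^{\vec k}u)|$ that your middle route needs.
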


\begin{remark}[Derivation of semi-differentials from the Euler-Lagrange PDEs in $\L^p$ as $p\to\infty$]

Even though a priori it is not obvious, it is indeed possible to derive the non-negativity of the semi-differential of $\E_\infty$ asserted in Theorems \ref{Theorem3} and \ref{Theorem5} from the Euler-Lagrange equations of the corresponding $\L^p$ functional as $p\to\infty$. For the sake of illustration we provide a \emph{formal argument}. Consider the $k$-th order functional
\[
{\E_p(u,\Om) := \left(\, \av_\Om  \H\big(\D^{\vec k} u \big) ^p\,\mathrm d\mL^n \! \right)^{\!\!1/p} },
\]
placed in $\W^{k,p}(\Om;\R^N)$, where we assume $\Om\sub \R^n$ is open bounded, and $\H \geq 0$. Let $u_p$ be a global minimiser for some Dirichlet data on $\p\Om$. For a fixed $\phi \in \W^{k,\infty}_0(\Om;\R^N)^\varobslash$ and $\e>0$, we set
\[
\Om_{\e,\phi}(u): = \{\phi \neq 0\} \bigcap \Big\{  \H\big(\D^{\vec k} u \big) \geq \E_{\infty}(u, \{\phi \neq 0\}) -\e \Big\}.
\]
Then, the weak form of the Euler-Lagrange equations reads
\[
\int_{\{\phi \neq 0\} }  \H\big(\D^{\vec k} u_p \big)^{p-1} \H_{,\bfX}\big(\D^{\vec k} u_p \big)   \!:\!   \D^{\vec k} \phi =0.
\]
Suppose $u_p \weak u_\infty$ in $\W^{k,q}(\Om;\R^N)$ as $p\to\infty$ for all $q\in(1,\infty)$, and $\E_{\infty}(u_\infty, \{\phi \neq 0\})>0$, which gives that $\E_{\infty}(u_p, \{\phi \neq 0\})>0$ for large $p$. From the above, by using the definition of $\Om_{\e,\phi}(u)$ for $\e>0$ small enough and rescaling, we rewrite as
\[ 
\begin{split}
0 &= \int_{\Om_{\e,\phi}(u_p)} \bigg( \frac{\H\big(\D^{\vec k} u_p \big) }{\E_{\infty}(u_p, \{\phi \neq 0\})}\bigg)^{p-1} \H_{,\bfX}\big(\D^{\vec k} u_p \big)  \!:\!   \D^{\vec k} \phi \,\mathrm d\mL^n
\\
& \ \ \  + \int_{\{\phi \neq 0\} \set \Om_{\e,\phi}(u_p)}  \bigg( \frac{\H\big(\D^{\vec k} u_p \big)}{\E_{\infty}(u_p, \{\phi \neq 0\})}\bigg)^{p-1}  \H_{,\bfX}\big(\D^{\vec k} u_p \big)  \!:\!   \D^{\vec k} \phi \,\mathrm d\mL^n.
\end{split}
\]
This gives the estimate
\[ 
\begin{split}
0 &\leq \bigg( \underset{\Om_{\e,\phi}(u_p)}{\ess\sup}  \, \H_{,\bfX}\big(\D^{\vec k} u_p \big) \!:\!   \D^{\vec k} \phi \bigg) \int_{\Om_{\e,\phi}(u_p)} \bigg(\frac{\H\big(\D^{\vec k} u_p \big)}{\E_{\infty}(u_p, \{\phi \neq 0\})}\bigg)^{p-1} \,\mathrm d\mL^n
\\
& \ \ \  +   \bigg( 1-\frac{ \e}{\E_{\infty}(u_p, \{\phi \neq 0\})} \bigg)^{p-1} \int_{\{\phi \neq 0\} } \Big|\H_{,\bfX}\big(\D^{\vec k} u_p \big)   \!:\!   \D^{\vec k} \phi \Big| \,\mathrm d\mL^n,
\end{split}
\]
from where, formally, we deduce as $p\to\infty$ that
\[
\underset{\Om_{\e,\phi}(u_\infty)}{\ess\sup}   \, \H_{,\bfX}\big(\D^{\vec k} u_\infty \big) \!:\!   \D^{\vec k} \phi  \geq 0,
\]
for $\e>0$ sufficiently small and $\phi \in \W^{k,\infty}_0(\Om;\R^N)^\varobslash$, which is equivalent to $(\mfD \E_\infty)^+_u(\phi) \geq 0$.
\end{remark}


\section{Relation of the semi-differential to the Aronsson equations}
\label{Section 8}

In this section we provide a direct proof that non-negativity of the semi-differential of supremal functionals (which by Theorem \ref{Theorem5} follows from absolute minimality, and characterises it if the functional is level-convex), implies the satisfaction of a conventional Aronsson-type equation. For simplicity, we restrict ourselves to the general scalar-valued ($k$-th order) case, as this removes the additional difficulties presented in the vector-valued case with the additional orthogonal PDE systems arising, which have discontinuous coefficients even for smooth solutions (see e.g.\ \cite{K7, K8, K9, K10, KS} and Section \ref{Section 2}). Additionally, to avoid technical nuances arsing from concepts of generalised solutions for Aronsson-type systems (see e.g.\ \cite{CKP, K4}), we will restrict our attention to the smooth case. 

\begin{theorem}[Derivation of the Aronsson-type $k$-th order PDE from the semi-differential] \label{Theorem26}
Consider the supremal functional \eqref{1.6} for $\Om \sub \R^n$ open, and suppose that $N=1$ and that 
\beq
\label{8.1}
\H \in C^1\Big(\Om \by \R \by \R^n \by \R^{n^{\ot 2}}_{\mathrm s}\cdots  \by \R^{n^{\ot k}}_{\mathrm s}\Big),
\eeq
where $n,k\in\N$, $n\geq 2$. Let $u\in \C^{k+1}(\Om)$, and suppose that the total semi-differential $( \mfD \E_\infty)^+_u$ of \eqref{1.6}  at $u$ $($given by \eqref{1.22}-\eqref{1.23}$)$ is non-negative on the subspace of compactly supported variations:
\beq
\label{8.2}
( \mfD \E_\infty)^+_u(\phi) \geq 0, \ \ \ \forall \ \phi \in \W^{k,\infty}_c(\Om).
\eeq
Then, $u$ is a solution to the following Aronsson-type $(k+1)$-th order fully nonlinear PDE:
\beq
\label{8.3}
\H_{,\bfX_k} \big(\cdot, \D^{\vec k}u \big) \!:\! \Big(\D \big( \H\big(\cdot, \D^{\vec k}u \big)\big)\Big)^{\!\ot k} =0, \ \ \ \text{ in }\Om.
\eeq
\end{theorem}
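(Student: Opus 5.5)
We argue by contradiction at a fixed point, using localised test functions whose non-zero set is a small ball and whose $k$-th derivative dominates there. Write $h:=\H(\cdot,\D^{\vec k}u)\in\C^1(\Om)$ (since $u\in\C^{k+1}$ and $\H\in\C^1$), $\mathbf A:=\H_{,\bfX_k}(\cdot,\D^{\vec k}u)$ and $p:=\D h$. Fix $x_0\in\Om$ and suppose that
\[
\mathbf A(x_0):(p(x_0))^{\ot k}\neq 0.
\]
In particular $p(x_0)\neq0$. If the left-hand side is negative we shall use $\phi$, and if it is positive we shall use $-\phi$ (for odd $k$ one may instead flip the sign of the direction $a$ below).

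For a small radius $\rho>0$ we set $\mO_\rho:=\mB_\rho(x_0)$. Since $\D h(x_0)\neq0$, the maximum of $h$ over $\overline{\mO_\rho}$ is attained on the boundary sphere near $x_0+\rho\,a$, where $a:=p(x_0)/|p(x_0)|$. For a continuous $h$ and a ball, Remark \ref{Remark17} gives $\E_\infty(u,\mO_\rho)=\sup_{\mO_\rho}h$. The approximate argmax sets $\{h\geq \sup_{\mO_\rho}h-\e\}\cap\mO_\rho$ therefore shrink, as $\e\to0^+$, to small neighbourhoods of the boundary point $x_\rho\approx x_0+\rho a$. By continuity, formula \eqref{1.23} then reduces to
\[
(\mfD\E_\infty)^+_u(\phi)=\limsup_{y\to x_\rho,\ y\in\mO_\rho}\Big[\H_{,\bfX}(y,\D^{\vec k}u):\D^{\vec k}\phi(y)\Big],
\]
provided that $\{\phi\neq0\}=\mO_\rho$. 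This is the first key step, a Danskin-type localisation at the boundary maximiser, and it uses the continuity of the integrand together with Proposition \ref{Proposition16}(2).

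Next we choose the test function $\phi(x):=\big(\rho^2-|x-x_0|^2\big)_+^{k}$, or a smoothed variant, so that $\{\phi\neq0\}=\mO_\rho$ and $\phi\in\W^{k,\infty}_c(\Om)$. Near the boundary point $x_\rho$, all derivatives of $\phi$ of order $<k$ vanish, since $\phi$ vanishes to order $k$ on the sphere. The top derivative is
\[
\D^k\phi(x_\rho)=c_k\,(x_0-x_\rho)^{\ot k}=c_k(-\rho)^k\,\nu_\rho^{\ot k},\qquad c_k=2^k k!,
\]
where $\nu_\rho$ is the outward unit normal at $x_\rho$. Hence the limsup above equals $c_k(-\rho)^k\,\mathbf A(x_\rho):\nu_\rho^{\ot k}$. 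The lower-order terms of the jet contribute nothing, because $\D^{\vv{k-1\,}}\phi\to0$ as $y\to x_\rho$. Condition \eqref{8.2}, applied to $\pm\phi$ as appropriate, yields
\[
(-1)^k\mathbf A(x_\rho):\nu_\rho^{\ot k}\geq0
\]
or the reverse inequality. We expect this to hold for both choices, since $(\mfD\E_\infty)^+_u(-\phi)$ equals the negated limit, which is single-valued here. Combining the two gives $\mathbf A(x_\rho):\nu_\rho^{\ot k}=0$. Letting $\rho\to0^+$, we have $\nu_\rho\to a$ because $\D h(x_0)\neq0$ (a first-order Taylor expansion of $h$ on the sphere locates its maximiser). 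This gives $\mathbf A(x_0):a^{\ot k}=0$, a contradiction, and multiplying by $|p(x_0)|^k$ yields \eqref{8.3}. At points where $p(x_0)=0$, equation \eqref{8.3} holds trivially.

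We expect the main obstacle to be the rigorous localisation step. One must show that the approximate argmax sets of $h$ on $\mO_\rho$ concentrate at a unique boundary point (or at least in a set whose normals converge to $a$), and that the limsup of the continuous but non-smooth-at-the-boundary quantity $\mathbf A:\D^k\phi$ is computed correctly there, where $\phi$ is only $\C^{k-1,1}$ across the sphere. If uniqueness of the maximiser fails, we would argue that every limit point of the maximisers has normal within $o(1)$ of $a$ as $\rho\to0$, since $h(x_0+\rho\nu)=h(x_0)+\rho\,p(x_0)\cdot\nu+o(\rho)$ forces $p(x_0)\cdot\nu\geq|p(x_0)|-o(1)$.
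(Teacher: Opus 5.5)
Your argument is correct, and it takes a genuinely different route from the paper.

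The paper fixes $x$ and works with the sublevel set $\mathfrak S_x=\{h<h(x)\}$. It inscribes a maximal ball in $\mathfrak S_x\cap\mB_\e(x)$ and halves its radius, so that $\overline{\mB^\e}$ meets $\p\mathfrak S_x$ only at one point $x_\e$. Hence the argmax of $h$ on $\overline{\mB^\e}$ is exactly $\{x_\e\}$, and the semi-differential reduces to a point evaluation. Testing with $\pm\phi_\e$ then gives an exact identity at $x_\e$. When $\D h(x_\e)\neq0$, the implicit function theorem aligns the ball's normal with $\D h(x_\e)$, so the PDE holds exactly at touching points $x_\e\to x$.

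You instead use concentric balls $\mB_\rho(x_0)$ at a non-critical point. You locate the maximisers only asymptotically, through $h(x_0+\rho\nu)=h(x_0)+\rho\,p(x_0)\cdot\nu+o(\rho)$. This is more elementary: there is no inscribed-ball construction, no implicit function theorem, no case split at $x_\e$, and nothing in it uses $n\geq2$. The paper's viscosity-style touching device, in exchange, yields a unique maximiser and the equation exactly at nearby points.

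Your worry about uniqueness is unnecessary. Let $\mathcal A_\rho$ be the argmax of $h$ on $\overline{\mB_\rho(x_0)}$; it lies on the sphere once $\D h\neq0$ on the closed ball. Continuity of the data then gives
\[
(\mfD\E_\infty)^+_u(\pm\phi)=\max_{y\in\mathcal A_\rho}\Big[\pm c_k(-\rho)^k\,\mathbf A(y):\nu_y^{\ot k}\Big]\geq0.
\]
Since $\nu_y\to a$ uniformly on $\mathcal A_\rho$, dividing by $c_k\rho^k$ and letting $\rho\to0^+$ yields $\pm(-1)^k\mathbf A(x_0):a^{\ot k}\geq0$ for both signs. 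So no single-valued limit is needed.

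Two clean-ups:
\begin{itemize}
\item Your initial sign rule (use $\phi$ when the quantity is negative) is correct only for even $k$, because of the factor $(-\rho)^k$. ``Flipping $a$'' is meaningless, since $a$ is fixed by $p(x_0)$. The two-sided argument above makes this moot anyway.
\item Drop the smoothed variant. The function $\phi=(\rho^2-|x-x_0|^2)_+^k$ already lies in $\W^{k,\infty}_c(\Om)$, has $\{\phi\neq0\}=\mB_\rho(x_0)$, and has a polynomial inner jet, which is exactly what the localisation uses.
\end{itemize}
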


Recall from Section \ref{Section 2} that $\bfX_j$ is the variable corresponding to the $\R^{n^{\ot j}}_{\mathrm s}$-argument for the $j$-th order derivative, and ``$(\cdot)^{\ot j}$" is the $j$-th order symmetric tensor product, $j=0,\ldots,k$. The following examples, taken from Yu \cite[p.\ 175-176]{Yu}, show that the opposite direction above is not true even if $n=k=1$ and the solution to the Aronsson ODE is $\C^\infty$ smooth. The first one assumes level convexity in $u'$ but has $x$-dependence, and the second one assumes joint convexity in $(u,u')$, and in both cases the supremand is smooth. However, under these assumptions, by Theorem \ref{Theorem5} we have that (absolute) minimisers are characterised by the non-negativity of the semi-differential.

\begin{example}[The Aronsson PDE in general not sufficient for minimality in $\L^\infty$, cf.\ Yu \cite{Yu}] \,
\label{Example36} 
\begin{enumerate}
\item Suppose $\H : (0,1) \by \R \larrow \R$ is given by $\H(x,X) := H(X) + h(x)$, where $H \in \C^\infty(\R)$ is the level-convex function
\[
H(X) := (X^2-3X)^3,
\]
and $h \in \C^\infty[0,1]$ is any function satisfying 
\[
\text{$\max_{[0,1]}h=\max_{[0,1/2]}h=0$, \ \ $\max_{[1/2,1]}h=-100$.}
\]
Then, $u \equiv 0$ is a $\C^\infty[0,1]$ smooth solution to the Aronsson equation \eqref{8.3}, which reads
\[
\H_X(\cdot,u') \big(\H(\cdot,u')\big)'=0, \ \ \text{ in } (0,1),
\]
that is not a minimiser of the supremal functional $u\mapsto \underset{(0,1)}{\ess \sup} \,\H(\cdot,u')$ in $\W^{1,\infty}_0(0,1)$.

\item Suppose $\H : \R \by \R \larrow \R$ is the convex function $\H(X_0,X_1):=|X_1|^2-X_0$. Then, $u\in \C^\infty[0,2]$ given by 
\[
 u(x):= \frac{1}{4}(x-1)^2, \ \ \ x\in (0,2),
 \]
is a smooth solution to the Aronsson equation \eqref{8.3}, which reads
\[
\H_X(u,u') \big(\H(u,u')\big)'=0, \ \ \text{ in } (0,2),
\]
whilst additionally it satisfies that $\H(u,u')\equiv 0$ everywhere on $(0,2)$, but $u$ is not a minimiser of the supremal functional $u\mapsto \underset{(0,2)}{\ess \sup} \,\H(u,u')$ in $\W^{1,\infty}_{1/4}(0,2)$.
\end{enumerate}
\end{example}

\begin{remark} \label{remark28}  (i) Special cases of Theorem \ref{Theorem26} have previously been established in  \cite{K10, DK, KP} in the first and second order case, but except for the more general context, this result also has a streamlined proof. The latter contains two crucial ideas: firstly, every arbitrary open set satisfies interior sphere conditions at a dense set of boundary points (without any regularity hypotheses), and secondly even though supremal functionals are typically non-differentiable, when restricted to the $\C^k$ subspace, one can select \emph{special subsets} $\mO \Subset \Om$ such that $\E_\infty(\cdot,\mO)$ actually \emph{is differentiable}. (Interestingly this happens only for $n\geq 2$. For $n=1$, Example \ref{Example23} shows that this is not possible and we always have non-differentiability!). The former fact is based on viscosity solution type ideas of ``touching sets", whilst the latter is not really essential for the method of proof to work, but it is interesting nonetheless, shortening the argument. A slight reworking of the method as in \cite{DK} works for all $n\in \N$, but the one-dimensional case is not especially interesting anyway.

\smallskip

\noi (ii) Assumption \eqref{8.1} implies that \eqref{1.18}-\eqref{1.19} are valid on any $\mO \Subset \Om$, and $\E_\infty(\cdot, \mO)$ is semi-differentiable in $\W^{k,\infty}(\mO)$ (in fact \eqref{6.5} is satisfied as well on $\mO$, and hence $\E_\infty$ is actually semi-differentiable in the stronger sense of Hadamard and Penot). These hypotheses are weaker than those required by Theorem \ref{Theorem3} to be semi-differentiable on $\W^{k,\infty}(\Om)$, and can be restated as that $\E_\infty$ is semi-differentiable in $\smash{\W_{\textrm{loc}}^{k,\infty}(\Om)}$.

\smallskip

\noi (iii) To derive the fully nonlinear PDE \eqref{8.3} we actually only need to evaluate $( \mfD \E_\infty)^+_u$ to $\C^k$-test functions which are supported on balls, and not on all the test functions of \eqref{8.2}. Namely, by inspecting the proof that follows, we will only need $( \mfD \E_\infty)^+_u(\phi) \geq 0$ only for the following class of functions $\phi$:
\[
\Big\{\phi \in \C^k (\overline{\mB})\cap \W_0^{k,\infty}(\mB)\, :\, \mB \Subset \Om \text{ is a ball} \Big\}.
\]
\noi (iv) Finally, by Theorem \ref{Theorem5} we have that \eqref{8.2} is equivalent to $u$ being a (localised) absolute minimiser of \eqref{1.6} on $\Om$, if additionally $\H(x,\cdot)$ is level-convex and non-degenerate.  
\end{remark}

\BPT \ref{Theorem26}. Fix $x\in \Om$ and consider the sublevel set
\beq
\label{8.4}
\mathfrak S_x := \Big\{y\in \Om\ :\  \H\big(y, \D^{\vec k}u(y) \big) <  \H\big(x, \D^{\vec k}u(x) \big) \Big\}.
\eeq
Then, $\mathfrak S_x$ is open in $\Om$.

\smallskip

\noi {\bf \underline{Case 1.}}  If $\mathfrak S_x =\emptyset$, then we have $\H\big(\cdot, \D^{\vec k}u \big) \geq  \H\big(x, \D^{\vec k}u(x)\big)$ everywhere on $\Om$, and $x$ is a global interior minimum of $\H\big(\cdot, \D^{\vec k}u \big)$. This implies that
\beq
\label{8.5}
\D \big( \H\big(\cdot, \D^{\vec k}u \big)(x) =0,
\eeq
and therefore we deduce that
\beq
\label{8.12}
\H_{,\bfX_k} \big(x,\D^{\vec k}u(x) \big) \!:\! \Big(\D \big( \H\big(\cdot, \D^{\vec k}u \big)\big)(x)\Big)^{\!\ot k} =0.
\eeq
\noi {\bf \underline{Case 2.}}  If $\mathfrak S_x \neq \emptyset$, fix $\e \in \big(0,\dist(x,\p\Om)\big)$ and consider the largest ball contained inside $\mathfrak S_x \cap \mB_\e(x)$. Let us denote its centre by $z_\e$ and its radius by $r_\e$ (See Figure 2):
\[
\mB_{r_\e}(z_\e) \sub \mathfrak S_x \cap \mB_\e(x).
\]
Then, $ \mathfrak S_x$ satisfies an interior sphere condition at any of the points that $\p \mB_{r_\e}(z_\e)$ touches $\p \mathfrak S_x $. Fix any of these points, say $x_\e$, and consider the ball with half the radius $r_\e/2$ and centre the middle of the segment $[x_\e, z_\e]$, and let us denote it by $\mB^\e$:
\[
\mB^\e := \mB_{\frac{r_\e}{2}}\Big(\frac{z_\e +x_\e}{2} \Big).
\]
\[
\underset{\text{Figure 2. The idea of the proof of Theorem \ref{Theorem26}.}}{\includegraphics[scale=0.2]{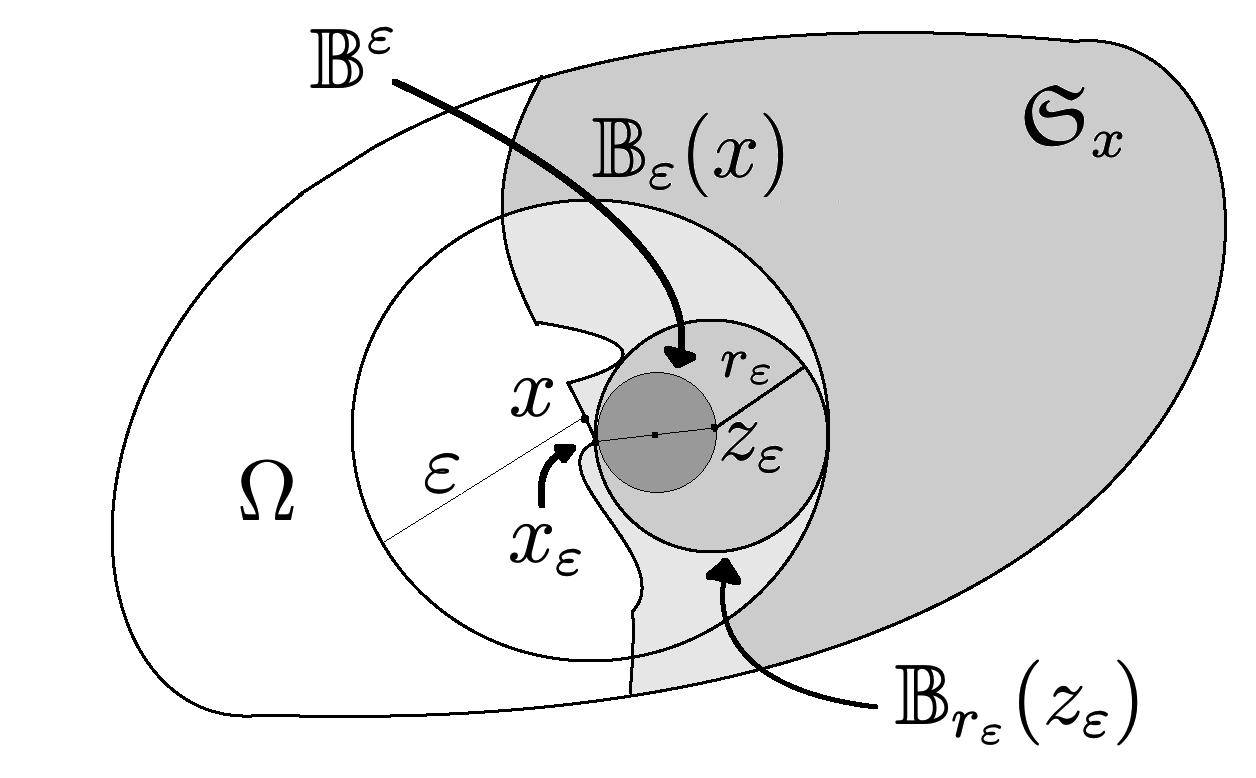}}
\]
Then, we have that $\mB^\e \sub \mathfrak S_x $ and also $(\p \mB^\e) \cap (\p \mathfrak S_x ) = \{x_\e\}$, namely the ball $\mB^\e$ is contained entirely into the sublevel set $\mathfrak S_x$, and the only intersection of their boundaries is the point $x_\e$ itself. This means that
\beq
\label{8.6}
\Argmax \big\{ \H\big(\cdot, \D^{\vec k}u \big)   :  \overline{\mB^\e}\big\} = \{x_\e\},
\eeq
as the maximum of $\H\big(\cdot, \D^{\vec k}u \big)$ over $\overline{\mB^\e}$ is attained only at $x_\e$. Additionally, $\mathfrak S_x $ still satisfies an interior sphere condition at $x_\e$ with respect to $\mB^\e$. Now we have to consider two sub-cases:

\smallskip

\noi {\bf \underline{Case 2(a).}} We have $\D \big( \H\big(\cdot, \D^{\vec k}u \big)\big)(x_\e) \neq 0$. In this eventuality, the implicit function theorem yields that $\p \mathfrak S_x $ is a $\C^1$ hypersurface locally near $x_\e$. Thus, the sphere $\p \mB_\e$ and $\p \mathfrak S_x $ share the same normal direction at $x_\e$, which implies
\beq
\label{8.7}
\D \big( \H\big(\cdot, \D^{\vec k}u \big)\big)(x_\e) = \al_\e \frac{2(x_\e-z_\e)}{r_\e}, \ \ \text{ for some } \al_\e \neq 0.
\eeq
Fix now $\phi \in \C^k (\overline{\mB^\e})\cap \W_0^{k,\infty}(\mB^\e)$. By assumption \eqref{8.2}, the regularity of $\H,u,\phi$ (recall also Theorem \ref{Theorem3} and Remark \ref{Remark17}) allows us to infer that
\beq
\label{8.8}
\max_{\Argmax \{ \H (\cdot, \D^{\vec k}u )  \, :\,  \overline{\mB^\e}\}} \H_{,\bfX} \big(\cdot, \D^{\vec k}u \big) : \D^{\vec k}\phi \geq 0.
\eeq
By \eqref{8.6} and \eqref{8.8}, since the argmax set of $\H\big(\cdot, \D^{\vec k}u \big)$ over $\mB^\e$ is the singleton set $\{x_\e\}$, by replacing $\phi$ with $-\phi$, we infer that
\beq
\label{8.9}
\H_{,\bfX} \big(x_\e, \D^{\vec k}u(x_\e) \big): \D^{\vec k}\phi(x_\e) = 0.
\eeq
Choose now any function $\ze \in \C^k[0,1]$ such that
\[
\begin{split}
&\ze^{(j)}(0)=0 \text{ for } j =1,\ldots, k,  
\\
 &\ze^{(j)}(1)=0  \text{ for } j =0,\ldots, k-1, \ \ \ \ze^{(k)}(1)=1,
\end{split}
\] 
and set 
\[
\phi_\e (x) : = \ze\bigg( \frac{2}{r_\e} \Big|   x - \frac{z_\e +x_\e}{2}  \Big| \bigg).
\]
Then, $\phi_\e \in \C^k (\overline{\mB^\e})\cap \W_0^{k,\infty}(\mB^\e)$, and additionally we have that its $k$-th order derivative on the boundary $\p\mB^\e$ is proportional to the $k$-fold tensor product of the normal vector to the boundary of $\mB^\e$. Therefore, in particular
\beq
\label{8.10}
\D^{j} \phi_\e (x_\e) = \left\{ 
\begin{array}{ll}
\be_{\e,k} \bigg( \dfrac{2(x_\e-z_\e)}{r_\e}\bigg)^{\! \ot k}, & j=k,\ \ \text{ for some } \be_{\e,k} \neq 0, \ms
\\
\mathbf 0, & j=0,\ldots,k-1.
\end{array}
\right.
\eeq
By substituting \eqref{8.7} into \eqref{8.10} we obtain
\beq
\label{8.11}
\D^{\vec k} \phi_\e (x_\e) = \bigg(\mathbf 0, \ldots, \mathbf 0, \frac{\be_{\e,k}}{(\al_\e)^k}\Big(  \D\big( \H\big(\cdot, \D^{\vec k}u\big) \big)(x_\e) \Big)^{\!\! \ot k}\bigg).
\eeq
By substituting  \eqref{8.11} into \eqref{8.9} and using the homogeneity of the equation, we deduce that
\[
\H_{,\bfX_k} \big(x_\e ,\D^{\vec k}u(x_\e) \big) \!:\! \Big(\D \big( \H\big(\cdot, \D^{\vec k}u \big)\big)(x_\e)\Big)^{\!\ot k} =0.
\]
By letting $\e\to0$, the regularity assumption \eqref{8.1} on $\H$ allows to infer that \eqref{8.12} is satisfied in this case as well.

\smallskip

\noi {\bf \underline{Case 2(b).}} We have $\D \big( \H\big(\cdot, \D^{\vec k}u \big)\big)(x_\e) =0$. By letting $\e\to0$, the regularity assumption \eqref{8.1} on $\H$ once again allows to infer that  \eqref{8.12} is satisfied in this case too.

\smallskip

\noi In conclusion, \eqref{8.12} holds at any $x\in \Om$ in all cases, which is \eqref{8.3}. The  theorem has been established.
\qed


\ms\ms

\noi{\bf Acknowledgements.} The author would like to thank Roger Moser (Bath, UK) and Simone Carano (Reading, UK),  for various fruitful scientific discussions relevant to the results presented herein.

\ms
\ms

\noi{\bf Declarations.} 

\noi $\bullet$ The author has no conflict of interest to disclose.

\noi $\bullet$ There are no data associated with this work.

\noi $\bullet$ This work contains no AI-generated content.

\ms



\bibliographystyle{amsplain}

\end{document}